\documentclass[a4paper,leqno,11pt]{amsart}

\usepackage{amssymb}
\usepackage{amsthm}
\usepackage{tikz-cd}
\usepackage{mathtools}
\usepackage{framed}
\usepackage[mathscr]{eucal}
\usepackage{fullpage}
\usepackage{hyperref}
\usepackage{enumitem}
\usepackage{wasysym}
\usepackage{xcolor}
\usepackage{scalerel}
\usepackage{comment}
\usepackage[all]{xy}
\hypersetup{colorlinks=true,citecolor=blue,urlcolor =black,linkbordercolor={1 0 0}}
\mathtoolsset{centercolon}
\title{Equivalences via twisted hyperholomorphic sheaves from transverse Lagrangian fibrations}
\author{Moritz Hartlieb}
\address{Mathematisches Institut, Universität Bonn, Endenicher Allee 60, 53115
Bonn, Germany}
\email{hartlieb@math.uni-bonn.de}

\author{Saket Shah} 
\address{Department of Mathematics, University of Michigan, Ann Arbor, MI 48109, USA}
\email{sakets@umich.edu}
\date{\today}

\newcommand{\cO}{\mathcal{O}}
\newcommand{\cA}{\mathcal{A}}

\newcommand{\cC}{\mathcal{C}}
\newcommand{\cD}{\mathcal{D}}
\newcommand{\cE}{\mathcal{E}}
\newcommand{\cF}{\mathcal{F}}
\newcommand{\cG}{\mathcal{G}}
\newcommand{\cH}{\mathcal{H}}

\newcommand{\cL}{\mathcal{L}}

\newcommand{\cP}{\mathcal{P}}
\newcommand{\cQ}{\mathcal{Q}}

\newcommand{\cU}{\mathcal{U}}

\newcommand{\cW}{\mathcal{W}}
\newcommand{\cX}{\mathcal{X}}
\newcommand{\cY}{\mathcal{Y}}

\newcommand{\bbC}{\mathbb{C}}

\newcommand{\bbP}{\mathbb{P}}
\newcommand{\bbQ}{\mathbb{Q}}
\newcommand{\bbR}{\mathbb{R}}

\newcommand{\bbZ}{\mathbb{Z}}

\newcommand{\congpf}{\xymatrix@L=0.6ex@1@=15pt{\ar[r]^-\sim&}}

\newcommand{\fg}{\mathfrak{g}}

\newcommand{\fM}{\mathfrak{M}}

\newcommand{\Pic}{\operatorname{Pic}}
\newcommand{\Br}{\operatorname{Br}}

\newcommand{\Db}{\operatorname{D}^b}
\newcommand{\ch}{\operatorname{ch}}
\newcommand{\rank}{\operatorname{rank}}

\newcommand{\rH}{\mathrm{H}}
\newcommand{\rHt}{\widetilde\rH}
\newcommand{\Hprim}{\rH_{\mathrm{prim}}}
\newcommand{\lprim}{\mathrm{lprim}}
\newcommand{\Hlprim}{\rH_{\lprim}}
\newcommand{\Htr}{\rH_{\mathrm{tr}}}
\newcommand{\Sym}{\operatorname{Sym}}
\newcommand{\Sd}{\operatorname{S}_{[d]}}
\newcommand{\SO}{\operatorname{SO}}
\renewcommand{\IJ}{\operatorname{IJ}}
\renewcommand{\div}{\operatorname{div}}
\newcommand{\iso}{\xrightarrow{\sim}}
\newcommand{\OG}{\mathrm{OG}10}
\newcommand{\Kthreen}{\mathrm{K}3^{[n]}}

\usepackage[T2A]{fontenc}
\DeclareSymbolFont{cyrillic}{T2A}{cmr}{m}{n}
\DeclareMathSymbol{\Sha}{\mathalpha}{cyrillic}{216}

\newtheorem{theorem}{Theorem}[section]
\newtheorem{lemma}[theorem]{Lemma}
\newtheorem{proposition}[theorem]{Proposition}

\newtheorem{assumption}{Assumption}
\newtheorem{conjecture}[theorem]{Conjecture}
\newtheorem{corollary}[theorem]{Corollary}

\theoremstyle{definition}
\newtheorem{definition}[theorem]{Definition}
\newtheorem{construction}[theorem]{Construction}
\newtheorem{question}[theorem]{Question}
\newtheorem{remark}[theorem]{Remark}

\newtheorem{example}[theorem]{Example}

\newcommand{\marking}{\varphi}
\newcommand{\isocls}{\eta}
\newcommand{\cnstsign}{\xi}

\newcommand{\mbmconst}{N}

\usepackage[
  backend=biber,
  style=alphabetic,
  sorting=nyt,
  maxcitenames=50,
  maxnames=50
]{biblatex}

\renewbibmacro{in:}{}
\AtEveryBibitem{%
  \clearfield{issn} 
  \clearfield{doi} 
  \clearfield{isbn}
  \ifentrytype{online}{}{
    \clearfield{url}
  }
}

\DeclareSourcemap{
  \maps[datatype=bibtex]{
    \map{
      \step[typesource=arxiv,       typetarget=misc]
      \step[fieldset=entrysubtype, fieldvalue=arxiv]
    }
  }
}

\DeclareFieldFormat[misc]{title}{%
  \iffieldequalstr{entrysubtype}{arxiv}
    {\mkbibquote{#1}}
    {\mkbibemph{#1}}}

\setlist[enumerate]{label={\rm{(\roman*)}}}

\numberwithin{equation}{section}

\begin{document}

\begin{abstract}
    Following ideas of Kapustka--Kapustka \cite{kapustkaconstruction}, we use Lagrangian fibrations to construct twisted hyperholomorphic sheaves on products of hyperk\"ahler manifolds of $\Kthreen$- and $\OG$-type. As applications, we prove the Lefschetz standard conjecture and the D-equivalence conjecture for hyperk\"ahler manifolds of $\OG$-type. 
\end{abstract}

\maketitle

\tableofcontents

\section{Introduction}
Projective hyperk\"ahler manifolds have proved a fruitful testing ground for many open problems in algebraic geometry. Many special tools are available in this setting. By the work of Huybrechts and Verbitsky, the Hodge theory of their second cohomology is exceedingly well-understood and is closely related to their birational geometry \cite{basicresults, huybrechtstorelli, verbitsky}. Other aspects of their birational geometry (such as their nef and movable cones) have also been described by Markman and Amerik--Verbitsky \cite{markmanmonodromy,amerikverbitsky}. \par
One other aspect of the theory that has recently found applications for hyperk\"ahler manifolds of $\Kthreen$-type is Markman's hyperholomorphic sheaf \cite{markmanalgebraic}. Building on work of Verbitsky and Buskin \cite{verbitskyhyperholomorphic, buskin}, Markman shows that for a K3 surface $S$ and a (fine) moduli space $M$ parametrizing stable sheaves on $S$ which is also a K3 surface, there is a sheaf $\cU^{[n]}$ on $S^{[n]} \times M^{[n]}$ produced from the universal sheaf $\cU$ on $S \times M$ which has excellent properties. Most importantly, the sheaf $\cU^{[n]}$ deforms over a certain moduli space of pairs of hyperk\"ahler manifolds of $\Kthreen$-type. \par 
We mention a few applications of Markman's hyperholomorphic sheaves. Let us recall the following conjecture of Grothendieck. For any smooth projective variety $X$ and any ample divisor $L$ on $X$, we have the cup product $(L\cup - ) \in \mathrm{End}(\rH^*(X,\bbQ))$. This operator naturally extends to a $\mathfrak{sl}_2$-triple $(L \cup -,h, \Lambda)$, where $h|_{\rH^d(X,\bbQ)}$ is multiplication by $d$. 
\begin{conjecture}[{Lefschetz standard conjecture \cite[p.\ 196]{grothendiecklsc}}]\label{conj:lsc}
    The dual Lefschetz operator $\Lambda$ is induced by an algebraic cycle on $X \times X$. 
\end{conjecture}
This was proved for all $\Kthreen$-type hyperk\"ahler manifolds first by Charles--Markman \cite{k3nlsc}, but Markman observed in \cite{markmanalgebraic} that the existence of the hyperholomorphic sheaf $\cU^{[n]}$ (or rather, its deformations) gives a very short and different proof of the same result. \par
In the direction of derived categories, Bondal--Orlov and Kawamata separately conjectured the following \cite{bondalorlov,kawamata}.

\begin{conjecture}[D-equivalence conjecture]\label{conj:dequiv}
If $X$ and $X'$ are smooth projective birational Calabi--Yau varieties, then there is an equivalence of bounded derived categories
\[D^b(X) \simeq D^b(X').\]
\end{conjecture}
Relying on the existence of derived equivalences between hyperkähler manifolds of $\Kthreen$-type arising from Markman's hyperholomorphic sheaf, Maulik--Shen--Yin--Zhang have recently shown that Conjecture~\ref{conj:dequiv} holds for hyperkähler varieties of $\Kthreen$-type. \par 
The goal of this paper is to provide a new construction of twisted hyperholomorphic sheaves on certain products $X \times Y$ of hyperk\"ahler manifolds. Most interestingly, our construction holds not only in the $\Kthreen$ deformation type, but also for hyperk\"ahler manifolds of $\OG$-type. As applications, we show that Conjecture~\ref{conj:lsc} and Conjecture~\ref{conj:dequiv} hold for hyperkähler varieties of $\OG$-type, see Theorem~\ref{thm:lscintro} and Theorem~\ref{thm:dequiv_intro} below.

The key idea in our construction goes back to work of Kapustka--Kapustka \cite[Sections 6 and 7]{kapustkaconstruction}, and we outline the general strategy below.  \par 
Begin with a projective hyperk\"ahler manifold $Y$ of $\Kthreen$- or $\OG$-type which admits two transverse Lagrangian fibrations \[\begin{tikzcd}
            & Y \arrow[ld, "\pi_1"'] \arrow[rd, "\pi_2"] &             \\
\mathbb P^n &                                            & \mathbb P^n,
\end{tikzcd}\]
i.e., we assume that each fiber of $\pi_1$ intersects every fiber of $\pi_2$ in finitely many points. \par 
In good situations (which can be achieved via Hodge theory, see Construction \ref{construction:hklattice}), each Lagrangian fibration separately realizes $Y$ as a Tate--Shafarevich twist of a different compactified abelian scheme with all fibers integral. In the $\Kthreen$ case, the main ingredient for this is work of Markman \cite{markmanlagrangian}. In the $\OG$ case, we rely on results by Dutta--Mattei--Shinder \cite{ijtwists}, which characterize Tate--Shafarevich twists of the LSV fibration associated to a cubic fourfold. \par
As a result, we produce a triple of Lagrangian-fibered hyperk\"ahler manifolds of $\Kthreen$- or $\OG$-type
\begin{equation*}
    \begin{tikzcd}
        X \arrow[rd, "\pi_X"'] & & Y \arrow[ld, "\pi_1"] \arrow[rd, "\pi_2"'] & & Z, \arrow[ld, "\pi_Z"] \\
        & \bbP^n & & \bbP^n & 
    \end{tikzcd}
\end{equation*}
where $X$ and $Z$ are compactifications of abelian schemes such that $\pi_1$ (resp. $\pi_2$) realizes $Y$ as a Tate--Shafarevich twist of $X$ (resp. $Z$). \par 
Relying on work of Arinkin \cite{arinkin} on derived equivalences for compactified Jacobians, a twisted generalization by Bottini \cite{bottinitenfolds}, as well as analogues for LSV fibrations coming from recent work of Yu \cite{yulsv}, there are Brauer classes $\alpha_X \in \Br(X)$ and $\alpha_Z \in \Br(Z)$ along with twisted sheaves \[\cP_{X,Y} \in \Db(X\times Y,\mathrm{pr}_1^*\alpha_X), \quad \cP_{Y,Z} \in \Db(Y \times Z, \mathrm{pr}_2^*\alpha_Z).\]
We then let $\cQ \in \Db(X \times Z, \alpha_X\boxtimes \alpha_Z)$ be the Fourier--Mukai kernel of the composition of these functors. \par 
For certain hyperk\"ahler fourfolds arising from the double EPW sextic construction, Kapustka--Kapustka show that $\cQ$ is a hyperholomorphic sheaf, and they conjectured that the same should hold for a more general class of hyperkähler manifolds of $\Kthreen$-type \cite{kapustkaconstruction}. By relying on a number of technical results of Bottini \cite{bottinitenfolds}, we are able to prove the following generalization of the result of Kapustka--Kapustka: 
\begin{theorem}[Theorem \ref{thm:hyperholexistence}]\label{thm:hyperholexistenceintro}
Let $Y$ be a projective hyperkähler manifold of $\Kthreen$- or $\OG$-type. Then the object $\cQ \in \Db(X \times Z, \alpha_X\boxtimes \alpha_Z)$ constructed above is a twisted hyperholomorphic vector bundle on the product of the two hyperkähler manifolds $X$ and $Z$ of $\Kthreen$- or $\OG$-type.
\end{theorem}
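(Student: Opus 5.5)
The plan has two parts. First I show that $\cQ$ is a locally free sheaf concentrated in one degree. Then I show that it is hyperholomorphic, meaning projectively hyperholomorphic as in Verbitsky's and Markman's sense: its twisted endomorphism algebra, or equivalently the class $\kappa(\cQ)=\ch(\cQ)\exp(-c_1(\cQ)/\rank\cQ)$, stays of Hodge type under all deformations of the pair $(X,Z)$ that keep the relevant classes of type $(p,p)$. The hyperholomorphic part should then follow from Verbitsky's criterion: a slope-polystable (twisted) bundle with $c_1,c_2$ of type $(p,p)$ for all complex structures in the twistor family is hyperholomorphic.

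\textbf{Local freeness.} I work fiberwise over $\bbP^n\times\bbP^n$. The kernel $\cP_{X,Y}$ is supported on $X\times_{\bbP^n}Y$ via $\pi_X,\pi_1$, and $\cP_{Y,Z}$ on $Y\times_{\bbP^n}Z$ via $\pi_2,\pi_Z$. Both are sheaves flat over each factor; this is Arinkin's result, Bottini's twisted version, and Yu's result in the LSV case. Hence
\[\cQ=p_{XZ*}\bigl(p_{XY}^*\cP_{X,Y}\otimes p_{YZ}^*\cP_{Y,Z}\bigr).\]
At a point $(x,z)$ with $\pi_X(x)=s$ and $\pi_Z(z)=t$, the derived fiber $\cQ|_{(x,z)}$ is computed on $Y_s\cap Y_t$. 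Here $Y_s=\pi_1^{-1}(s)$ and $Y_t=\pi_2^{-1}(t)$, and by transversality this intersection is finite. It is also flat over $\bbP^n\times\bbP^n$: the map $(\pi_1,\pi_2)\colon Y\to\bbP^n\times\bbP^n$ is generically finite between smooth varieties of equal dimension $2n$, so it is quasi-finite and hence finite and flat by miracle flatness. The restricted tensor product is then a sheaf on a zero-dimensional scheme, so its pushforward has no higher cohomology. By cohomology and base change, $\cQ$ is locally free of rank $\deg(\pi_1,\pi_2)\cdot r$, where $r$ is the fiberwise rank of the Poincaré sheaves; the Poincaré sheaves are line bundles on the smooth loci. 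One subtle point is checking Tor-independence on singular fibers. I would use maximal Cohen--Macaulayness of the Poincaré sheaves (Arinkin, Bottini), which makes the derived tensor product underived.

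\textbf{Stability.} $\cQ$ is the kernel of a composition of equivalences, so $\cQ$ is simple, and in fact $\mathrm{Ext}^*(\cQ,\cQ)\cong \rH^*(X,\cO)$-like. I would show slope stability with respect to a suitable polarization using simplicity together with the structure of $\cQ$ restricted to $\{x\}\times Z$: this restriction is the image of a skyscraper sheaf, a stable sheaf supported on a fiber. Following Markman's argument for $\cU^{[n]}$, and Bottini, I would deduce that $\cQ$ is slope-stable for some $\alpha$-twisted polarization.

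\textbf{Hodge classes and deformation.} The main obstacle is showing that $\kappa(\cQ)$ stays of type $(p,p)$ along twistor lines. My approach is to compute the induced isometry $\phi=\kappa(\cQ)_*\colon\rH^2(X)\to\rH^2(Z)$ using Taelman's and Beckmann's extended Mukai lattice and Bottini's results. Since $\cQ$ induces a derived equivalence, $\kappa$ is an equivalence-induced cohomological correspondence. If $\phi$ is a Hodge isometry, then parallel transport along pairs $(X_t,Z_t)$ with $\rH^2(Z_t)=\phi(\rH^2(X_t))$ keeps $\kappa$ of Hodge type, because $\kappa$ lies in the subalgebra generated by $\rH^2$-invariants of the monodromy, as in Markman's and Taelman's descriptions. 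In particular this holds along twistor lines of $X\times Z$ for the product hyperkähler metric matched by $\phi$. Combined with stability, Verbitsky's theorem then gives that $\cQ$ is twisted hyperholomorphic. I expect the explicit computation of $\phi$ on the fiber and section classes, and checking that it is an isometry with the right sign, to be the technical heart of the argument.
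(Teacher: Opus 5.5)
Your local-freeness argument is sound, and it takes a different route from the paper. The paper applies Proposition~\ref{prop:imageoffinlag} to the rank-one Cohen--Macaulay sheaf $\Phi_{\cP_{X,Y}}(\cO_x)$ on $\pi_1^{-1}(s)$, which is finite of degree $n!k^n$ over $\bbP^n$ via $\pi_2$. You instead compute the derived fibre at $(x,z)$ directly. The Tor-vanishing you need is Serre's theorem for Cohen--Macaulay modules of complementary dimension meeting in finite length. The two restrictions $\cP_{X,Y}|_{\{x\}\times Y}$ and $\cP_{Y,Z}|_{Y\times\{z\}}$ are Cohen--Macaulay because the Poincar\'e sheaves are CM and flat over each factor. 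One small slip: quasi-finiteness of $(\pi_1,\pi_2)$ comes from transversality, not from generic finiteness.

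\textbf{The gap: stability with respect to the right class.} This is where the content of the theorem lies, and it fails in two ways.
\begin{enumerate}
\item Neither simplicity of $\cQ$ nor the fact that $\cQ|_{\{x\}\times Z}=\Phi_\cQ(\cO_x)$ is the image of a stable object gives slope stability, since equivalences do not preserve stability. Stability of the fibre restrictions is Lemma~\ref{lem:stabilitylemma} (Bottini, via O'Grady's modular sheaves). It holds only for $a_Z$-suitable polarizations, i.e.\ those near the nef class $\eta_Z$ relative to walls of square $\geq -a_Z$. The paper uses this for both families $\cQ|_{\{x\}\times Z}$ and $\cQ|_{X\times\{z\}}$ to get stability of $\cQ$.
\item More seriously, stability ``for some polarization'' is not what Verbitsky's criterion or Theorem~\ref{thm:hyperholdef} requires. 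You need stability with respect to $\mathrm{pr}_1^*\omega_X+\mathrm{pr}_2^*\omega_Z$ with $\omega_Z=\xi\psi(\omega_X)$ K\"ahler, for $\omega_X$ in an open subcone of $\mathrm{Kah}(X)$. Only for such a class is the twistor line of $X\times Z$ a diagonal twistor line in $\fM_\phi$, along which $\kappa(\cQ)$ stays of Hodge type. In your plan the stability class and the matched twistor line are produced independently, and nothing forces them to agree. A priori it is not even clear that $\pm\psi$ sends any K\"ahler class of $X$ to a K\"ahler class of $Z$.
\end{enumerate}

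\textbf{How the paper closes it.} Lemma~\ref{lem:isotropicpreserved} factors $\widetilde\Psi_\cQ$ through the untwisted Poincar\'e isometries of Proposition~\ref{prop:rank4sublattice} and the degenerate-twistor parallel transports. Using that $\widetilde\Psi_\cQ$ is degree-reversing (Lemma~\ref{lem:posrankdegreversing}), it shows $\psi(\eta_X)=\xi\eta_Z$ and $\psi(\theta_X)=\xi\theta_Z$. Perturbing $\eta_X$ into the ample cone gives an open set $U$ with $\xi\psi(U)\subset\mathrm{Amp}(Z)$. Friedman's bound then makes $H_X+d\eta_X/2$ $a_X$-suitable while its image $\xi\psi(H_X)+d\eta_Z/2$ is $a_Z$-suitable. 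Lemma~\ref{lem:amptokahler} thickens this to an open cone in $\mathrm{Kah}(X)$.

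So computing $\psi$ on the fibre class is indeed the technical heart, but its role is matching suitable polarizations, not keeping $\kappa(\cQ)$ Hodge. Hodge-ness over all of $\fM_\phi$ is formal once Assumption~\ref{ass:llvequiv} holds, and here that comes from Corollary~\ref{cor:poincarellv}; Taelman's theorem does not apply directly to twisted kernels. The mechanism is invariance of $\kappa(\cQ)$ under the $\psi$-twisted diagonal LLV action, not $\kappa$ lying in a subalgebra generated by $\rH^2$.
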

As applications, we prove the following two theorems, separately giving analogues of the main theorems of \cite{k3nlsc} and \cite{msyz} in the $\OG$ case. 
\begin{theorem}[Theorem~\ref{thm:lsc}]\label{thm:lscintro}
   Conjecture~\ref{conj:lsc} holds for hyperk\"ahler varieties of $\OG$-type. 
\end{theorem}
Notably, this result generalizes \cite[Theorem 1.2]{lsvlefschetzstd} as well as \cite[Corollary 1.9]{ffzlsc} which prove Conjecture \ref{conj:lsc} for various lower-dimensional strata in the moduli space of hyperkähler manifolds of $\OG$-type.
\begin{theorem}[Theorem~\ref{thm:dequivalence}]\label{thm:dequiv_intro}
    Conjecture~\ref{conj:dequiv} holds for hyperkähler varieties of $\OG$-type.
\end{theorem}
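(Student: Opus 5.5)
The plan follows the strategy of Maulik--Shen--Yin--Zhang in the $\Kthreen$ case, with Markman's sheaf $\cU^{[n]}$ replaced by the twisted hyperholomorphic bundle $\cQ$ of Theorem~\ref{thm:hyperholexistenceintro}. Let $X_1, X_2$ be birational projective hyperkähler manifolds of $\OG$-type. By Huybrechts, they are non-separated points of the moduli space of marked manifolds. Hence there are families $\cX_1, \cX_2 \to \Delta$ over a disc which are isomorphic over $\Delta\setminus\{0\}$ and have central fibers $X_1, X_2$. The goal is to produce a Fourier--Mukai kernel on $X_1 \times X_2$ that is a specialization of kernels of equivalences. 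Equivalently, the goal is to realize the birational correspondence as a limit of a deformation of a derived equivalence $\Phi_{\cQ}$ together with its inverse.

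The first step is to deform the hyperholomorphic bundle. Since $\cQ$ is a twisted hyperholomorphic (projectively hyperholomorphic, slope-stable) bundle on $X\times Z$, Verbitsky's theory implies that it deforms along twistor lines. Here we use the isometry $\rH^2(X)\cong \rH^2(Z)$ induced by $\cQ$, after checking that $c_2$ of the endomorphism bundle is of Hodge type on all fibers. Concatenating twistor paths, $\cQ$ deforms to a twisted bundle $\cQ_t$ on $X_t\times Z_t$ for every pair $(X_t,Z_t)$ in the connected component of the moduli space of pairs related by the parallel-transported isometry. This component contains pairs with $X_t\cong Z_t$ via some parallel transport operator, up to Brauer twist. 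Each $\cQ_t$ is the kernel of an equivalence: being an equivalence is an open condition and is checked on $\Phi_\cQ$ itself, which is a composition of the Arinkin/Bottini/Yu equivalences. Moreover $\Phi_{\cQ_t}$ acts on cohomology by a fixed parallel transport operator. One must also arrange that the Brauer classes $\alpha_{X_t},\alpha_{Z_t}$ vanish at the points of interest. I would do this by choosing the points $t$ of Picard rank one (or generic in a Noether--Lefschetz locus where the classes become trivial), as MSYZ do, using the explicit description of $\alpha_X,\alpha_Z$ from Construction~\ref{construction:hklattice}.

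The second step is the degeneration. Consider a generic one-parameter family through $X_1$ and $X_2$ on which $\cQ_t$ is defined over the punctured disc as a kernel on $\cX_{1,t}\times\cX_{2,t}$. I would apply Markman's/Buskin's argument: take the flat limit (a reflexive/derived extension) $\cQ_0$ of $\cQ_t$ on $X_1\times X_2$, and then show that $\Phi_{\cQ_0}$ is an equivalence. The point is that the kernel of $\Phi_{\cQ_t}^{-1}\circ\Phi_{\cQ_t}$ is $\cO_\Delta$. Upper semicontinuity of $\mathrm{Ext}$ between point objects, together with the Bridgeland--Maciocia/Bondal--Orlov criterion for equivalences, then forces the limit to be fully faithful. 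Since $X_1,X_2$ are Calabi--Yau of the same dimension, a fully faithful functor is an equivalence.

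The main obstacle is that the limit of a deformed hyperholomorphic bundle on $X_1\times X_2$ need not be locally free, or even remain an equivalence kernel, at the special point. MSYZ handle this via Markman's results on projectively hyperholomorphic sheaves over non-Hausdorff points, which require the bundle to deform along all Kähler classes. I expect to need an analogous statement for $\cQ$. To get it, I would first try to prove slope-stability of $\cQ$ with respect to every Kähler class and use Verbitsky's theorem that stable hyperholomorphic bundles extend over the whole twistor family. Then I would invoke the argument of \cite{msyz} that the monodromy/parallel transport realizing the birational map is achieved by such a family.
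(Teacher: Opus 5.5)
There is a genuine gap, and it is the step you yourself flag as the main obstacle. The degeneration argument does not work, and the proof does not need it.

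\textbf{Why the degeneration fails.} Upper semicontinuity of $\dim \mathrm{Hom}(\Phi(\cO_x),\Phi(\cO_y)[i])$ runs the wrong way. Vanishing for the kernels $\cQ_t$ with $t\neq 0$ gives no vanishing for a flat limit on $X_1\times X_2$. So the Bondal--Orlov criterion cannot be verified there, and such limits are in general not equivalence kernels. Proving stability of $\cQ$ for all Kähler classes would not repair this.

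\textbf{The family is never produced.} $\psi$ is only a rational ($k$-cyclic, Corollary~\ref{cor:kcyclic}) Hodge isometry. The two factors of a point of $\fM_\psi^0$ are therefore typically not isomorphic. Nothing guarantees that pairs $(\cX_{1,t},\cX_{2,t})$ with $\cX_{1,t}\cong\cX_{2,t}$ over a punctured disc lie in $\fM_\psi^0$.

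\textbf{The Brauer classes.} $X_1$ and $X_2$ are fixed and, being distinct birational models, have Picard rank $\geq 2$, so you cannot pass to Picard rank one points. By Corollary~\ref{cor:bfielddeformationcor}, the Brauer class on $X_1$ is the image of a parallel-transported B-field. Its triviality has to be forced by the choice of parallel transport. Separately, openness of the equivalence property does not by itself propagate along whole twistor paths; that is what Theorem~\ref{thm:hyperholequivalence} supplies.

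\textbf{The missing idea.} The actual strategy of Maulik--Shen--Yin--Zhang, which the paper follows, finds a \emph{common twisted partner} instead of taking any limit of kernels.
\begin{enumerate}
\item Reduce to $X,X'$ in adjacent chambers separated by the wall of an MBM class $\cW$.
\item Lemmas~\ref{lem:og10div1class} and~\ref{lem:isotropicmodification} give $\cD\in\Pic(X)$ of divisibility one with $(\cW,\cD)=N'$ and $\cD^2=2g>2NN'$. They also give $v\in\rH^2(X,\bbZ)$ with $\cD+gv$ isotropic of divisibility one.
\item Run Construction~\ref{construction:hklattice} with $k=g$ to get $\cQ_0$ on $X_0\times Z_0$. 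By Lemma~\ref{lem:eichlercriterion} there is a parallel transport $\rho_{X_0,X}$ with $-\epsilon\gamma_2\mapsto\cD+gv$.
\item This transport carries the B-field $3\isocls_{X_0}-\epsilon\gamma_2/g$ of Lemma~\ref{lem:explicitBfields} to $3\isocls_X+\cD/g+v\in\Pic(X)_\bbQ+\rH^2(X,\bbZ)$. Hence the Brauer class on $X$, and on $X'$, is trivial.
\item The same choice yields Lemma~\ref{lem:mbmmap}. Suppose $\psi$ sent $\cW$ to $\frac{b}{a}\cW'$ with $\cW'$ of MBM type; then $a^2<N$. But integrality of $\psi(a\cW)$ together with Proposition~\ref{prop:divcondition} forces $g\mid aN'$, which is impossible since $0<aN'<g$.
\item Consequently a class on the wall between $\mathrm{Kah}(X)$ and $\mathrm{Kah}(X')$ is sent into the interior of a chamber on the $Z$ side. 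This produces a single $Z''$ with both $(X,Z'')$ and $(X',Z'')$ in $\fM_\psi^0$.
\item Theorems~\ref{thm:hyperholdef} and~\ref{thm:hyperholequivalence} then give $\Db(X)\simeq\Db(Z'',\alpha_{Z''})\simeq\Db(X')$. The Brauer class on $Z''$ is the same for both equivalences because it is determined by the marking.
\end{enumerate}
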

One should observe that Theorem~\ref{thm:hyperholexistenceintro} provides a new construction of (twisted) hyperholomorphic vector bundles on products of two hyperk\"ahler manifolds even for those of $\Kthreen$-type. It would be interesting to compare this to the one previously considered. 
\begin{question}
    Is the twisted hyperholomorphic vector bundle $\cQ$ deformation equivalent along diagonal twistor lines to one of the vector bundles $\cU^{[n]}$ considered by Markman in \cite{markmanalgebraic}? 
\end{question}
\subsection{Outline of the paper}
We review the necessary background on hyperholomorphic sheaves, Poincar\'e sheaves, and the Hodge theory and birational geometry of hyperk\"ahler manifolds in Section \ref{sec:background}. In Section \ref{sec:lsvtwists}, we show that a general hyperk\"ahler manifold of $\OG$-type admitting a rational Lagrangian fibration is birational to a Tate--Shafarevich twist of the LSV fibration. Using the theory of extended Mukai vectors, we investigate the cohomological action of the Poincar\'e sheaf for Beauville--Mukai and LSV fibrations in Section \ref{sec:cohomFM}, where we also define the \textit{Lagrangian primitive cohomology} of such hyperk\"ahler manifolds. We combine these ingredients in Section \ref{sec:main_construction}, where we construct examples of products of hyperk\"ahler manifolds which admit this new hyperholomorphic sheaf. Finally we present the applications of this sheaf to hyperk\"ahler manifolds of $\OG$-type in Section \ref{sec:applications}.
\subsection{Acknowledgments}

We would like to thank Younghan Bae, Daniel Huybrechts, Alex Perry, Laura Pertusi and Weite Pi for their feedback on a preliminary version of this draft. Special thanks go to Daniel Huybrechts and the University of Bonn for hosting the second author during the preparation of this preprint. We would also like to thank Alessio Bottini, Joe Foster, James Hotchkiss, Eyal Markman, and Lisa Marquand for interesting and useful conversations. 

The first author is grateful for the support provided by the ERC Synergy Grant 854361 HyperK as well as the International Max Planck Research School on Moduli Spaces at the Max Planck Institute for Mathematics in Bonn. The second author is grateful for the support provided by the National Science Foundation under NSF grant DMS-2143271 and NSF RTG grant DMS-1840234, as well as by the University of Michigan under the Rackham Conference Travel Grant. 

\section{Background} \label{sec:background}

Throughout, by a \emph{hyperkähler manifold}, we mean a simply connected compact Kähler manifold $X$ such that $\rH^0(X, \Omega^2_X) = \mathbb C \omega$, where $\omega$ is a everywhere nondegenerate holomorphic $2$-form on $X$.

\subsection{Isometries on the cohomology of \texorpdfstring{hyperk\"ahler}{hyperkähler} manifolds} 
In this section, we recall a number of basic results on isometries of the rational cohomology of a hyperk\"ahler manifold which will be useful in the sequel. \par 
\subsubsection*{Hodge isometries from (twisted) derived equivalences}
Most of the cohomological isometries appearing in this paper will arise from twisted derived equivalences. One way to formulate this is by making use of twisted Chern characters, as in \cite[Section 1]{huybrechtsstellari}. \par 
In our setting, assume $X$ and $Y$ are smooth projective varieties and $\cE$ is a $(-\mathrm{pr}_1^*\alpha + \mathrm{pr}_2^*\beta)$-twisted sheaf on $X \times Y$ for $\alpha \in \Br(X)$, $\beta \in \Br(Y)$. If $\alpha$ and $\beta$ are topologically trivial, so that they admit B-field lifts $B_\alpha \in \rH^2(X,\bbQ)$, $B_\beta \in \rH^2(Y,\bbQ)$; then $B = -\mathrm{pr}_1^*B_\alpha + \mathrm{pr}_2^*B_\beta$ is a B-field lift for $-\mathrm{pr}_1^*\alpha + \mathrm{pr}_2^*\beta$. As in \cite{bottinitenfolds}, we take 
\begin{equation}
    \Phi_\cE^{\rH} \coloneq \mathrm{pr}_{2,*}(v^B(\cE)\cdot\mathrm{pr_1^*}(-)) \colon \rH^*(X,B_\alpha,\bbQ) \to \rH^*(Y,B_\beta,\bbQ).
\end{equation}
where $v^B(\cE) \coloneq \sqrt{\mathrm{td}(X\times Y)}\cdot\ch^B(\cE)$. Following \cite[Remark 1.3]{huybrechtsstellari}, we use $\rH^*(X,B_\alpha,\bbQ)$ to denote the \textit{rational twisted Hodge structure}, i.e., the underlying lattice $\rH^*(X,\bbQ)$ with Hodge structure pulled back along the isometry
\[\rH^*(X,\bbQ) \xrightarrow{\cdot\exp(B_\alpha)}\rH^*(X,\bbQ).\]
\subsubsection*{LLV-equivariant isometries}
Fix $X$ a smooth projective hyperk\"ahler manifold of dimension $2n$. By Looijenga--Lunts and Verbitsky \cite{looijengalunts, verbitskythesis}, there is a canonical Lie algebra $\fg(X) \subset \mathrm{End}(\rH^*(X,\bbQ))$ which is called the LLV Lie algebra generated by all $\mathfrak{sl}_2$-triples $(e_\omega,f_\omega,h_\omega)$ for $\omega \in H^2(X,\bbQ)$ satisfying hard Lefschetz. 
\begin{definition} \label{def:llvequiv}
    Let $\phi \colon \rH^*(X,\bbQ) \to \rH^*(Y,\bbQ)$ be an isometry. We say that $\phi$ is an \textit{LLV-equivariant} isometry if 
    \[\fg(Y) = \phi\fg(X)\phi^{-1} \subset \mathrm{End}(\rH^*(Y,\bbQ)).\]
    We call an isometry $\phi \colon \rH^*(X,B_\alpha,\bbQ) \to \rH^*(Y,B_\beta,\bbQ)$ of rational twisted Hodge structures LLV-equivariant if it is so on the underlying lattices. 
\end{definition}
The key observation is that LLV-equivariant isometries are those which induce maps of rational extended Mukai lattices
\[\rHt(X,\bbQ) \coloneq \bbQ e\oplus \rH^2(X,\bbQ) \oplus \bbQ f\]
where $e,f$ are the generators of a hyperbolic plane orthogonal to $\rH^2(X,\bbQ)$ such that $(e,f) = -1$, and where $\rH^2(X,\bbQ)$ is endowed with the Beauville--Bogomolov--Fujiki quadratic form. This lattice admits a weight 2 Hodge structure with $\rHt^{2,0}(X,\bbQ) = \rH^{2,0}(X,\bbQ)$, and is endowed with an obvious grading. \begin{remark}
    To prevent confusion with Brauer classes, our notation for extended Mukai lattices differs from \cite{taelman} and \cite{beckmann}, both of which use the notation $\alpha$ for $e$ and $\beta$ for $f$. 
\end{remark}\par 
The starting point which connects the LLV Lie algebra with the rational extended Mukai lattice is the observation of Verbitsky that, if we take 
\[\Sd \rHt(X,\bbQ) \coloneq \ker\left(\Sym^d \rHt(X,\bbQ) \to \Sym^{d-2}\rHt(X,\bbQ)\right)\]
to be the kernel of contraction by the quadratic form, then there is an isomorphism $\Psi \colon \mathrm{SH}^*(X,\bbQ) \to \Sd\rHt(X,\bbQ)$, where $\mathrm{SH}^*(X,\bbQ)$ is the LLV-invariant subalgebra of $\rH^*(X,\bbQ)$ generated by $\rH^2(X,\bbQ);$ moreover, $\mathrm{SH}^*(X,\bbQ) \subset \rH^*(X,\bbQ)$ is an irreducible $\fg(X)$-subrepresentation occurring with multiplicity 1. The following consequences were then proved by Taelman. 
\begin{theorem}[{\cite[Theorems B and C]{taelman}}] \label{thm:taelman}
    Suppose that $\phi \colon \rH^*(X,\bbQ) \to \rH^*(Y,\bbQ)$ is an LLV-equivariant isometry of two smooth hyperk\"ahler manifolds in one of the known deformation types. Then there is a unique isometry  up to sign $\widetilde\phi \colon \rHt(X,\bbQ) \to \rHt(Y,\bbQ)$ so that 
    the square \[
    \begin{tikzcd}
        \mathrm{SH}^*(X,\bbQ) \arrow[r, "\phi"] \arrow[d,"\Psi"'] & \mathrm{SH}^*(X,\bbQ) \arrow[d,"\Psi"] \\
        \Sd\rHt(X,\bbQ) \arrow[r, "\Sd\widetilde\phi"] & \Sd\rHt(Y,\bbQ)
    \end{tikzcd}\]
    commutes. Moreover, this assignment is functorial.
\end{theorem}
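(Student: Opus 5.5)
The plan is to reduce the statement to a rigidity fact about orthogonal Lie algebras, using the structure theory of the LLV algebra of Looijenga--Lunts and Verbitsky. The first input is that for $X$ in any of the known deformation types, $\fg(X)\cong\mathfrak{so}(\rHt(X,\bbQ))$. Here $\rHt(X,\bbQ)$ carries its Mukai pairing; the grading operator $h$ acts with weight $-2$ on $e$, $0$ on $\rH^2$ and $2$ on $f$ (up to a shift). Moreover, Verbitsky's isomorphism $\Psi\colon \mathrm{SH}^*(X,\bbQ)\to \Sd\rHt(X,\bbQ)$, with $d=n=\dim X/2$, is $\fg(X)$-equivariant for the natural action of $\mathfrak{so}(\rHt)$ on $\Sd\rHt$. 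I take these as known. In all known types $b_2(X)\ge 5$, so $\dim\rHt(X,\bbQ)\ge 7$.

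Given an LLV-equivariant isometry $\phi$, conjugation by $\phi$ gives a Lie algebra isomorphism
\[c_\phi\colon \mathfrak{so}(\rHt(X,\bbQ))\cong\fg(X)\to\fg(Y)\cong\mathfrak{so}(\rHt(Y,\bbQ)).\]
The key step is to realise $c_\phi$ by a linear map of the underlying quadratic spaces. I will argue in the following order.

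First, $\phi$ maps $\mathrm{SH}^*(X,\bbQ)$ onto $\mathrm{SH}^*(Y,\bbQ)$. Indeed, $\phi(\mathrm{SH}^*(X))$ is an irreducible $\fg(Y)$-subrepresentation isomorphic, via $c_\phi$, to the twist of $\Sd\rHt(X)$. The representation $\mathrm{SH}^*(Y)$ is the unique copy of this isotypic type, being the one containing $\rH^0$ and the component of maximal highest weight occurring with multiplicity one. So the two subrepresentations agree.

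Second, since the quadratic spaces have dimension $\ge 7$, the special orthogonal algebras are simple of type $B$ or $D_{\ge4}$. Triality in type $D_4$ only occurs in dimension $8$, and there it is ruled out because $c_\phi$ sends the standard representation's symmetric powers to symmetric powers: it matches $\Sd\rHt(X)$ with $\Sd\rHt(Y)$. Hence $c_\phi$ is induced by a similitude $g\colon \rHt(X,\bbQ)\to\rHt(Y,\bbQ)$, unique up to a scalar in $\bbQ^\times$, because automorphisms of $\mathfrak{so}(V)$ preserving the vector representation come from $\mathrm{GO}(V)$ over any field of characteristic $0$.

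Third, both $\Psi\circ\phi|_{\mathrm{SH}}\circ\Psi^{-1}$ and $\Sd g$ intertwine $\Sd\rHt(X)$ and $\Sd\rHt(Y)$ compatibly with $c_\phi$. By Schur's lemma (the representations are absolutely irreducible) they differ by a scalar $\lambda$. Rescaling $g$ by $\mu$ rescales $\Sd g$ by $\mu^d$. The remaining task is to choose $\mu$ so that $\mu g$ is an isometry and simultaneously $\mu^d\lambda=1$. For this I would compare the pairings: $\phi$ preserves the Mukai pairing on cohomology, and $\Psi$ matches it (up to a universal constant depending only on $n$ and the type) with the pairing on $\Sd\rHt$ induced from $\rHt$. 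If $g$ has similitude factor $s$, this comparison forces $s^{d}\lambda^{2}=1$-type relations.

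I expect this scalar bookkeeping to be the main obstacle. The point to check is that one can pick $\mu\in\bbQ$ with $\mu^2 s=1$ and $\mu^d\lambda=1$, rather than only over $\overline{\bbQ}$. The way I would try first is to evaluate on $\rH^0$: $\Psi(1)$ is a fixed power of $e$, and $\phi(1)$ has degree-zero part determined by $h$-equivariance, since $c_\phi$ preserves the grading element. This gives $\lambda$ and $s$ explicitly in terms of the image of $e$, and pins $\mu$ down to $\pm1$ times a rational number.

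Uniqueness up to sign then follows. If $\Sd g=\Sd g'$ for isometries $g,g'$, then $g'g^{-1}$ acts trivially on $\Sd$, hence commutes with $\mathfrak{so}$, hence is a scalar $\pm1$ (isometry) with $(\pm1)^d=1$. Functoriality follows from uniqueness. For composable $\phi,\phi'$, the map $\widetilde{\phi'}\widetilde\phi$ satisfies the defining property for $\phi'\phi$, so it agrees with $\widetilde{\phi'\phi}$ up to the sign ambiguity, which is exactly the functoriality claimed.
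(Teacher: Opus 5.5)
The paper does not prove this statement; it is quoted from Taelman. Your skeleton is the standard one: identify $\fg\cong\mathfrak{so}(\rHt)$, realize $c_\phi$ by a similitude $g$ with factor $s$, and use Schur's lemma to write $\Psi\circ\phi\circ\Psi^{-1}=\lambda\,\Sd g$ on $\mathrm{SH}^*$ (with $d=n$). The gap is in the step you flag as the main obstacle, and the mechanism you propose for it rests on a false premise. LLV-equivariance only says $\phi\,\fg(X)\,\phi^{-1}=\fg(Y)$ as subalgebras; it does \emph{not} say $\phi h_X\phi^{-1}=h_Y$. Cohomological Fourier--Mukai transforms move degrees. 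For example, for the Poincar\'e sheaf of Proposition~\ref{prop:rank4sublattice} the induced map on $\rHt$ sends $f\mapsto\cnstsign\isocls$ and $\isocls\mapsto-\cnstsign f$, and $\phi(1)$ typically lives in high degree. So evaluating on $\rH^0$ determines neither $\lambda$ nor $s$.

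What the pairings actually give is $\lambda^2s^n=1$. The Mukai pairing is $\fg$-invariant, so on $\mathrm{SH}^*$ it corresponds under $\Psi$ to a fixed nonzero multiple of the pairing induced on $\Sd\rHt$. That multiple is the same for $X$ and $Y$, since equal $n$ and $b_2$ force the same known type and Fujiki constant. From here the two parities behave differently.
\begin{enumerate}
\item If $n$ is odd, $s^n\in(\bbQ^\times)^2$ forces $s\in(\bbQ^\times)^2$. There is then a unique $\mu\in\bbQ^\times$ with $\mu^2s=1$ and $\mu^n=\lambda$, so the square commutes on the nose.
\item If $n$ is even, the relation says nothing about the square class of $s$, so you need separate input. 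For $\Kthreen$- and Kummer type, the only known types where $n$ can be even, $\dim\rHt=b_2+2$ is odd. Comparing discriminants of the isometric spaces $\rHt(X,\bbQ)\cong\rHt(Y,\bbQ)$ then shows $s^{b_2+2}$, hence $s$, is a square.
\end{enumerate}
Even in the even case, normalizing $g$ to an isometry only gives $\lambda=\pm1$, and this sign cannot be absorbed because $\Sd(-g)=\Sd g$. The bad sign genuinely occurs. Take $\phi=-\mathrm{id}$, the action of the shift $[1]$. An isometry $\widetilde\phi$ with $\Sd\widetilde\phi=-\mathrm{id}$ would commute with $\mathfrak{so}(\rHt)$, hence be $\pm1$, but $\Sd(\pm1)=\mathrm{id}$.

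So for even $n$ the correct conclusion is commutativity up to a sign $\epsilon(\phi)=\pm1$. This is the sign alluded to in the remark right after the statement in the paper. Your uniqueness and functoriality arguments go through once this sign is carried along.

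A minor point: your argument for $\phi(\mathrm{SH}^*(X))=\mathrm{SH}^*(Y)$ and your exclusion of triality rely on each other. This is harmless, since for the known types $\dim\rHt\in\{9,10,25,26\}$. It is cleaner to argue in the other order. First, for $\dim V_i\geq5$ and $\dim V_i\neq 8$, every isomorphism $\mathfrak{so}(V_1)\to\mathfrak{so}(V_2)$ preserves the vector representation and is therefore induced by a similitude. Only then deduce $\phi(\mathrm{SH}^*(X))=\mathrm{SH}^*(Y)$ from the multiplicity-one property.
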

\begin{remark}
    To be more precise about the sign, when $n$ is odd the diagram above commutes on the nose. When $n$ is even, $\Sd\widetilde\phi = \Sd(-\widetilde\phi),$ so to make the assignment genuinely unique, one should fix an orientation on $\widetilde{H}(X,\bbQ)$ and multiply by $-1$ for orientation-reversing $\widetilde\phi.$
\end{remark}

\begin{example} \label{ex:taelmanllv}
    By another result of Taelman \cite[Theorem A]{taelman}, any (untwisted) derived equivalence of hyperk\"ahler manifolds \[\Phi_\cE \colon \Db(X) \to \Db(Y)\] induces an LLV-equivariant isometry on rational cohomology \[\Phi_\cE^\rH \colon \rH^*(X,\bbQ) \to \rH^*(Y,\bbQ)\] and therefore a map on extended Mukai lattices
    \[\Phi_{\cE}^{\rHt} \colon \rHt(X,\bbQ) \to \rHt(Y,\bbQ).\]
\end{example}
\begin{example} \label{ex:explambdallv}
    It also follows from the definition of the LLV Lie algebra that \[\exp(\lambda) \cdot (-) \colon \rH^*(X,\bbQ) \to \rH^*(X,\bbQ)\] is LLV-equivariant for $\lambda \in \rH^2(X,\bbQ)$ \cite[Proposition 3.2]{taelman}. The induced map \[\exp(\lambda) \cdot (-) \colon \rHt(X,\bbQ) \to \rHt(X,\bbQ)\] can be described as 
    \[\exp(\lambda) \cdot(re+\mu+sf) = r\left(e+\lambda + \frac{\lambda^2}{2}f\right) + (\mu + (\mu,\lambda)f) + sf.\]
\end{example}
In general, the induced isometry of rational Mukai lattices guaranteed by Theorem \ref{thm:taelman} satisfies several pleasing properties. 
\begin{proposition} \label{prop:extmukaiprop}
    Suppose $\phi \colon \rH^*(X,\bbQ) \to \rH^*(Y,\bbQ)$ is an LLV-equivariant isometry. 
    \begin{enumerate}[label=\rm{(\roman*)}]
        \item \label{proppart:extmukaihodge} \cite[Proposition 4.9]{taelman} When the isometry $\phi$ is Hodge, so is the induced isometry $\widetilde{\phi}$.
        \item \label{proppart:extmukaigrading} \cite[Lemma 3.4]{markmanalgebraic} When $\phi$ is degree-preserving (resp. reversing), so is $\widetilde\phi.$
    \end{enumerate}
\end{proposition}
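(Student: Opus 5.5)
Both parts are cited results (Taelman, Markman), so the plan is to reconstruct their arguments from Theorem~\ref{thm:taelman} and the Verbitsky isomorphism $\Psi$. The guiding principle is that $\widetilde\phi$ is determined by $\phi$ on $\mathrm{SH}^*$. Therefore any structure on $\mathrm{SH}^*$ that $\phi$ preserves, and that corresponds under $\Psi$ to a $\Sd$-functorial structure on $\rHt$, should also be preserved by $\widetilde\phi$. The uniqueness statement (up to sign) in Theorem~\ref{thm:taelman} is what lets us descend such structures.

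For \ref{proppart:extmukaihodge}, encode the Hodge structure as an operator. Let $h_X\in\mathrm{End}(\rH^*(X,\bbR))$ be the Weil operator acting on $\rH^{p,q}$ by $i(p-q)$. By Verbitsky this is an element of $\fg(X)_\bbR$; it lies in the $\mathfrak{so}(\rH^2)$-part and acts as a derivation. Under $\Psi$, its restriction to $\mathrm{SH}^*$ is the operator induced on $\Sd\rHt$ by the Weil operator $\tilde h_X$ of $\rHt(X)$, extended by zero on $e,f$.

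If $\phi$ is a Hodge isometry, then $\phi h_X\phi^{-1}=h_Y$. Conjugating by $\widetilde\phi$ gives the Lie algebra isomorphism $\mathfrak{so}(\rHt(X))\to\mathfrak{so}(\rHt(Y))$ compatible with $\fg(X)\cong\fg(Y)$. The representation $\Sd$ of $\mathfrak{so}(\rHt)$ is faithful, so $\widetilde\phi\tilde h_X\widetilde\phi^{-1}=\tilde h_Y$, which says exactly that $\widetilde\phi$ is Hodge.

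For \ref{proppart:extmukaigrading}, run the same argument with the grading operator $h\in\fg$, which acts on $\rH^{k}$ by $k-2n$. On $\rHt$ it corresponds to the operator $h'$ acting by $2$ on $e$, by $0$ on $\rH^2$, and by $-2$ on $f$, up to a normalisation making $\Sd h'$ equal to the grading.
\begin{itemize}
\item If $\phi$ is degree preserving, then $\phi h\phi^{-1}=h$, hence $\widetilde\phi h'\widetilde\phi^{-1}=h'$, so $\widetilde\phi$ preserves the eigenspaces $\bbQ e$, $\rH^2$, $\bbQ f$.
\item If $\phi$ is degree reversing, then $\phi h\phi^{-1}=-h$, so $\widetilde\phi$ swaps $\bbQ e$ and $\bbQ f$ and preserves $\rH^2$.
\end{itemize}

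The main obstacle is the step from $\Sd$ back to $\rHt$. One must check that conjugation by $\phi$ on $\fg$ corresponds to conjugation by $\widetilde\phi$ on $\mathfrak{so}(\rHt)$. Restricting $\fg(X)$ to $\mathrm{SH}^*$ gives $\mathfrak{so}(\rHt)$, since $\mathrm{SH}^*$ is the irreducible summand of multiplicity one. Equivariance of $\Psi$ and the commuting square of Theorem~\ref{thm:taelman} then give the claim, with faithfulness of $\Sd$ (for $d\ge1$) pinning down the operator. The sign ambiguity in $\widetilde\phi$ does not matter, because conjugation by $-\widetilde\phi$ equals conjugation by $\widetilde\phi$.
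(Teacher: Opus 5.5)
Your argument is correct. The paper gives no proof of its own and only cites Taelman and Markman; your reconstruction (transport the Weil operator, resp.\ the grading operator, from $\fg(X)$ to $\mathfrak{so}(\rHt(X))$ using the equivariance of $\Psi$, the commuting square of Theorem~\ref{thm:taelman}, and faithfulness of $\Sd$) is the standard argument behind those cited results.

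One cosmetic slip: in the paper's convention $e$ corresponds to $\rH^0$ and $f$ to the top degree (cf.\ Example~\ref{ex:explambdallv}), so the grading operator acts on $\rHt$ by $-2$ on $e$ and $+2$ on $f$. This does not affect your eigenspace argument.
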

Owing to Example \ref{ex:explambdallv} and Proposition \ref{prop:extmukaiprop}\ref{proppart:extmukaihodge}, when given the data of a B-field for a Brauer class, it is natural to define a \textit{twisted extended Mukai lattice}, cf. \cite[Section 2.2]{bottinitenfolds}. 
\begin{definition}
    Suppose $B \in \rH^2(X,\bbQ)$. The \textit{$B$-twisted extended Mukai lattice}
    \[\rHt(X,B,\bbQ)\]
    is defined as the underlying lattice $\rHt(X,\bbQ)$ with Hodge structure pulled back along \[\rHt(X,\bbQ) \xrightarrow{\exp(B)\cdot}\rHt(X,\bbQ).\]
\end{definition}
\subsubsection*{The \texorpdfstring{Poincar\'e}{Poincaré} sheaf isometry}
The Fourier--Mukai kernels we are interested in will arise from Cohen--Macaulay sheaves extending Poincar\'e bundles on compactifications of abelian schemes.

In the following, we suppose that either $X = \overline\Pic^0(S,L) \coloneqq \overline{\Pic}^0(\mathcal C / |L|) \to \bbP^n$ is a compactified Jacobian fibration of the universal curve $\mathcal C \to |L|$ of a linear system on a polarized K3 surface~$(S,L)$ satisfying that all members of $|L|$ are integral, or $X = \overline\IJ(C) \to \bbP^n$ is a compactified intermediate Jacobian fibration associated to a \emph{very good} cubic fourfold $C$, i.e., a smooth cubic fourfold not containing a plane, a cubic scroll, or a hyperplane section with a corank 3 singularity, see \cite[Definition 2]{verygoodcubics}. Note that, under this assumption, the fibers of the Lagrangian fibration $X \to \mathbb P^n$ are integral, cf.\ \cite[Theorem 1]{verygoodcubics}. Moreover, let $X_t \to \bbP^n$ be a projective Tate--Shafarevich twist of $X$. \par 
It follows from work of Arinkin \cite[Theorems A and C]{arinkin} that, in the case of the compactified Jacobian fibration, there exists a Poincar\'e sheaf $\cP$ on $X \times X$ inducing an equivalence \[\Db(X) \iso \Db(X).\]
The analogous statement for the compactified intermediate Jacobian fibration of a cubic fourfold  was recently proved by Yu \cite[Theorems 0.1 and 0.2]{yulsv}, using a descent argument and the fact that the fibers of the LSV fibration can be described as compactified Prym varieties.
We will need the following variants of these results, which accommodate Tate--Shafarevich twists.

\begin{theorem}[{\cite[Theorem 3.3]{bottinitenfolds} and \cite[Theorem 6.7]{yulsv}}] \label{thm:poincare}
    There is a natural Brauer class $\alpha_t \in \Br(X)$ and a $\mathrm{pr}_2^*\alpha_t$-twisted Cohen--Macaulay sheaf $\cP_{X_t, X}$ on $X_t \times_{\bbP^n} X \subset X_t \times X$ giving a twisted derived equivalence 
    \[\Phi_{\cP_{X_t, X}} \colon \Db(X_t) \iso \Db(X,\alpha_t).\]
\end{theorem}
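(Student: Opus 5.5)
This theorem is cited from \cite[Theorem 3.3]{bottinitenfolds} and \cite[Theorem 6.7]{yulsv}. The plan is to twist Arinkin's (resp.\ Yu's) untwisted Poincar\'e sheaf by descent along an étale cover of the base. I would run the same argument in the Beauville--Mukai and LSV cases, taking as input only the untwisted statement: a Cohen--Macaulay Poincar\'e sheaf $\cP$ on $X\times_{\bbP^n}X$ that induces an autoequivalence of $\Db(X)$ and is compatible with the group structure of the smooth locus $X^{\mathrm{sm}}\to\bbP^n$.

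\emph{Step 1 (the twist as a cocycle and the Brauer class).} The Tate--Shafarevich twist $X_t$ corresponds to a class $t\in \rH^1(\bbP^n, X^{\mathrm{sm}})$, where $X^{\mathrm{sm}}$ is the group scheme given by the smooth locus, which acts on $X$. I choose an étale (or analytic) cover $\{U_i\}$ of $\bbP^n$ and a cocycle of sections $t_{ij}\in X^{\mathrm{sm}}(U_{ij})$, so that $X_t$ is glued from the pieces $X|_{U_i}$ via translation by $t_{ij}$. On $X_t\times_{\bbP^n}X$ over $U_i$, I take the local kernel $\cP|_{U_i}$. On overlaps, the theorem of the square for $\cP$ gives
\[(\tau_{t_{ij}}\times\mathrm{id})^*\cP\cong \cP\otimes \mathrm{pr}_2^*\cP_{t_{ij}},\]
where $\cP_{t_{ij}}$ is the line bundle on $X|_{U_{ij}}$ obtained by restricting $\cP$ to $\{t_{ij}\}\times X$. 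The gluing isomorphisms therefore fail to satisfy the cocycle condition exactly by an element of $\mathcal O^*$ on triple overlaps. This $\mathbb G_m$-cocycle defines $\alpha_t\in\Br(X)$; it is the image of $t$ under the natural map $\rH^1(\bbP^n,X^{\mathrm{sm}})\to \rH^2(X,\mathbb G_m)$ induced by $\cP$, so it is natural. Gluing the local pieces then yields a $\mathrm{pr}_2^*\alpha_t$-twisted sheaf $\cP_{X_t,X}$. It is Cohen--Macaulay and supported on $X_t\times_{\bbP^n}X$, because both properties are local and hold for $\cP$.

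\emph{Step 2 (equivalence).} Fully faithfulness and essential surjectivity can be checked locally over the base. The convolutions $\cP_{X_t,X}^\vee\circ\cP_{X_t,X}$ and $\cP_{X_t,X}\circ\cP_{X_t,X}^\vee$ are objects supported over the diagonal of $\bbP^n$. Over each $U_i$ they agree with the untwisted convolutions, which by Arinkin/Yu are structure sheaves of the relative diagonals, up to a shift and a line bundle pulled back from the base. Since the twisting cocycles cancel in these convolutions, the local isomorphisms glue, giving $\Db(X_t)\simeq\Db(X,\alpha_t)$.

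\emph{Main obstacle.} I expect the hardest part to be Step 1 over the singular fibers. The square formula and the translation action must extend to $\cP$ on all of $X\times_{\bbP^n}X$, not just over the locus of smooth fibers, and the resulting local isomorphisms must be canonical enough for the $\mathbb G_m$-cocycle to be well defined. My first approach is to use reflexivity and the Cohen--Macaulay property of $\cP$. Two maximal Cohen--Macaulay sheaves that are isomorphic off a subset of codimension at least $2$ are isomorphic, so the identity can be verified where one factor lies in the smooth locus and then extended. Arinkin proves the needed codimension estimates, and Yu proves them via compactified Pryms for LSV. A secondary issue is projectivity of $X_t$, which forces the étale rather than analytic setting; this is already assumed in the statement.
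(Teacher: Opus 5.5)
Your sketch is correct and follows the standard étale-gluing argument behind the cited results \cite[Theorem 3.3]{bottinitenfolds} and \cite[Theorem 6.7]{yulsv}; the paper itself gives no proof beyond these citations. The argument is to trivialize the twist over a cover of $\bbP^n$, glue the local untwisted Poincar\'e sheaves using the translation formula (extended over the singular fibers by the Cohen--Macaulay/codimension-two argument), and check the equivalence locally on the base. One point needs more careful phrasing: the nontrivial part of $\alpha_t$ comes from the line bundles $\mathrm{pr}_2^*\cP_{t_{ij}}$, since scalar discrepancies on overlaps of a cover of the base alone would only give a class pulled back from $\Br(\bbP^n)=0$; so the $\mathbb{G}_m$-cocycle lives on a refinement of the cover of the second factor $X$ that trivializes these line bundles, or equivalently it is the class of the gerbe they define.
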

The following proposition is an argument of Bottini; while he states the result for Tate--Shafarevich twists of a compactified Jacobian fibration $\overline\Pic^0(S,L)$, by the same proof it holds also for Tate--Shafarevich twists of compactified intermediate Jacobian fibrations.
\begin{proposition}[\cite{bottinitenfolds}, Proposition 3.9] \label{prop:extmukaipoincare}
Let $X_t \to \bbP^n$ be a Tate--Shafarevich twist of either a compactified Jacobian fibration or compactified intermediate Jacobian fibration (as in \cite{ijtwists}) $X \to \bbP^n$. There exists a B-field lift $B_t$ of $\alpha_t$ so that the induced isomorphism on rational cohomology 
\[\Phi^\rH_{\cP_t} \colon \rH^*(X_t,\bbQ) \to \rH^*(X,B_t,\bbQ)\]
decomposes as 
\[\rH^*(X_t,\bbQ) \xrightarrow{\rho} \rH^*(X,\bbQ) \xrightarrow{\Phi^\rH_{\cP}} \rH^*(X,\bbQ) = \rH^*(X,\bbQ,B_t),\]
where $\rho$ is parallel transport along the degenerate twistor line, $\Phi^\rH_\cP$ is the Fourier--Mukai transform associated to the untwisted Poincar\'e sheaf on $X\times X,$ and the last equality is the natural identification of $\rH^*(X,\bbQ)$ and $\rH^*(X,B_t,\bbQ)$ as lattices which does not preserve Hodge structures.
\end{proposition}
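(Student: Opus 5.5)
We prove Proposition~\ref{prop:extmukaipoincare} by interpolating the twisted Poincar\'e sheaf between the untwisted one ($t=0$) and $\cP_t$, following Bottini's argument. The Tate--Shafarevich twists $X_t$ lie in a one-parameter family over the affine line of the degenerate twistor line: for $\sigma\in\rH^{2,0}$ spanning the relevant direction, the twisted classes are $\sigma_t=\sigma+t\,\pi^*h$ with $h$ the hyperplane class on $\bbP^n$. In the LSV case this family is the one constructed in \cite{ijtwists}. Over this line the fibrations $X_s\to\bbP^n$ share the same discriminant and the same smooth fibers up to torsor twisting. Hence the relative Poincar\'e sheaves $\cP_{X_s,X}$ on $X_s\times_{\bbP^n}X$ should glue into a family of twisted sheaves over the base of the degenerate twistor family. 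The Brauer classes $\alpha_s$ vary in the image of $\rH^2(X,\cO_X)\cong\rH^2(\bbP^n,\cO)$-twisted directions, i.e.\ in $\exp$ of a line of B-fields.

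\textbf{Step 1.} Choose the B-field. The class $\alpha_t$ is the image of $t$ under $\Sha\to\Br(X)$, which factors through $\rH^2(X,\cO_X)$ and the exponential sequence. We take $B_t$ to be the rational class with $(0,2)$-part equal to the image of $t\bar\sigma$-type term and $B_t\in\rH^2(X,\bbQ)$ for torsion $t$. This makes $\exp(B_t)$ carry the Hodge structure of $X$ to the one transported from $X_t$ by $\rho$.

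\textbf{Step 2.} Use rigidity of the cohomological transform in flat families. The family of twisted kernels $\cP_{X_s,X}$, viewed with B-fields $B_s$ depending linearly on $s$, has twisted Mukai vector $v^{B_s}(\cP_{X_s,X})$. We regard this as a flat section of $\rH^*(X_s\times X,\bbQ)$, trivialized by $\rho\times\mathrm{id}$. This is the main obstacle. It requires that the family of kernels actually exists over the twistor base, which is possibly only as a complex-analytic or formal family, and that twisted Chern characters with varying B-fields are locally constant. We would handle this as Bottini does. Locally over $\bbP^n$ the torsors trivialize, so the twisted kernel is étale-locally the pullback of $\cP$ under translation. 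This gives $\ch^{B_s}$ as a continuous, hence constant, function of $s$ on a connected base, and it agrees with $\ch(\cP)$ at $s=0$.

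\textbf{Step 3.} Specialize to $s=0$ and conclude. The flat section at $s=0$ is $v(\cP)$, so $v^{B_t}(\cP_t)=(\rho\times\mathrm{id})_*v(\cP)$. Plugging this into the definition of $\Phi^\rH$ and using the projection formula yields $\Phi^\rH_{\cP_t}=\Phi^\rH_\cP\circ\rho$ as maps of lattices. The only extra ingredient in the intermediate Jacobian case is Theorem~\ref{thm:poincare} (via \cite{yulsv}), which supplies the existence and Cohen--Macaulay property of $\cP_{X_t,X}$ in place of Arinkin's result. The remaining argument is formal, since it uses only integrality of the fibers and the Tate--Shafarevich family structure. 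Hodge compatibility on the target then holds exactly because $\rH^*(X,B_t,\bbQ)$ carries the Hodge structure pulled back by $\exp(B_t)$.
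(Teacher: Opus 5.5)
The paper does not reprove this statement. It imports Bottini's Proposition 3.9 and only remarks that the argument carries over to the LSV case. Your overall strategy is reasonable: deform the kernel along the Tate--Shafarevich family, untwist with B-fields varying linearly in the parameter, and specialize to the untwisted $\cP$ at $s=0$. As a proof, however, it has a genuine gap exactly at Step 2, which is the whole content of the proposition.

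You assert that the kernels $\cP_{X_s,X}$ ``should glue'' and then defer to Bottini, and the justification you sketch does not work:
\begin{enumerate}
\item Identifying the kernel locally over $\bbP^n$ with translates of $\cP$ says nothing about the global class $v^{B_s}(\cP_{X_s,X})$. The twist is produced precisely by the gluing data that such local descriptions forget.
\item ``Continuous, hence constant'' has no meaning here. Classes on different fibers $X_s\times X$ can only be compared through a global family. If the untwistings are chosen fiber by fiber, $\ch^{B_s}$ depends on those choices (up to factors $\exp(\mathrm{pr}_2^*c_1(M))$, $M\in\Pic(X)$), so there is no continuous function of $s$ to speak of.
\item A formal family around $s=0$, which you allow, would be useless anyway, since you must reach $s=t$.
\end{enumerate}

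What you actually need is the following.
\begin{enumerate}
\item[(a)] A relative twisted kernel $\widetilde{\cP}$ on $\cX\times_{\bbP^n\times T}(X\times T)$, flat over a connected base $T\ni 0,t$. It is obtained by running the gluing construction behind Theorem~\ref{thm:poincare} with a \v{C}ech cocycle scaled by $s$. It is twisted by a class $\widetilde\alpha$ on $X\times T$ restricting to $\alpha_s$ on each fiber.
\item[(b)] A global $C^\infty$ $\widetilde\alpha$-twisted line bundle $\widetilde L$ on $X\times T$, trivial on $X\times\{0\}$. It exists because $\widetilde\alpha$ is topologically trivial: $\rH^3(X\times T,\bbZ)=\rH^3(X,\bbZ)=0$ for both deformation types.
\end{enumerate}
Then $\ch(\widetilde{\cP}\otimes\mathrm{pr}^*\widetilde L^{-1})$ is a single class on a total space retracting onto the central fiber. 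The comparison with $\ch(\cP)$ at $s=0$ is then immediate, with no continuity argument needed.

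This also shows what is wrong with Step 1. The B-field in the statement is not an arbitrary rational lift of $\alpha_t$ with prescribed $(0,2)$-part. That condition only determines $B_t$ modulo $\rH^2(X,\bbZ)+\mathrm{NS}(X)_\bbQ$. Replacing $B_t$ by $B_t+b$ with $b\in\rH^2(X,\bbZ)$ turns $\Phi^\rH_{\cP_t}$ into $\exp(\pm b)\cdot\Phi^\rH_{\cP_t}$, which destroys the decomposition. The correct $B_t$ is the one realized by $\widetilde L|_{X\times\{t\}}$, and your ``linear $B_s$'' is meaningful only once $\widetilde L$ is built from such a linear family of cocycles.

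Your Hodge-theoretic justification is also incorrect: $\exp(B_t)$ does not carry the Hodge structure of $X$ to $\rho(\rH^*(X_t))$. In $\rHt(X,\bbQ)$, Example~\ref{ex:explambdallv} gives $\exp(\pm B_t)\sigma=\sigma\pm(\sigma,B_t)f$, whereas $\rho(\sigma_t)=\sigma+t\isocls$. These match only after applying $\Phi^{\rHt}_\cP$, which exchanges $\isocls$ and $f$ up to sign by Proposition~\ref{prop:rank4sublattice}.

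Finally, $\rH^2(X,\cO_X)$ is $\rH^1(\bbP^n,R^1\pi_*\cO_X)$, not $\rH^2(\bbP^n,\cO)$, which vanishes.
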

Applying results of Taelman (see Examples \ref{ex:taelmanllv} and \ref{ex:explambdallv}), one concludes: 
\begin{corollary} \label{cor:poincarellv}
    The isometry $\rH^*(X_t,\bbQ) \to \rH^*(X,B_t,\bbQ)$ is LLV-equivariant. In particular, there is an induced map of extended Mukai lattices 
    \[\Phi_{\cP_t}^{\rHt} \colon \rHt(X_t,\bbQ) \to \rHt(X,B_t,\bbQ).\]
\end{corollary}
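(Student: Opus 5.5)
The plan is to use the factorization in Proposition~\ref{prop:extmukaipoincare}: it writes $\Phi^\rH_{\cP_t}$ as a composite of maps each already known to be LLV-equivariant, and LLV-equivariance is stable under composition. Recall that Definition~\ref{def:llvequiv} for twisted Hodge structures only concerns the underlying lattices. So the last identification $\rH^*(X,\bbQ) = \rH^*(X,B_t,\bbQ)$, which is the identity on lattices, is trivially LLV-equivariant, and we may ignore Hodge structures throughout.

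\textbf{Step 1: the map $\rho$.} Parallel transport along the degenerate twistor line is induced by a diffeomorphism $X_t \simeq X$ of the underlying smooth manifolds in the family. It therefore preserves the cup product and sends $\rH^2(X_t,\bbQ)$ to $\rH^2(X,\bbQ)$ isometrically for the BBF form. The LLV algebra $\fg(X)$ is generated by the $\mathfrak{sl}_2$-triples $(e_\omega,h,f_\omega)$ attached to Lefschetz classes $\omega\in\rH^2$. Both $e_\omega$ and $h$ are determined by the graded ring structure, and $f_\omega$ is uniquely determined by $e_\omega$ and $h$ (Jacobson--Morozov uniqueness). Hence $\rho\,\fg(X_t)\,\rho^{-1} = \fg(X)$.

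\textbf{Step 2: the map $\Phi^\rH_\cP$.} This is the cohomological action of the untwisted Poincaré sheaf, which by Arinkin (resp.\ Yu) gives a derived autoequivalence of $\Db(X)$. By Example~\ref{ex:taelmanllv} it is an LLV-equivariant isometry. One point needs care: Taelman's theorem concerns equivalences given by kernels and their Mukai vectors $v(\cE)=\sqrt{\mathrm{td}}\cdot\ch(\cE)$. This agrees with the paper's normalization of $\Phi^\rH$, so no correction is needed. If one instead used a B-field-shifted twisted kernel, Example~\ref{ex:explambdallv} would absorb the extra factors $\exp(\pm B_t)$, since these are LLV-equivariant as well.

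\textbf{Step 3: conclusion.} Composing Steps 1 and 2 with the identity on lattices shows that $\Phi^\rH_{\cP_t}$ is an LLV-equivariant isometry. Theorem~\ref{thm:taelman} then produces the induced isometry $\Phi^{\rHt}_{\cP_t}\colon \rHt(X_t,\bbQ)\to\rHt(X,\bbQ)$, and we regard its target as $\rHt(X,B_t,\bbQ)$. Taelman's theorem applies because $X_t$ and $X$ are of $\Kthreen$- or $\OG$-type, both of which are known deformation types.

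The only step with any substance is Step 1: checking that parallel transport along a \emph{degenerate} twistor line is still induced by a diffeomorphism, or equivalently that it is a monodromy-type ring isometry. This is the step I expect to be the main obstacle. I would argue it via the local triviality of the smooth family over the degenerate twistor line, as in Bottini's construction.
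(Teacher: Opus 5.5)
Your proposal is correct and follows the paper's own argument. You factor $\Phi^\rH_{\cP_t}$ via Proposition~\ref{prop:extmukaipoincare}, apply Taelman's Theorem A (Example~\ref{ex:taelmanllv}) to the untwisted Poincar\'e autoequivalence, treat the lattice identification as trivially LLV-equivariant, and then invoke Theorem~\ref{thm:taelman}. The step you flag as the main obstacle is routine: the degenerate twistor family is a smooth proper family of complex structures on a fixed $C^\infty$ manifold, so $\rho$ is a graded ring isometry and conjugates the LLV algebras exactly as you argue.
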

\begin{remark} \label{rmk:transposermk}
    Since the normalized untwisted Poincar\'e sheaf $\cP$ is invariant under swapping the factors, Theorem \ref{thm:poincare} and the subsequent Proposition \ref{prop:extmukaipoincare} and Corollary \ref{cor:poincarellv} all have obvious analogues (with identical proofs) when the twisted Poincar\'e sheaf is treated as a kernel in the opposite direction. We will need this fact.
\end{remark}
The following observation of Bottini shows that the Fourier--Mukai transform induced by the Poincar\'e sheaf naturally produces twisted vector bundles from half-dimensional subspaces transverse to the Lagrangian fibers. The proof carries over verbatim to also handle the case of compactified intermediate Jacobian fibrations associated to very good cubic fourfolds.
\begin{proposition}[{\cite[Proposition 6.2]{bottinitenfolds}}] \label{prop:imageoffinlag}
    Let $Z$ be a subvariety of $X_t$ and $i \colon Z \hookrightarrow X_t$ the embedding. Let $G$ be a rank $1$ sheaf on $Z$ such that $i_* G$ is Cohen--Macaulay on $X_t$. Assume that the composition $Z \subset X_t \to \mathbb P^n$ is finite of degree $r$. Then 
    $E \coloneqq \Phi_{\cP_{X_t, X}}(i_*G)$ is a twisted locally free sheaf of rank $r$ on $X$. Moreover, if $i_*G$ is atomic, then $E$ is twisted atomic.
\end{proposition}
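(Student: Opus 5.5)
We compute $E=\Phi_{\cP_{X_t,X}}(i_*G)$ fiberwise over $\bbP^n$, using that the kernel is supported on $X_t\times_{\bbP^n}X$. Write $p\colon X_t\to\bbP^n$ and $q\colon X\to\bbP^n$. Since $\cP_{X_t,X}$ lives on the fiber product, we have
\[E\simeq \mathrm{pr}_{2,*}\bigl(\cP_{X_t,X}\otimes \mathrm{pr}_1^*i_*G\bigr),\]
where the pushforward is along $X_t\times_{\bbP^n}X\to X$. Because $Z\to\bbP^n$ is finite, the support $Z\times_{\bbP^n}X$ of the tensor product is finite over $X$. Hence $\mathrm{pr}_{2,*}$ is exact on sheaves there, and the only possible source of higher cohomology sheaves is the derived tensor product. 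The first step is to show that this tensor product is a sheaf, i.e.\ that it has no higher $\mathcal{T}or$. Here I would use that $\cP_{X_t,X}$ is Cohen--Macaulay on the fiber product and flat over each factor: it is flat over $X$ by Arinkin's, Bottini's and Yu's constructions, and hence a family of CM sheaves on the fibers $X_{t,b}$. I would also use that $i_*G$ is CM of codimension $n$ with support finite over the base. A local depth count, or the Serre-duality trick of the form $\cP^\vee\simeq \cP'[-n]$ with $\cP'$ again CM, then gives $\mathcal{T}or_{>0}=0$. So $E$ is a coherent twisted sheaf, equal to the pushforward of a sheaf finite over $X$.

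The second step is local freeness. It suffices to show that $E$ is Cohen--Macaulay of full dimension $2n$ on the smooth variety $X$, because CM on a smooth variety with full support means locally free. Equivalently, I would show that $E^\vee=\mathcal{H}om(E,\cO)$ is again a sheaf concentrated in degree $0$. For this I would use Grothendieck duality together with the known behaviour of duals of Poincaré sheaves: the Fourier--Mukai transform of the dual of $i_*G$, namely $\mathcal{E}xt^n(i_*G,\omega)$, which is again a rank one CM sheaf on $Z$, equals $E^\vee$ up to shift, by the inverse-kernel formula for $\Phi_{\cP}$. The first step then shows that $E^\vee$ is a sheaf, so $E$ is reflexive with $\mathcal{E}xt^{>0}(E,\cO)=0$, hence locally free. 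As an alternative, one can check base change to a point $x\in X$ over $b\in\bbP^n$. The fiber $E\otimes k(x)$ is $\rH^*$ of $\cP|_{X_{t,b}\times\{x\}}$, a rank one torsion-free sheaf on the integral fiber, restricted to the finite scheme $Z_b$ of length $r$. This is concentrated in degree $0$ with dimension $r$ exactly when the restriction has no higher Tor, which holds since rank one CM sheaves on the fiber are flat along the finite scheme in the appropriate sense. Constancy of fiber dimension on reduced $X$ then gives local freeness. The rank $r$ comes from the generic point: over a general $b$ the fiber is an abelian variety, $Z_b$ is $r$ reduced points, and $\cP$ restricts to $r$ line bundles.

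Finally, twisted atomicity follows formally. Atomicity is defined through the extended Mukai vector and the annihilator in $\fg$, and it is transported by LLV-equivariant isometries. By Corollary~\ref{cor:poincarellv}, $\Phi^{\rH}_{\cP_t}$ is LLV-equivariant and maps extended Mukai lattices. So $\tilde v(E)=\Phi^{\rHt}_{\cP_t}(\tilde v(i_*G))$ up to the twisting by $\exp(B_t)$, and the Lie-theoretic condition is preserved. I expect the main obstacle to be the vanishing of higher Tor and Ext in the second step over the singular fibers, where $Z_b$ can be non-reduced and can meet the non-smooth locus of $X_{t,b}$. This is exactly where the CM hypothesis on $i_*G$ and the CM property of the Poincaré sheaf must be combined, via Auslander--Buchsbaum on the fiber product.
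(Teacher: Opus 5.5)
The paper gives no argument of its own here. It imports Bottini's Proposition~6.2 and only remarks that the proof carries over verbatim to the LSV case. Your outline is a sound proof. It uses only formal properties of the kernel (Cohen--Macaulay, flat over both factors, well-behaved dual), which is why it works uniformly for $\overline\Pic^0$ and $\overline\IJ$. Several justifications need correcting, however.

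\begin{itemize}
\item \textbf{Step~1.} The vanishing of higher $\mathcal{T}or$ does not come from a depth count; over the singular scheme $W=X_t\times_{\bbP^n}X$ a depth count alone would not give it. It is immediate from flatness of $\cP$ over the \emph{first} factor. Write $q_1\colon W\to X_t$ and $q_2\colon W\to X$. Then $\cP\otimes^L_{\cO_W}q_1^*i_*G\simeq\cP\otimes^L_{q_1^{-1}\cO_{X_t}}q_1^{-1}i_*G$, which is a sheaf. Flatness over $X$, which is what you invoke, is what you need for base change to points $x\in X$, not for this.
\item \textbf{Step~2.} The identity you want is Grothendieck--Verdier duality, not the inverse-kernel formula. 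Since $\omega_{X_t}\simeq\cO$, it reads $\Phi_{\mathcal K}(F)^\vee\simeq\Phi_{\mathcal K^\vee[2n]}(F^\vee)$. Write $\cP^\vee=\cP'[-n]$ and $(i_*G)^\vee=G'[-n]$; the shifts cancel and $E^\vee\simeq\Phi_{\cP'}(G')$. To rerun Step~1 you must also know that $\cP'=\mathcal{E}xt^n(\cP,\cO)$ is flat over $X_t$. This follows from derived base change of $R\mathcal{H}om$ together with the Cohen--Macaulayness of the restrictions $\cP|_{\{y\}\times X}$.
\item \textbf{Fiberwise alternative.} This route is fine in principle, but ``flat along the finite scheme'' is not a valid reason. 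The correct reason is that $i_*G$ and $\cP|_{X_t\times\{x\}}$ are Cohen--Macaulay sheaves of dimension $n$ on the smooth $2n$-fold $X_t$ meeting in a finite set, hence Tor-independent (Serre/Auslander).
\end{itemize}
With these fixes, the singular fibers and non-reduced $Z_b$ you worry about cause no difficulty. Auslander--Buchsbaum is only ever needed on a smooth variety, never on $W$.

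The most direct route is different from yours. Since $\cP$ is flat over $X_t$ with Cohen--Macaulay fibers $\cP|_{\{y\}\times X}$ of dimension $n$, the flat-extension formulas for depth and dimension show that $\cP\otimes q_1^*i_*G$ is Cohen--Macaulay of dimension $2n$. (This fiber property is the input from Yu's Lemma~2.10 that the paper itself uses in Lemma~\ref{lem:kernellocfree}.) Its pushforward under the finite map $Z\times_{\bbP^n}X\to X$ is then maximal Cohen--Macaulay on the smooth $2n$-dimensional $X$, hence locally free. This gives sheafness and local freeness in one stroke and needs nothing about $\cP^\vee$. Your duality route costs the extra input on $\cP'$, but it identifies $E^\vee$ as the transform of the dual Cohen--Macaulay sheaf $G'$. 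Your rank computation at a general point and your atomicity argument via LLV-equivariance (Corollary~\ref{cor:poincarellv}) are fine as written.
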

Recall that a polarization $H \in \mathrm{Amp}(X)_\bbR \subset \mathrm{Kah}(X)$ on $X$ is called $a$-\textit{suitable} if for any hyperplane orthogonal to a class $\lambda \in \rH^{1,1}(X,\bbZ)$ such that $-a \leq \lambda^2 < 0$, $H$ lies on the same half space as $\isocls_X =\pi_X^*\cO_{\bbP^n}(1)$. With respect to certain $a$-suitable polarizations, the twisted atomic vector bundles are in fact stable (and hence hyperholomorphic on $X$ by \cite[Proposition 2.6]{bottinitenfolds}, though we will not directly use this fact). The proof of the following theorem in \cite{bottinitenfolds} carries over verbatim to also handle the case of compactified intermediate Jacobian fibrations associated to very good cubic fourfolds.
\begin{lemma}[{\cite[Theorem 6.6]{bottinitenfolds}}] \label{lem:stabilitylemma}
    Under the hypotheses of Proposition \ref{prop:imageoffinlag}, the locally free twisted sheaf $\cE \coloneq \Phi_{\cP_{X_t,X}}(i_*G)$ is $\mu_H$-stable, where $H$ is an $a(\cE)$-suitable polarization for $a(\cE) \in \bbQ$ as defined for twisted modular vector bundles in \cite[Definition 6.4]{bottinitenfolds} and $$\mu_H(\cE) \coloneq \frac{c_1^B(\cE).H^{2n-1}}{\mathrm{rk}(\cE)}.$$
\end{lemma}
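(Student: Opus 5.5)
Following Bottini's argument for \cite[Theorem 6.6]{bottinitenfolds}, the plan is to restrict $\cE$ to a general smooth fiber of $X \to \bbP^n$ and then use the $a(\cE)$-suitability of $H$ to transfer stability from a polarization near $\isocls_X$ to $H$.

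\textbf{Step 1 (restriction to a general fiber).} Let $A = X_b$ be a general fiber, an abelian variety of dimension $n$. Over the open set where $Z \to \bbP^n$ is étale, $i_*G$ restricts near $X_{t,b}$ to a direct sum of $r$ skyscraper sheaves at points $z_1,\dots,z_r$ of the torsor fiber $X_{t,b}$. Since $\cP_{X_t,X}$ is supported on $X_t\times_{\bbP^n}X$, the Fourier--Mukai transform commutes with base change to $b$, and the twisting becomes trivial on the fiber after a choice of trivialization. Hence
\[\cE|_A \cong \bigoplus_{j=1}^r P_{z_j},\]
where the $P_{z_j}\in\Pic^0(A)$ are the line bundles parametrized by the $z_j$ after identifying the torsor with $A^\vee \cong A$. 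I would also want these summands to be pairwise non-isomorphic. This needs a general $b$ where the $z_j$ are distinct, which holds because $Z\to\bbP^n$ is generically étale.

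\textbf{Step 2 (no destabilizing subsheaf of fiber-degree zero).} Suppose $\cF\subset\cE$ is a saturated subsheaf with $0<\rank\cF<r$. Its restriction to a general fiber is a subsheaf of a sum of degree-zero line bundles, so its slope with respect to any ample class on $A$ is at most $0$. Now consider the limiting class $\isocls_X$, for which $\isocls_X^{n}\cdot(-)$ sees only the fiber. The $\isocls_X$-"slope" of $\cF$ is therefore $\le 0 = \mu(\cE)$. To get strict inequality, or rather stability, I would take a polarization $H_\epsilon = \isocls_X + \epsilon h$ and expand $\mu_{H_\epsilon}$ in $\epsilon$. The leading term is governed by the fiber restriction. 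If it vanishes, $\cF|_A$ is a direct sum of some of the $P_{z_j}$. Monodromy then acts transitively on $\{z_j\}$, because $\cE$ is the transform of a rank one sheaf on $Z$ with $i_*G$ Cohen--Macaulay, and I would also need $Z$ irreducible. This forces $\cF|_A$ to be everything, a contradiction. So $\cE$ is $\mu_{H_\epsilon}$-stable for small $\epsilon$.

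\textbf{Step 3 (wall-crossing to $H$).} Stability can only change across walls $\lambda^\perp$, where $\lambda = \rank(\cE)c_1^B(\cF)-\rank(\cF)c_1^B(\cE)$. Atomicity, or the modular property of twisted atomic bundles, should give a Bogomolov-type bound $-a(\cE)\le\lambda^2<0$ for every potentially destabilizing $\cF$, as in \cite[Definition 6.4]{bottinitenfolds}. Since $H$ is $a(\cE)$-suitable, $H$ lies in the same chamber as $\isocls_X$, and hence $\cE$ is $\mu_H$-stable.

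\textbf{Main obstacle.} The bound on $\lambda^2$ for walls in Step 3 is the step I expect to be hardest. It requires the discriminant inequality for modular twisted sheaves and the fact that the class $\lambda$ of a wall for $\cE$ must have bounded negative square. In the LSV case, the additional point to check is that Step 1's base change works for fibers which are Prym-type abelian varieties, which follows from Yu's construction.
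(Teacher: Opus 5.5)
Your proposal is correct and follows the same route the paper relies on. The paper gives no independent argument here; it states that Bottini's proof of \cite[Theorem 6.6]{bottinitenfolds} carries over verbatim. You reconstruct that argument: every proper saturated subsheaf has strictly negative degree on a general Lagrangian fiber, because the restriction is a sum of pairwise non-isomorphic degree-zero line bundles permuted transitively by monodromy (using irreducibility of $Z$). You then conclude with O'Grady's suitability argument, where modularity and the Bogomolov inequality bound the square of any wall class, and where uniformity in $\epsilon$ follows from the Fujiki relation together with integrality of $c_1(\mathcal{E}^\vee\otimes\mathcal{F})$.
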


\subsection{Moduli spaces, periods, and twisted hyperholomorphic bundles} 
\subsubsection*{The moduli spaces \texorpdfstring{$\fM_\Lambda$}{M_λ} and \texorpdfstring{$\fM_\phi$}{M_ϕ}}
Fix an integral lattice $\Lambda.$ We will consider also the moduli space $\fM_\Lambda$ parametrizing pairs $(X,\marking_X)$ of a primitive hyperk\"ahler manifold with a marking
\[\marking_X \colon \rH^2(X,\bbZ) \iso \Lambda.\]
Fix now a rational isometry 
\[\phi \colon \Lambda_\bbQ \iso \Lambda_\bbQ.\] Following Buskin \cite[Section 4]{buskin} and Markman \cite[Section 5.2]{markmanalgebraic}, we also consider the moduli space of Hodge isometries $\fM_\phi \subset \fM_\Lambda \times \fM_\Lambda$ parametrizing tuples $(X,\marking_X,Y,\marking_Y)$ such that the rational isometry 
\[\marking_Y^{-1} \circ \phi \circ \marking_X \colon \rH^2(X,\bbQ) \iso \rH^2(Y,\bbQ)\]
sends some K\"ahler class of $X$ to some K\"ahler class of $Y$. \par 
Both $\fM_\Lambda$ and $\fM_\phi$ admit the structure of non-Hausdorff complex manifolds, which in the case of $\fM_\Lambda$ admits a simple description in terms of the period map 
\begin{align*}
    \cP \colon \fM_\Lambda &\to \Omega_\Lambda = \{[v] \in \bbP(\Lambda_\bbC) : v^2 = 0, (v,\bar{v}) > 0\} \\
    (X,\marking_X) &\mapsto \marking_X(H^{2,0}(X)).
\end{align*}
By Verbitsky's global Torelli theorem \cite{verbitsky}, see also \cite{huybrechtstorelli}, any two points $(X,\marking_X), (Y,\marking_Y)$ in $\fM_\Lambda$ are inseparable if and only if $\cP(X,\marking_X) = \cP(Y,\marking_Y)$ and the points are in the same component of $\fM_\Lambda$. \par 
One of the more interesting features of $\fM_\Lambda$ is the presence of two different types of special lines, which are called the twistor and degenerate twistor lines respectively. \par 
More precisely, if $X$ is a primitive hyperk\"ahler manifold with symplectic form $\sigma \in \rH^{2,0}(X)$, for any K\"ahler class $\omega \in \rH^{1,1}(X)$, the \textit{twistor family} is a natural deformation of complex structure $\cX \to \bbP^1$ where the base $\bbP^1 \hookrightarrow \fM_\Lambda$ is identified under the period map $\cP$ with \[\Omega_\Lambda \cap \bbP\marking_X(\langle\operatorname{Re} \sigma, \operatorname{Im}\sigma,\omega\rangle) \simeq \bbP^1.\] \par 
On the other hand, if $v \in \rH^{1,1}(X,\bbZ)$ is a nef isotropic class pulled back from the base of a Lagrangian fibration $X \to \bbP^n$, the \textit{degenerate twistor family} is a deformation of Lagrangian-fibered hyperk\"ahler manifolds $\cX \to \bbC$ where the period over $t \in \bbC \hookrightarrow \fM_\Lambda$ is given by $[\sigma + tv] \in \Omega_\Lambda.$ By \cite[Theorem 1.2]{abashevarogov} and \cite[Theorem 2.2.10]{abasheva}, see also \cite[Corollary 3.5]{huybrechtsbrilliant}, this family coincides with Tate--Shafarevich family produced by the standard regluing procedure. \par 
There is a similar notion to twistor lines on the level of the moduli space of Hodge isometries~$\fM_\phi$. Suppose $\omega_X \in \rH^2(X,\bbQ)$ is a K\"ahler class such that $\omega_Y \coloneq (\marking_Y^{-1} \circ \phi \circ \marking_X)(\omega_X)$ is also K\"ahler. Then there is a natural \textit{diagonal twistor line} $\bbP^1 \hookrightarrow \fM_\phi$ realizing the deformation $\cX \times_{\bbP^1} \cY \to \bbP^1$, where $\cX\to\bbP^1$ (resp.\ $\cY\to\bbP^1$) is the twistor deformation of $X$ (resp.\ $Y$) corresponding to the K\"ahler class $\omega_X$ (resp.\ $\omega_Y$). In fact, such lines cover the moduli space $\fM_\phi$. 
\begin{proposition}[{\cite[Lemma 5.15]{markmanalgebraic}}]
    Any two points $p = (X,\marking_X, Y,\marking_Y)$ and $q = (X',\marking_{X'},Y',\marking_{Y'})$ in a connected component $\fM_\phi^0 \subset \fM_\phi$ can be connected by a sequence of twistor lines \[p \in L_1, L_2, \dots, L_m \ni q\] with points 
    \[(X_i,\marking_{X_i},Y_i,\marking_{Y_i}) \in L_i \cap L_{i+1}\]
    for $1 \leq i \leq m-1.$ Moreover, we may assume that $\Pic(X_i) = 0$. 
\end{proposition}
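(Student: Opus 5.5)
The plan is to show that, on the connected component $\fM_\phi^0$, the relation ``connected by a chain of diagonal twistor lines whose intermediate intersection points satisfy $\Pic(X_i)=0$'' has open equivalence classes. Since $\fM_\phi^0$ is connected and the classes partition it, all points then lie in a single class. Throughout I use that the projection $\fM_\phi \to \fM_\Lambda$, $(X,\marking_X,Y,\marking_Y)\mapsto (X,\marking_X)$, is a local homeomorphism. Indeed, the period of $Y$ is forced to be $\phi(\cP(X,\marking_X))$, and by local Torelli $Y$ deforms uniquely along with $X$. Composing with $\cP$, the period map $\fM_\phi^0 \to \Omega_\Lambda$ is therefore a local homeomorphism.

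\textbf{Step 1 (reaching a very general point).} Let $q=(X,\marking_X,Y,\marking_Y)\in\fM_\phi^0$. By definition of $\fM_\phi$ there is a Kähler class $\omega_X$ whose image $\omega_Y=(\marking_Y^{-1}\circ\phi\circ\marking_X)(\omega_X)$ is Kähler. The associated diagonal twistor line passes through $q$. Its periods form a conic in $\Omega_\Lambda$, and the points of the conic with $\Pic\neq 0$ are countably many, since each lies on one of countably many hyperplanes $\lambda^\perp$. So the line contains points $q'$ with $\Pic(X')=0$, and these are dense on it. Hence every point is linked by a single diagonal twistor line to a point with trivial Picard group, and it suffices to prove openness of classes at such points.

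\textbf{Step 2 (local connectivity at $\Pic=0$ points).} Let $q'$ satisfy $\Pic(X')=0$. Then $\Pic(Y')\otimes\bbQ=\phi(\Pic(X')\otimes\bbQ)=0$ as well. For a hyperkähler manifold with trivial Picard group there are no wall classes, so the Kähler cone equals the positive cone. Consequently, $\phi$ sends the positive cone of $X'$ onto one of the two components of the positive cone of $Y'$. It is the Kähler component, because $\phi$ maps some Kähler class to a Kähler class; this orientation condition is locally constant on $\fM_\phi^0$. So at every point with $\Pic=0$, every class in the positive real $3$-space $\langle\operatorname{Re}\sigma,\operatorname{Im}\sigma,\omega\rangle$ determines a diagonal twistor line in $\fM_\phi$, and not merely a twistor line of $X'$.

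I would now run Huybrechts' argument for twistor-path connectivity of the period domain inside a small neighbourhood $U$ of $q'$ on which the period map to $\Omega_\Lambda$ is a homeomorphism onto an open set. Any point of $U$ is joined to $q'$ by a chain of two twistor conics meeting at a point whose period avoids all hyperplanes $\lambda^\perp$, hence has $\Pic=0$. Each conic is the period image of a diagonal twistor line starting from a $\Pic=0$ point, using the previous paragraph. Lifting through the local homeomorphism, the lifted lines stay in $U$ and agree with the actual diagonal twistor lines, because twistor lines are determined by their period conic once the starting point is fixed. This shows that the class of $q'$ contains $U$, so classes are open.

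\textbf{Main obstacle.} The delicate point is Step 2. First, one must guarantee that the intermediate intersection points can be chosen very general, i.e.\ that the family of conics used in Huybrechts' argument meets the countable union of hyperplanes only in a meagre set. Second, one must ensure that the lifted conics remain in $\fM_\phi^0$ despite non-Hausdorffness. For the first, I would choose the connecting $3$-spaces generic among an open family; for the second, I would work only with lifts starting from $\Pic=0$ points, where the Kähler cone condition is automatic and inseparable points do not occur.
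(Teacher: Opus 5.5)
The paper does not prove this statement; it quotes it from Markman. Your route is the standard one: the period map of $\fM_\phi^0$ is a local homeomorphism; at points with $\Pic=0$ the Kähler cone equals the positive cone, so every twistor conic through such a point lifts to a diagonal twistor line; then Huybrechts-style local chains and connectedness. As written, however, the global step has a gap.

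Your relation is not transitive: gluing two admissible chains at a junction with $\Pic\neq 0$ gives a chain that is not admissible. Step 1 also does not reduce openness to points with $\Pic=0$. A class is open only if it contains a neighbourhood of each of its points. Step 2 only gives some neighbourhood $U_{q'}$ of each point $q'$ with $\Pic=0$, with no control on its size as $q'$ approaches a point $p$ with $\Pic\neq 0$. So $p$ need not lie in any $U_{q'}$, your open sets need not cover $\fM_\phi^0$, and connectedness gives nothing.

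The fix is to prove the local statement at an arbitrary point $p$. Take $V$ inside a neighbourhood of $p$ on which the period map is injective, small enough that any two $\Pic=0$ points of $V$ are joined by an admissible chain whose arcs between consecutive junctions stay in that neighbourhood. Your Step 2 argument gives this, since it only needs each line to start at a $\Pic=0$ point. Restricted to $\Pic=0$ points the relation is transitive. Since $\Pic=0$ points are dense, the class of the $\Pic=0$ points of $V$ is a locally constant function of $p$, hence constant on $\fM_\phi^0$. Step 1 then attaches arbitrary endpoints.

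Three local claims also need correcting.

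(a) Two conics do not suffice. View periods as oriented positive planes $P\subset\Lambda_\bbR$; two of them lie on a common conic iff their sum is positive definite. So a two-step chain from $P$ to $P'$ with $P\cap P'=0$ needs a middle plane $\langle u,u'\rangle$, with $u\in P$, $u'\in P'$ and $P+\bbR u'$ positive. If $P'$ is the graph of a small injective map $P\to N$ into a negative definite $N\subset P^\perp$, this never happens, and such $P'$ exist arbitrarily close to $P$. Three conics do suffice. Write $P'=\langle e_1+n_1,e_2+n_2\rangle$ with $n_i\in P^\perp$ and pick a positive $\omega\in P^\perp$. Then $P\to\langle e_1+n_1+\delta\omega,e_2\rangle\to\langle e_1+n_1+\delta\omega,e_2+n_2+\delta'\omega\rangle\to P'$ works for suitable $\delta,\delta'$ comparable to the $n_i$, and perturbing $\omega$ makes the junctions generic.

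(b) In Step 1, suppose some $0\neq\lambda\in\Pic(X)$ is orthogonal to $\omega_X$. Then $\lambda$ is orthogonal to the whole $3$-space $\langle\operatorname{Re}\sigma,\operatorname{Im}\sigma,\omega_X\rangle$, and every point of the line has $\Pic\neq 0$. This happens, for example, for rational $\omega_X$ when $\rho(X)\geq 2$. You must choose $\omega_X$ off these countably many hyperplanes. That is possible because the admissible $\omega_X$, those with Kähler image, form a nonempty open cone.

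(c) Twistor lines never stay inside a small $U$, since each conic contains the conjugate period. What you need is that the arcs between consecutive junctions stay in $U$, which is why the junctions must be chosen near the starting point. Continuity of the actual diagonal twistor line and injectivity of the period map on $U$ then identify it with your local lift, and non-Hausdorffness causes no trouble.
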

Following \cite{markmanalgebraic}, we call such a path joining two points $p, q \in \fM_\phi$ a \textit{generic twistor path}. 
\subsubsection*{Twisted hyperholomorphic vector bundles on products}
In good cases, twisted vector bundles can be made to deform over generic twistor paths to any other point in the moduli space $\fM_\phi$ as a consequence of Verbitsky's theory of hyperholomorphic sheaves. Fix $X,Y$ primitive hyperk\"ahler manifolds, and suppose $\cE$ is a $(-\mathrm{pr}_1^*\alpha + \mathrm{pr}_2^*\beta)$-twisted vector bundle for $\alpha \in \Br(X),$ $\beta \in \Br(Y)$ which induces a derived equivalence 
\[\Phi_\cE  \colon \Db(X,\alpha) \to \Db(Y,\beta).\]
In particular, $\rank(\cE) > 0.$ We will proceed with a slightly reworded variant of the assumption \cite[Assumption $(\dagger)$]{bottinitenfolds}.
\begin{assumption} \label{ass:llvequiv}
    The Brauer classes $\alpha \in \Br(X)$ and $\beta \in \Br(Y)$ are topologically trivial, and respectively admit B-field lifts $B_\alpha \in \rH^2(X, \mathbb Q) $ and $B_\beta \in \rH^2(Y, \mathbb Q)$ such that the Hodge isometry 
    \[\Phi^{\rH}_\cE \colon \rH^*(X,B_\alpha,\bbQ) \to \rH^*(Y,B_\beta,\bbQ)\]
    is LLV-equivariant. 
\end{assumption}
By Corollary \ref{cor:poincarellv}, Assumption \ref{ass:llvequiv} is satisfied for the twisted Poincar\'e sheaves arising from Tate--Shafarevich we consider in this paper. \par 
Under this assumption, Theorem \ref{thm:taelman} and Proposition \ref{prop:extmukaiprop}\ref{proppart:extmukaihodge} ensures that there is a Hodge isometry 
\[\Phi^{\rHt}_\cE \colon \rHt(X,B_\alpha,\bbQ) \to \rHt(Y,B_\beta,\bbQ)\]
between twisted extended Mukai lattices. Let us define the class
\[\kappa(\cE) \coloneq \exp\left(-\frac{c_1^B(\cE)}{\rank(\cE)}\right)\cdot \ch^B(\cE),\]
which one can check is independent of the choice of B-field $B$. Note that \[-c_1^B(\cE)/\rank(\cE) \in \rH^2(X\times Y,\bbQ)\] is a B-field which differs from $B$ by a class in $\rH^{1,1}(X\times Y,\bbQ)$, so in particular $\kappa(\cE)$ is Hodge. Under the K\"unneth decomposition $\rH^2(X \times Y,\bbQ) \simeq \rH^2(X,\bbQ) \oplus \rH^2(Y,\bbQ)$ we may write 
\begin{equation} \label{eqn:newBfields}
    -c_1^B(\cE)/\rank(\cE) = -\mathrm{pr}_1^*B_\alpha' + \mathrm{pr}_2^*B_\beta'
\end{equation}
for two B-fields $B_\alpha'$ and $B_\beta'$ representing $\alpha$ and $\beta$ respectively. 
\begin{lemma}[{\cite[Lemma 4.1(1) and (2)]{markmanalgebraic}}] \label{lem:posrankdegreversing}
    Under Assumption \ref{ass:llvequiv}, we may associate to $\cE$ an LLV-equivariant isometry 
    \[\Psi_\cE \coloneq \mathrm{pr}_{2,*}\left(\sqrt{\mathrm{td}(X\times Y)}\cdot \kappa(\cE)\cdot\mathrm{pr}_1^*(-)\right) \colon \rH^*(X,\bbQ) \to \rH^*(Y,\bbQ),\] which fits into a commutative diagram 
    \begin{equation}
        \begin{tikzcd}
            \rH^*(X,\bbQ) \arrow[r,"\Psi_\cE"] \arrow[d,"\cdot\exp(-B_\alpha')"'] &  \rH^*(Y,\bbQ) \arrow[d,"\cdot\exp(-B_\beta')"] \\
            \rH^*(X,B_\alpha,\bbQ)\arrow[r,"\Phi_{\cE}^{\rHt}"] & \rH(Y,B_\beta,\bbQ)
        \end{tikzcd}
    \end{equation}
    and is both degree-reversing and Hodge. In particular, the induced map 
    \[\widetilde\Psi_\cE \colon \rHt(X,\bbQ) \to \rHt(Y,\bbQ)\]
    is also degree-reversing and Hodge. 
\end{lemma}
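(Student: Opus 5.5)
The plan is to reduce everything to the untwisted-versus-twisted comparison in the diagram, and then read off degree-reversal from the Mukai vector of a single fiber.

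\emph{Step 1: the diagram and LLV-equivariance.} By \eqref{eqn:newBfields}, $-c_1^B(\cE)/\rank(\cE)$ differs from the B-field $B=-\mathrm{pr}_1^*B_\alpha+\mathrm{pr}_2^*B_\beta$ by $-\mathrm{pr}_1^*(B_\alpha'-B_\alpha)+\mathrm{pr}_2^*(B'_\beta-B_\beta)$. I would use the transformation rule for twisted Chern characters, $\ch^{B+\delta}=\exp(\delta)\ch^B$ up to the convention fixed in \cite{huybrechtsstellari}. This expresses $\kappa(\cE)$ as
\[\kappa(\cE)=\mathrm{pr}_1^*\exp(\pm(B_\alpha'-B_\alpha))\cdot\mathrm{pr}_2^*\exp(\pm(B_\beta'-B_\beta))\cdot\ch^B(\cE),\]
with the signs fixed so that they match the vertical arrows of the diagram. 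The projection formula for $\mathrm{pr}_{2,*}$ then shows that $\Psi_\cE$ equals $\Phi^{\rH}_\cE$ pre- and post-composed with multiplication by exponentials of classes in $\rH^2$. This is exactly the commutativity of the square. Each such exponential is an LLV-equivariant isometry by Example~\ref{ex:explambdallv}, and $\Phi^\rH_\cE$ is LLV-equivariant by Assumption~\ref{ass:llvequiv}. Hence $\Psi_\cE$ is an LLV-equivariant isometry, and Theorem~\ref{thm:taelman} yields $\widetilde\Psi_\cE$.

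\emph{Step 2: Hodge.} The class $\kappa(\cE)$ is a Hodge class, as noted before the statement, and $\sqrt{\mathrm{td}(X\times Y)}$ is algebraic. So $\Psi_\cE$ is a correspondence given by a Hodge class and is a morphism of Hodge structures. Proposition~\ref{prop:extmukaiprop}\ref{proppart:extmukaihodge} then gives that $\widetilde\Psi_\cE$ is Hodge.

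\emph{Step 3: degree-reversing (main obstacle).} This is where the normalisation $c_1(\kappa(\cE))=0$ must be used.
\begin{itemize}
\item[(a)] Compute $\Psi_\cE$ on the point class $[x]\in\rH^{4n}(X,\bbQ)$. It equals $\sqrt{\mathrm{td}(Y)}\cdot\kappa(\cE)|_{\{x\}\times Y}$, whose degree-$0$ part is $\rank(\cE)\neq0$ and whose degree-$2$ part vanishes, because $c_1(\kappa(\cE))=0$ and $\mathrm{td}$ has no degree-$2$ term on a hyperkähler manifold.
\item[(b)] Transport this to the extended Mukai lattice. Under $\Psi$ and the commuting square of Theorem~\ref{thm:taelman}, the point class in $\mathrm{SH}^*(X)$ corresponds to $f^{\,n}$ up to scalar. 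So $\widetilde\Psi_\cE(f)=re+\mu+sf$ with $r\neq0$.
\item[(c)] Show $\mu=0$. Since $(\widetilde\Psi_\cE(f))^{n}$ has no component in $e^{n-1}\mu$, we get $\mu=0$.
\item[(d)] Show $s=0$. Isotropy of $f$ gives $0=(re+sf)^2=-2rs$, so $s=0$. Thus $\widetilde\Psi_\cE(f)\in\bbQ e$.
\item[(e)] Apply the same argument to the transposed kernel, i.e.\ the inverse equivalence with kernel $\cE^\vee$ up to shift, whose $\kappa$ also has $c_1=0$. This gives $\widetilde\Psi_\cE^{-1}(f)\in\bbQ e$, i.e.\ $\widetilde\Psi_\cE(e)\in\bbQ f$.
\item[(f)] Being an isometry, $\widetilde\Psi_\cE$ maps $\rH^2(X,\bbQ)=\langle e,f\rangle^\perp$ to $\rH^2(Y,\bbQ)$. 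So $\widetilde\Psi_\cE$ is degree-reversing.
\end{itemize}

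\emph{Step 4: degree-reversal of $\Psi_\cE$ itself.} Degree-reversal of $\widetilde\Psi_\cE$ implies that $\Psi_\cE$ is degree-reversing on $\mathrm{SH}^*$. To pass to all of $\rH^*$, I would use the converse direction of \cite[Lemma 3.4]{markmanalgebraic} (Proposition~\ref{prop:extmukaiprop}\ref{proppart:extmukaigrading}). That is, LLV-equivariance forces $\Psi_\cE$ to intertwine the grading operators $h$, which lie in $\fg$ and are determined by $\widetilde\Psi_\cE$ via the $\mathfrak{so}(\rHt)$-action. Hence $\Psi_\cE$ conjugates $h$ to $-h$, which is the degree-reversing property.

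I expect Step~3, especially the bookkeeping identifying the point class with $f^n$ and the sign conventions, to be the delicate part.
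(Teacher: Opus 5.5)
Your proposal is correct and is essentially the argument the paper defers to, namely Markman's proof of \cite[Lemma 4.1]{markmanalgebraic} run with $\ch^B$ and $c_1^B$ in place of $\ch$ and $c_1$: the point class goes to the Mukai vector of a fiber, $c_1(\kappa(\cE))=0$ together with isotropy forces $\widetilde\Psi_\cE(f)\in\bbQ e$, the inverse kernel handles $e$, and conjugating $h$ lifts degree-reversal back to $\rH^*$. One bookkeeping slip in Step 1: no transformation rule for $\ch^{B+\delta}$ is needed and the differences $B'-B$ should not appear, because by definition and \eqref{eqn:newBfields} one has $\kappa(\cE)=\mathrm{pr}_1^*\exp(-B'_\alpha)\cdot\mathrm{pr}_2^*\exp(B'_\beta)\cdot\ch^B(\cE)$, so the projection formula gives $\Psi_\cE=\exp(B'_\beta)\cdot\Phi^{\rH}_{\cE}\bigl(\exp(-B'_\alpha)\cdot(-)\bigr)$, which is exactly the stated square (your version with $B'-B$ would not match the vertical arrows, though the LLV-equivariance conclusion is unaffected).
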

\begin{proof}
    Strictly speaking, this is proved in \cite[Lemma 4.1]{markmanalgebraic} when $\cE$ is an untwisted derived equivalence. However, under Assumption \ref{ass:llvequiv} the proof holds without change for twisted sheaves after replacing $c_1$ with $c_1^B$ and $\ch$ with $\ch^B$ wherever they appear. 
\end{proof}
The key insight of \cite{markmanalgebraic} was that, if the kernel $\cE$ is stable with respect to polarizations on $X \times Y$ which are suitably compatible with the isometry $\Psi_\cE$, then $\cE$ can be deformed over generic twistor paths. In our setting, this can be stated as follows.
\begin{theorem}[{\cite[Propositions 5.19 and 5.21]{markmanalgebraic}}] \label{thm:hyperholdef}
    Fix a constant $C = \pm 1$. Under Assumption \ref{ass:llvequiv}, let \[\psi \coloneq \widetilde\Psi_\cE|_{\rH^2(X,\bbQ)} \colon \rH^2(X,\bbQ) \to \rH^2(Y,\bbQ).\] For fixed markings $\marking_X$ for $X$ and $\marking_Y$ for $Y$, let $\phi \coloneq C\cdot \marking_Y \circ \psi\circ\marking_X^{-1}$, and let $\fM_\phi^0$ be the connected component of $\fM_\phi$ containing $(X,\marking_X,Y,\marking_Y)$. Suppose that there exists an open subcone $\cC_X \subset \mathrm{Kah}(X)$ such that for any $\omega_X \in \cC_X$, its image $\omega_Y \coloneq C\psi(\omega_X) \in \mathrm{Kah}(Y)$, and such that $\cE$ is slope-stable with respect to $\mathrm{pr}_1^*\omega_X + \mathrm{pr}_2^*\omega_Y$.  Then for any $(X',\marking_{X'},Y',\marking_{Y'}) \in \fM_\phi^0$ there exists a twisted vector bundle $\cE'$ deformation equivalent to $\cE$.
\end{theorem}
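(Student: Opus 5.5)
The plan is to follow Markman's argument in \cite[Section 5]{markmanalgebraic} and reduce everything to Verbitsky's theory of hyperholomorphic bundles. The Brauer twist is handled by passing to the untwisted bundle $\mathcal{E}nd(\cE) = \cE^\vee \otimes \cE$. This bundle is an honest holomorphic vector bundle on $X \times Y$. It has $c_1 = 0$, and its characteristic classes are determined by $\kappa(\cE)$: indeed $\ch(\mathcal{E}nd(\cE)) = \kappa(\cE)^\vee \cdot \kappa(\cE)$, since the twisting by $\exp(-c_1^B/\rank)$ cancels. Slope stability of $\cE$ with respect to $\omega := \mathrm{pr}_1^*\omega_X + \mathrm{pr}_2^*\omega_Y$ implies that $\mathcal{E}nd(\cE)$ is $\omega$-polystable. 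Moreover, a twisted bundle is recovered from its endomorphism algebra, an Azumaya algebra, so it suffices to deform $\mathcal{E}nd(\cE)$ as an algebra together with its polystable hyperholomorphic structure.

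\emph{Step 1: invariance of characteristic classes under the diagonal twistor action.} Let $\mathfrak{so}(4)$ (or $\mathfrak{su}(2)$) be the Lie algebra generated by the hyperkähler triples of $(X,\omega_X)$ and $(Y,\omega_Y)$. The claim is that $\kappa(\cE)$, and hence $\ch(\mathcal{E}nd(\cE))$, is invariant under the diagonal $\mathfrak{su}(2)$ attached to the Kähler class $\omega$ on $X \times Y$. By Lemma~\ref{lem:posrankdegreversing}, $\kappa(\cE)$ corresponds under Künneth and Poincaré duality to the LLV-equivariant, degree-reversing Hodge isometry $\Psi_\cE$. We have $\widetilde\Psi_\cE$ on $\rH^2$ equal to $\psi$ with $\psi(\omega_X) = C\omega_Y$, and $\psi$ maps $\operatorname{Re}\sigma_X, \operatorname{Im}\sigma_X$ to multiples of $\sigma_Y$-classes since it is Hodge. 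Then equivariance shows that $\Psi_\cE$ intertwines the $\mathfrak{su}(2)$-action for $(X,\omega_X)$ with the one for $(Y,\omega_Y)$, up to the sign/duality $C$ coming from degree reversal. An intertwiner between two representations, viewed as a class on the product via $\rH^*(X)^\vee \otimes \rH^*(Y)$, is an invariant vector for the diagonal action. Therefore $\kappa(\cE)$ is of type $(p,p)$ on every fiber of the diagonal twistor family. I expect this step to be the main obstacle, specifically keeping track of the sign $C$ and of the Poincaré-duality twist, so that it is the diagonal action on $X \times Y$ and not an anti-diagonal one that fixes the class. I would first check it on $\rHt$ using Example~\ref{ex:explambdallv}-style explicit formulas.

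\emph{Step 2: deform along one diagonal twistor line.} By Verbitsky's theorem, an $\omega$-polystable bundle whose $c_1$ and $c_2$ are invariant under the induced $\mathfrak{su}(2)$ carries a hyperholomorphic connection. Step 1 supplies exactly this invariance for $\mathcal{E}nd(\cE)$. Hence the bundle extends to the diagonal twistor family $\cX \times_{\bbP^1} \cY \to \bbP^1$, compatibly with its algebra structure. The algebra stays Azumaya on every fiber, so on each fiber it yields a twisted bundle $\cE_t$. Along the line this $\cE_t$ is polystable, and it is stable at a very general point by openness of stability. The isometry $\Psi_{\cE_t}$ is the parallel transport of $\Psi_\cE$, so Assumption~\ref{ass:llvequiv} persists.

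\emph{Step 3: concatenate along a generic twistor path.} Given a target point $(X',\marking_{X'},Y',\marking_{Y'}) \in \fM^0_\phi$, choose a generic twistor path as in \cite[Lemma 5.15]{markmanalgebraic}, whose intermediate points satisfy $\Pic(X_i)=0$, and hence $\Pic(Y_i)=0$ via $\psi$. At such a point, Kähler classes of the form $\omega_{X_i} + \psi(\omega_{X_i})$ fill out the whole relevant cone. Since $\rH^{1,1}(X_i \times Y_i,\bbQ)$ contains no classes from $\rH^2$ of the factors, there are no walls for slope stability of the twisted bundle in the relevant sense: any destabilizing subsheaf would have to have $c_1$ proportional to $c_1^B(\cE_i)$. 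Thus stability of $\cE_i$ with respect to one such class, obtained in Step 2, gives stability for the class defining the next line $L_{i+1}$. The hypothesis on $\cC_X$ is needed only at the starting point. Iterating Step 2 along $L_1, \dots, L_m$ produces the desired $\cE'$ on $X' \times Y'$, deformation equivalent to $\cE$. The target itself need not be generic, and there one only gets a deformation of the bundle, possibly not stable, which is all that is claimed.
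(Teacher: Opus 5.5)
Your proposal is correct and follows the paper's route. The paper just transplants Markman's Propositions 5.19 and 5.21, using twisted Chern classes and Assumption~\ref{ass:llvequiv} in place of Taelman's Theorem A: Hodge-invariance of $\kappa(\cE)$ along diagonal twistor lines comes from LLV-equivariance, Verbitsky's theorem is applied to the polystable Azumaya algebra $\mathcal{E}nd(\cE)$, and one concatenates along generic twistor paths through points with trivial Picard group. The sign $C$ works out as you hope, since conjugation by $\widetilde\Psi_\cE$ sends the Weil operators of $(I,J,K)$ on $X$ to those of $(I,CJ,CK)$ on $Y$, which is exactly how the diagonal twistor line pairs fibers; and stability of $\cE_t$ at every point of a line, in particular at the junctions, should come from constancy of $\rH^0(\mathcal{E}nd(\cE_t))$ (holomorphic endomorphisms are parallel) rather than from openness of stability.
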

\begin{proof}
    After replacing Chern classes and Chern characters with their twisted variants and replacing the application of \cite[Theorem A]{taelman} in \cite[Proposition 5.21]{markmanalgebraic} with Assumption~\ref{ass:llvequiv}, the proof in the untwisted case goes through without change. Note that the results \cite[Corollary 6.12, Proposition 6.17, Lemma 7.2]{markmanbbclass} used as input to \cite[Proposition 5.19]{markmanalgebraic} are proved already in the twisted case. 
\end{proof}
Twisted vector bundles of the type arising in Theorem \ref{thm:hyperholdef} above are called \textit{hyperholomorphic}\footnote{Here we use the terminology loosely. Elsewhere in the literature, such a bundle would be called \textit{$\omega$-stable projectively hyperholomorphic} for $\omega = \mathrm{pr}_1^*\omega_X + \mathrm{pr}_2^*\omega_Y \in \mathrm{Kah}(X \times Y)$ for any $\omega_X \in \cC_X$ and $\omega_Y = C\cdot \psi(\omega_X)$. Since we are not fixing a particular K\"ahler class and since our vector bundle is already twisted, we choose to drop the additional verbiage.}. Combined with results of Kapustka--Kapustka \cite{kapustkaconstruction}, this shows that the existence of a derived equivalence from a twisted hyperholomorphic bundle at a single point of $\fM_\phi^0$ implies the existence of such a derived equivalence at any other point. 
\begin{theorem}[{\cite[Theorem 2.3]{kapustkaconstruction}}] \label{thm:hyperholequivalence}
    The vector bundle $\cE'$ of Theorem \ref{thm:hyperholdef} induces a twisted derived equivalence \[\Db(X',\alpha') \to \Db(Y',\beta')\] 
    for Brauer classes $\alpha' \in \Br(X')$ and $\beta' \in \Br(Y')$.
\end{theorem}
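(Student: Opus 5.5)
The plan is to deform the property of being an equivalence along a generic twistor path, starting from the original kernel $\cE$. I will phrase the property through the convolution kernel rather than through points. Let $p=(X,\marking_X,Y,\marking_Y)$ and $q=(X',\marking_{X'},Y',\marking_{Y'})$ lie in $\fM_\phi^0$, and connect them by a generic twistor path $L_1,\dots,L_m$ as in \cite[Lemma 5.15]{markmanalgebraic}. By Theorem \ref{thm:hyperholdef}, over each diagonal twistor line $L_i$ there is a family of twisted bundles $\cE_t$ on $\cX_t\times\cY_t$. These families glue at the intersection points, and at $p$ we have $\cE_p=\cE$.

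Over each $L_i$, form the relative convolution
\[
\cK_t \coloneq \cE_t^{\vee}\circ\cE_t \in \Db(\cX_t\times\cX_t),
\]
where $\cE_t^\vee$ denotes the kernel of the right adjoint, namely $\cE_t^\vee\otimes\mathrm{pr}^*\omega[\dim]$, with trivial canonical bundle. Take the adjunction morphism $\cK_t\to\cO_{\Delta_t}$. The relevant Brauer twists cancel on $X\times X$, so this is an untwisted object. Let $\mathcal{C}_t$ be its cone.

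Since the family is proper over $L_i\simeq\bbP^1$, the union of the supports of the $\mathcal{C}_t$ is a closed analytic subset of the total space. Its image $T_i\subset L_i$ is therefore a closed analytic subset. The complement of $T_i$ is exactly the set of $t$ for which $\Phi_{\cE_t}$ is fully faithful. At any point of $L_i\setminus T_i$, full faithfulness upgrades to an equivalence, because both sides are Calabi--Yau of equal dimension. This is Bridgeland's argument, in the twisted form of Canonaco--Stellari. The same argument, with the roles of $X$ and $Y$ exchanged, treats the other composite.

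By induction along the path, it then suffices to show that if $T_i\neq L_i$ then $T_i=\emptyset$. That is, the bad locus cannot consist of finitely many isolated points. The base case is supplied by the starting point: at $p$ the functor is an equivalence by hypothesis, so $T_1\neq L_1$. Since consecutive lines meet in a point where $\Phi$ is an equivalence, the induction propagates all the way to $q$.

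For the key step I would use Bridgeland's pointwise criterion at every $t\in L_i$, not merely generic $t$. Here $E_{x}\coloneq\Phi_{\cE_t}(k_x)=\cE_t|_{\{x\}\times Y_t}$.
\begin{enumerate}
\item The restrictions $E_x$ are slope-stable hyperholomorphic bundles. This follows from the stability in Theorem \ref{thm:hyperholdef}, which is preserved along diagonal twistor lines. Hence $\mathrm{Hom}(E_x,E_x)=\bbC$.
\item By Serre duality, $\mathrm{Ext}^{\dim}(E_x,E_y)=\mathrm{Hom}(E_y,E_x)^\vee$.
\item For $x\neq y$, the groups $\mathrm{Hom}(E_x,E_y)$ vanish by stability once $E_x\not\simeq E_y$. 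This non-isomorphism is an open and closed condition in the family, because the classifying map $X_t\to M(Y_t)$ is a closed embedding at $t\in L_i\setminus T_i$. It therefore persists over all of $L_i$.
\end{enumerate}

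The remaining condition, and the one I expect to be the main obstacle, is the vanishing of the intermediate groups $\mathrm{Ext}^j(E_x,E_y)$ for $0<j<\dim$ and $x\neq y$, at the finitely many special points of $T_i$. Upper semicontinuity goes the wrong way here. My first attempt would exploit that the fibers over generic points of a generic twistor path have $\Pic=0$, so $\cX_t\times\cX_t$ has very few analytic subvarieties. The support of $\mathcal{C}_t$ is a closed analytic subset. It must be disjoint from the diagonal, by the computation of $\mathrm{Ext}^*(E_x,E_x)$ via the Hodge-theoretic isometry of Lemma \ref{lem:posrankdegreversing}. It also has to be compatible with Verbitsky's description of trianalytic subvarieties. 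Together these should force it to be empty.

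If that fails, the fallback is to compute the Mukai pairing $\chi(\cK_t,\cO_{\Delta_t}\otimes -)$ and the cohomological class of $\cK_t$. This class is constant in the family and equals that of $\cO_\Delta$. A nonzero $\mathcal{C}_t$ supported off the diagonal would then have vanishing class, which contradicts the positivity coming from its hyperholomorphic structure.
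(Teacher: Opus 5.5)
Your frame (relative convolution, closed bad locus $T_i$, and Bridgeland plus Canonaco--Stellari to upgrade full faithfulness to an equivalence) is fine. The gap is the step that carries all the content, which you leave open: you never show that $\mathrm{Ext}^j(E_x,E_y)$ vanishes for $0<j<\dim Y$ at points of $T_i$, and neither proposed fix can work.

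\begin{itemize}
\item The $\Pic=0$ heuristic is unavailable exactly where it is needed. The endpoint $q$ may be projective, and each twistor line has countably many points where the Picard group jumps. Moreover, even when $\Pic(X_t)=0$, a finite set of pairs $(x,y)$ is a closed, even trianalytic, subset of $X_t\times X_t$. So no rigidity of subvarieties can force $\mathrm{supp}\,\mathcal{C}_t=\emptyset$.
\item The fallback through the class of $\mathcal{K}_t$ cannot detect the problem. When $\mathrm{Ext}^j$ jumps by semicontinuity, an adjacent $\mathrm{Ext}^{j\pm1}$ jumps with it, keeping $\chi$ constant. So $\mathcal{C}_t$ can be a complex supported at finitely many points whose cohomology classes cancel. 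It is not a sheaf, let alone a hyperholomorphic one, so there is no positivity to invoke.
\item Items (i) and (iii) have the same defect. Theorem~\ref{thm:hyperholdef} gives stability of $\cE_t$ on the product, not of the restrictions $E_x$. The locus where $E_x\simeq E_y$ is closed by semicontinuity of $\mathrm{Hom}$, so non-isomorphism is only an open condition. The closed-embedding property at good $t$ says nothing at $t\in T_i$.
\end{itemize}

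The missing idea is the one underlying the Kapustka--Kapustka argument that the paper invokes. This is Verbitsky's theorem: for a polystable hyperholomorphic bundle $F$ on a hyperk\"ahler manifold, $\dim H^j(F)$ is the same for every complex structure in the twistor family, because the cohomology is computed by harmonic forms for the hyperk\"ahler metric. Since $\{x\}\times Y$ is trianalytic, the restrictions $E_x,E_y$ are (projectively) hyperholomorphic. Apply the theorem to $F=\cH om(E_x,E_y)$. It shows that $\dim\mathrm{Ext}^j(E_x,E_y)=h^j(\cH om(E_x,E_y))$ is constant along each twistor line, for every $j$ and every pair $x,y$. Taking $x=y$ also propagates simplicity, hence stability, of $E_x$. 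Bridgeland's conditions therefore hold identically along a line once they hold at one point, and you simply walk along the generic twistor path; no analysis of $T_i$ is needed.

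The one point specific to the twisted setting, which is what the paper actually supplies, is that $\cH om(E_x,E_y)$ is an honest polystable bundle even though $E_x$ and $E_y$ are twisted. The steps are:
\begin{itemize}
\item $\cE nd(E_x)$ and $\cE nd(E_y)$ are polystable by \cite[Proposition 6.6]{markmanbbclass}.
\item Hence $\cE nd(E_x)\otimes\cE nd(E_y)\simeq\cH om(E_x,E_y)\otimes\cH om(E_y,E_x)$ is polystable by \cite[Lemma 9.1]{markmanbbclass}.
\item Polystability of each tensor factor is then deduced from that of the product.
\end{itemize}
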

\begin{proof}
    Let $\mu_\omega$ denote the slope with respect to a K\"ahler class $\omega$. The proof in \cite[Theorem 2.3]{kapustkaconstruction} goes through essentially without change; the only subtlety of the proof where $\cE$ is twisted is showing that \[\cH om(\cE|_{\{p\}\times Y}, \cE|_{\{q\}\times Y}) \qquad \text{ and } \qquad \cH om(\cE|_{\{q\}\times Y},\cE|_{\{p\}\times Y})\] are $\mu_\omega$-polystable with respect to some $\omega$ when $\cE|_{\{p\}\times Y},\cE|_{\{q\}\times Y}$ are $\mu_\omega$-stable for any two points $p,q \in X$. Replacing the usage of \cite[Lemma 9.1]{markmanbbclass} with \cite[Proposition 6.6]{markmanbbclass}, one sees that the untwisted vector bundles
    \[\cE nd(\cE|_{\{p\}\times Y}),\qquad \cE nd(\cE|_{\{q\}\times Y})\]
    are $\mu_\omega$-polystable. Using now \cite[Lemma 9.1]{markmanbbclass} one shows that 
    \[\cE nd(\cE|_{\{p\}\times Y}) \otimes \cE nd(\cE|_{\{q\}\times Y}) \simeq \cH om(\cE|_{\{p\}\times Y}, \cE|_{\{q\}\times Y}) \otimes \cH om(\cE|_{\{q\}\times Y},\cE|_{\{p\}\times Y})\]
    is $\mu_\omega$-polystable. It is then enough to observe that for two indecomposable vector bundles $\cF, \cG$ such that $\cF \otimes \cG$ is $\mu_\omega$-polystable, $\cF$ must be $\mu_\omega$-stable. It is easy to see that for any destabilizing subsheaf $\cH \subset \cF$ will also destabilize $\cF \otimes \cG$, so $\cF$ is $\mu_\omega$-stable. Indeed, we may take $\cH \subset \cF$ with slope $\mu_{\omega}(\cH) = \mu_\omega(\cF)$, so that the $\mu_\omega$-polystability of $\cF \otimes \cG$ would force the inclusion $\cH \otimes \cG \subset \cF \otimes \cG$ to split, which in characteristic 0 implies the original extension splits, contradicting the indecomposability of $\cF$. \par 
    The remainder of the proof goes through without change. 
\end{proof}
The Brauer classes appearing in the derived equivalences of the deformations are determined by a twisted variant of a result of C\u ald\u araru.
\begin{proposition}[{\cite[Theorem 4.1]{caldararu}}] \label{prop:bfielddeformationprop}
    Consider a proper, smooth morphism $\cX \to S$ of complex analytic spaces. Suppose $\cE$ is a locally free $\alpha$-twisted sheaf on $\cX$ for some $\alpha \in \Br(\cX)$, and let $B \in \rH^2(\cX_0,\bbQ)$ be a B-field lift for $\alpha_0 \in \Br(\cX_0)$. Assume $S \ni 0$ is small enough that we are given an identification $\rH^i(\cX,\bbZ) \simeq \rH^i(\cX_0,\bbZ)$. Then 
    \[\alpha = [-c_1^B(\cE_0)/\rank(\cE_0)],\] 
    where $[-] \colon \rH^2(\cX,\bbQ) \to \Br(\cX)$ gives the underlying Brauer class of a B-field. 
\end{proposition}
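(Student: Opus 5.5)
The plan is to reduce the statement to the classical description of the Brauer class of a projective bundle in terms of the first Chern class of a topological lift, and then to use the smallness of $S$ to move between $\cX$ and $\cX_0$.

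First, since $S$ is small enough that $\rH^i(\cX,\bbZ)\simeq\rH^i(\cX_0,\bbZ)$, the B-field $B\in\rH^2(\cX_0,\bbQ)$ extends uniquely to a class $\widetilde B\in\rH^2(\cX,\bbQ)$. I will check via the exponential sequence $\rH^2(\cX,\bbQ)\to\rH^2(\cX,\cO_\cX)\to\rH^2(\cX,\cO_\cX^*)$ that $\widetilde B$ is a B-field lift of $\alpha$. Both $\alpha$ and $[\widetilde B]$ are torsion classes restricting to $\alpha_0$. For $S$ small (e.g. contractible Stein), restriction on torsion classes $\Br(\cX)\to\Br(\cX_0)$ is controlled by $\rH^2(\cX,\bbQ/\bbZ)\simeq\rH^2(\cX_0,\bbQ/\bbZ)$, using that analytic Brauer classes of $\cX$ are topologically determined here. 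So I expect this step to be formal, and I will work with $\widetilde B$ on all of $\cX$.

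Second, following \cite[Section 1]{huybrechtsstellari}, the $\widetilde B$-twisted Chern character is $\ch^{\widetilde B}(\cE)=\ch(\cE_{\widetilde B})$. Here $\cE_{\widetilde B}$ is the untwisted $C^\infty$ bundle obtained by untwisting the cocycle of $\cE$ with the $C^\infty$ cochain determined by $\widetilde B$. In particular $c_1^{\widetilde B}(\cE)=c_1(\cE_{\widetilde B})$ is an honest integral class, and it restricts to $c_1^B(\cE_0)$ on $\cX_0$.

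Third, I will compute the Brauer class of the $\mathrm{PGL}_r$-bundle $\bbP(\cE)$, where $r=\rank(\cE_0)$. Its class in $\Br(\cX)$ is the image of the obstruction class in $\rH^2(\cX,\mu_r)$ to lifting the associated $PU(r)$-bundle to a $U(r)$-bundle, pushed along $\mu_r\hookrightarrow\cO^*_\cX$. This obstruction equals $c_1$ of any $U(r)$-lift modulo $r$. Taking the lift $\cE_{\widetilde B}$, the class is $\exp\!\big(2\pi i\, c_1^{\widetilde B}(\cE)/r\big)$. Comparing with the defining cocycle of $\alpha$ fixes the sign, since $\cE_{\widetilde B}$ was obtained by multiplying with the inverse twist. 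This yields $\alpha=[-c_1^{\widetilde B}(\cE)/r]$, and identifying $\rH^2(\cX,\bbQ)$ with $\rH^2(\cX_0,\bbQ)$ gives the claimed formula. As a consistency check, $-c_1^{\widetilde B}(\cE)/r$ differs from $\widetilde B$ by $-c_1(\det\text{-type integral class})/r$. This is exactly the ambiguity $\frac1r\rH^2(\cX,\bbZ)$ absorbed by the $r$-torsion of $\alpha$, matching \eqref{eqn:newBfields}.

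The main obstacle is the sign and normalization bookkeeping in the third step. It requires matching the \v{C}ech cocycle conventions for $\alpha$, for the twisted sheaf $\cE$, and for the B-field cochain $\exp(\widetilde B_{ijk})$ from \cite{huybrechtsstellari}. The first step (extending the B-field lift across the family) should be routine given the smallness of $S$. If the topological comparison there is delicate, I would instead prove the identity fiberwise on each $\cX_s$ by the same computation. I would then use that both sides are flat sections of the local system $\rH^2(\cX_s,\bbQ)/\rH^2(\cX_s,\bbZ)$, together with injectivity of $\Br(\cX)\to\prod_s\Br(\cX_s)$ on classes coming from $\rH^2(\cX,\bbQ)$.
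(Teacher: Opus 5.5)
\paragraph{Gap: Step~1 is false.}
Step~2 depends on Step~1. Untwisting the cocycle of $\cE$ on all of $\cX$ with the cochain of $\widetilde B$ requires $[\widetilde B]=\alpha$ in $\Br(\cX)$, not merely after restriction to $\cX_0$.

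The restriction $\Br(\cX)\to\Br(\cX_0)$ is not injective on torsion. The kernel of $\rH^2(\cX_0,\bbQ)\to\Br(\cX_0)$ contains every rational class of type $(1,1)$ on $\cX_0$. The kernel of $\rH^2(\cX,\bbQ)\to\Br(\cX)$ contains, modulo integral classes, only those classes that stay of type $(1,1)$ on every fiber.

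Here is a concrete counterexample. Take $\cE=\cO_\cX$ (so $r=1$ and $\alpha=0$), a family whose very general fiber has no nonzero integral $(1,1)$-classes, and $B=\lambda/n$ with $n\geq 2$ and $\lambda\in\rH^{1,1}(\cX_0,\bbZ)$ primitive. Then $B$ is a legitimate B-field lift of $\alpha_0=0$, but $[\widetilde B]\neq 0=\alpha$.

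This is exactly why the proposition has content. Two lifts of $\alpha_0$ differ by a rational $(1,1)$-class on $\cX_0$, which need not remain Hodge on nearby fibers. The statement singles out the lift $-c_1^B(\cE_0)/\rank(\cE_0)$ dictated by the sheaf that actually deforms; this is the passage from $B_\alpha$ to $B'_\alpha$ in \eqref{eqn:newBfields} and Corollary~\ref{cor:bfielddeformationcor}.

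Your consistency check fails for the same reason. A discrepancy lying in $\frac1r\rH^2(\cX,\bbZ)$ is not absorbed: for $\mu$ integral, $[\mu/r]$ is in general a nonzero $r$-torsion Brauer class.

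\paragraph{Repair.}
Drop Step~1 and only ever use a \emph{smooth} untwisting; this is what the paper does. By \cite[Theorem 4.1]{caldararu}, the Brauer class of $\bbP(\cE)$ is the image of its topological twisting class $t\in\rH^2(\cX,\bbZ/r\bbZ)\cong\rH^2(\cX_0,\bbZ/r\bbZ)$. This class depends only on the underlying smooth $\mathrm{PGL}_r$-bundle. On $\cX_0$ one has $\bbP(\cE_0)\cong\bbP(\cE_0\otimes\cL^\vee)$ smoothly, where $\cL$ is the topological twisted line bundle built from $B$. Hence $t=-c_1^B(\cE_0)\bmod r$.

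In your language, the argument runs as follows.
\begin{enumerate}
\item A smooth untwisting $\widetilde{\cL}$ of $\cE$ over all of $\cX$ exists. The obstruction lies in $\rH^3(\cX,\bbZ)\cong\rH^3(\cX_0,\bbZ)$, and it vanishes on $\cX_0$ because $\alpha_0=[B]$ comes from a rational class.
\item The class $c_1(\cE\otimes\widetilde{\cL}^{\vee})\bmod r$ does not depend on the choice of $\widetilde{\cL}$, since two choices differ by a smooth line bundle. So it restricts to $c_1^B(\cE_0)\bmod r$.
\item Your Step~3 then gives the formula.
\end{enumerate}
This independence modulo $r$ is also exactly what the unproved flatness claim in your fallback needs.

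A minor point: the class in $\rH^2(\cX,\mu_r)$ is the obstruction to lifting to $\mathrm{SL}_r$, not to $U(r)$. The obstruction to lifting to $U(r)$ lives in $\rH^3(\cX,\bbZ)$.
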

\begin{proof}
    Let $m = \rank(\cE)$. It is enough to prove a twisted variant of \cite[Proposition 4.2]{caldararu}, i.e. that the ``topological twisting class'' $t(\cY_0/\cX_0) \in \rH^2(\cX_0,\bbZ/m\bbZ)$ of the $\bbP^{m-1}$-bundle $\cY_0 \coloneq \bbP(\cE_0)$ is $-c_1^B(\cE_0) \mod m$. \par 
    Observe first that the diagrams \cite[Equations (4.10) and (4.11)]{caldararu} still commute for sheaves of smooth functions (i.e., not necessarily holomorphic). Moreover, the topological twisting class map factors through the smooth variant:
    \[t \colon \rH^1(\cX_0,\mathrm{PGL}(m)) \to \rH^1(\cX_0,\mathrm{PGL}_{\mathrm{smooth}}(m)) \to \rH^2(\cX_0,\bbZ/n\bbZ).\] Recall that $c_1^B(\cE_0) \coloneq c_1(\cE_0 \otimes \cL^\vee)$ where $\cL$ is a topological $\alpha$-twisted line bundle constructed from the B-field $B$. It is enough then to observe that 
    \[[\bbP(\cE_0)] = [\bbP(\cE_0 \otimes \cL^\vee)] \in \rH^1(\cX_0,\mathrm{PGL}_{\mathrm{smooth}}(m)),\]
    which is immediate as the transition functions defining $\cE_0$ and $\cE_0 \otimes \cL^\vee$ differ by multiplication by a nonvanishing smooth function.
\end{proof}
\begin{corollary} \label{cor:bfielddeformationcor}
    The Brauer class $\alpha'$ (resp. $\beta'$) of Theorem \ref{thm:hyperholequivalence} is the image of the parallel transport of $B'_\alpha \in \rH^2(X,\bbQ)$ (resp.\ $B'_\beta \in \rH^2(Y,\bbQ)$) in $\Br(X')$ (resp.\ $\Br(Y')$) for $B_\alpha'$ and $B_\beta'$ as defined in \eqref{eqn:newBfields}.
\end{corollary}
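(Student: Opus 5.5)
The plan is to apply Proposition~\ref{prop:bfielddeformationprop} to the family of twisted vector bundles produced by Theorem~\ref{thm:hyperholdef}. By Theorem~\ref{thm:hyperholdef}, the bundle $\cE'$ on $X'\times Y'$ is obtained from $\cE$ by deforming along a generic twistor path
\[p=(X,\marking_X,Y,\marking_Y) \in L_1, L_2,\dots, L_m \ni q=(X',\marking_{X'},Y',\marking_{Y'}).\]
Over each diagonal twistor line $L_i$ there is a smooth proper family $\cX_i\times_{L_i}\cY_i \to L_i$ carrying a locally free twisted sheaf $\cE_i$ that restricts to the previous bundle at the junction point. I will therefore argue by induction along the path. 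The inductive statement is this: at each junction point $(X_i,Y_i)$, the Brauer class of the bundle is the image in $\Br(X_i\times Y_i)$ of the parallel transport of $-\mathrm{pr}_1^*B_\alpha' + \mathrm{pr}_2^*B_\beta'$.

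The local step works as follows. Near a point $s$ of $L_i$, Proposition~\ref{prop:bfielddeformationprop} says that the Brauer class of the family is $[-c_1^{B}(\cE_s)/\rank]$, transported to nearby fibres. At the starting point this class is exactly $-\mathrm{pr}_1^*B_\alpha'+\mathrm{pr}_2^*B_\beta'$ by \eqref{eqn:newBfields}. I need the class $c_1^B/\rank$ to be locally constant in the family, modulo integral classes and modulo the change of $B$-field. This holds because $c_1^B(\cE_t)$ is the flat transport of $c_1^B(\cE_s)$ when the topological twisted line bundle $\cL$ is chosen on the total space. Covering each compact $L_i\simeq\bbP^1$ by finitely many such small discs and chaining them then gives the result over all of $L_i$, since parallel transport is compatible with restriction.

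It remains to separate the two factors. Over the diagonal twistor line the family is a fibre product, so parallel transport respects the K\"unneth decomposition $\rH^2(X\times Y,\bbQ)\simeq \rH^2(X,\bbQ)\oplus\rH^2(Y,\bbQ)$. The transported class is therefore $-\mathrm{pr}_1^*(\text{transport of }B_\alpha')+\mathrm{pr}_2^*(\text{transport of }B_\beta')$.

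Next, Theorem~\ref{thm:hyperholequivalence} produces the Brauer classes $\alpha'$ and $\beta'$ by writing the twist of $\cE'$ in the form $-\mathrm{pr}_1^*\alpha'+\mathrm{pr}_2^*\beta'$. Restricting to the slices $X'\times\{y\}$ and $\{x\}\times Y'$ identifies $\alpha'$ and $\beta'$ with the images of the respective transports. The ambiguity from a class of the form $\mathrm{pr}_1^*\gamma$ with $\gamma$ pulled back from a point is trivial, because $\Br$ of a point vanishes.

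The main obstacle is the well-definedness of this identification. First, the B-field $B$ used in Proposition~\ref{prop:bfielddeformationprop} must vary compatibly along the path, which is why the statement is phrased with the normalized fields $B'_\alpha$ and $B'_\beta$, whose sum is independent of $B$. Second, the splitting of a twist on a product $X'\times Y'$ into factors must be unique. I expect to handle this with $\rH^1(X',\cO)=0$ and the fact that both factors are simply connected, so that $\Br(X'\times Y')$ contains $\Br(X')\oplus\Br(Y')$ injectively via pullbacks.
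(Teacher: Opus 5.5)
Your proposal is correct and follows the paper's route. The paper states this corollary without proof, as an immediate consequence of Proposition~\ref{prop:bfielddeformationprop} applied to the deformation of $\cE$ to $\cE'$ from Theorem~\ref{thm:hyperholdef}, with the starting class read off from \eqref{eqn:newBfields}; you spell out the same steps (chaining along the twistor path, the K\"unneth splitting, and identifying the factors on slices), and your local step correctly carries the class $-c_1^B/\rank$ modulo $\rH^2(-,\bbZ)$ (equivalently the topological twisting class of $\bbP(\cE)$) rather than just the Brauer class.
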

\subsection{MBM classes on \texorpdfstring{hyperk\"ahler}{hyperkähler} manifolds}

By \cite{amerikverbitsky}, the birational geometry of hyperk\"ahler manifolds is controlled by certain special integral classes of negative square called \textit{MBM classes}. \par 
To phrase this more precisely, we recall that for $X$ a hyperk\"ahler manifold, the positive cone $\mathrm{Pos}(X) \subset \rH^{1, 1}(\bbR)$ is the connected component of $\{v \in \rH^{1,1}(X,\bbR) : (v,v) > 0\}$ containing Kähler classes. If we write $\mathrm{Kah}(X)$ for the cone generated by all K\"ahler classes on $X$, then there is an obvious inclusion $\mathrm{Kah}(X) \subset \mathrm{Pos}(X)$. In fact, the Kähler cone arises as a chamber of a natural wall-and-chamber decomposition induced by the MBM classes. 
\begin{theorem}[{\cite[Theorem 6.2]{amerikverbitsky}}] \label{thm:mbmcone}
    The K\"ahler cone $\mathrm{Kah}(X)$ is a connected component of 
    \[\mathrm{Pos}(X) \setminus \bigcup_{z \text{ MBM class}} z^\perp,\]
    and all other components arise as the K\"ahler cones of other birational models under parallel transport Hodge isometries. 
\end{theorem}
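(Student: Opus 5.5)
Since this theorem is quoted from \cite{amerikverbitsky}, I only outline how I would prove it. I would work with the definition of an MBM class $z \in \rH^{1,1}(X,\bbZ)$, $(z,z)<0$, as a class such that, for some deformation $(X',z')$ of $(X,z)$ along the Hodge locus of $z$ with $\Pic(X') = \bbZ z'$, a multiple of $\pm z'$ is represented by a minimal rational curve. The proof then splits into three steps, taken in this order.
\begin{enumerate}
\item Show that the walls are locally finite inside $\mathrm{Pos}(X)$.
\item Show that $\mathrm{Kah}(X)$ meets no wall and that every face of $\mathrm{Kah}(X)$ lying in the interior of $\mathrm{Pos}(X)$ is supported on some $z^\perp$.
\item Identify the remaining chambers with Kähler cones of birational models.
\end{enumerate}

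\textbf{Local finiteness.} I would use the fact that the squares of primitive MBM classes are bounded below. By Amerik--Verbitsky's ergodicity argument on the period domain, the MBM property is invariant under monodromy and under deformation along the Hodge locus. Hence the primitive MBM classes lie in finitely many orbits of an arithmetic group, and so finitely many bounded values of $(z,z)$ occur. A bounded-below set of integral classes of negative square has walls $z^\perp$ that are locally finite in the positive cone. So the complement of the walls is a union of open polyhedral chambers.

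\textbf{The Kähler cone is a chamber.} First, no Kähler class $\omega$ lies on $z^\perp$ for an MBM class $z$. The key input is deformation invariance of the curve: by Ran's theorem, a rational curve on a hyperk\"ahler $2n$-fold whose deformations sweep out a subvariety of dimension $2n-1$ (codimension one, as for an extremal ray) deforms along the whole Hodge locus of its class. One then argues that a multiple of $\pm z$ is effective on $X$ itself, realized by a curve, and not merely on some deformation; this forces $(\omega,z)\neq 0$. Conversely, by Huybrechts--Boucksom, $\mathrm{Kah}(X)$ is the set of classes in $\mathrm{Pos}(X)$ that are positive on all rational curves. So every codimension-one face of $\mathrm{Kah}(X)$ inside $\mathrm{Pos}(X)$ is cut out by the class of an extremal rational curve. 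I would then show that such an extremal class is MBM: its rational curves fill a codimension-one locus (a standard dimension count for extremal contractions on symplectic varieties), hence deform along the Hodge locus, so the definition applies. Together these show that $\mathrm{Kah}(X)$ is exactly one connected component of the complement.

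\textbf{The other chambers.} Next I handle chambers other than $\mathrm{Kah}(X)$. Markman's results express the fundamental exceptional chamber as the union of the Kähler cones of birational models, transported by parallel transport Hodge isometries coming from Huybrechts' inseparable-points theorem. The remaining chambers of $\mathrm{Pos}(X)$ are obtained from these by the Weyl group generated by reflections in prime exceptional divisors; these reflections are monodromy operators and Hodge isometries. Every such wall comes from an MBM class (prime exceptional divisors are ruled by rational curves). So I would first show that the chamber structure refines Markman's: each exceptional chamber is a union of MBM chambers. Then each MBM chamber is a Weyl translate of a chamber inside the movable cone, which is the Kähler cone of some birational model.

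\textbf{Main obstacle.} I expect the hardest step to be the deformation invariance of the MBM property, and in particular showing that a multiple of $\pm z$ is represented by a curve on $X$ itself. This needs Ran's deformation theory of rational curves combined with the Ratner/ergodicity argument to pass between arbitrary points of the Hodge locus and points with Picard rank one. I would try to control the degenerations of the rational curves through limits of cycles in the Barlet space.
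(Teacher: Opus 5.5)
The paper gives no proof of this statement; it quotes it from Amerik--Verbitsky, so I compare your outline with their argument. Your architecture is reasonable: MBM walls miss $\mathrm{Kah}(X)$, every point of $\partial\mathrm{Kah}(X)\cap\mathrm{Pos}(X)$ lies on an MBM wall, and the other chambers come from birational models via Markman's $W_{\mathrm{Exc}}$. However, the central step rests on a false lemma.

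\textbf{Extremal curves need not sweep a divisor.} Symplectic contractions can be small. Take a Lagrangian $\bbP^n\subset X$ with $n\geq 2$ that is contracted by the small contraction underlying a Mukai flop. Its lines span an extremal ray but sweep out only an $n$-dimensional locus. The corresponding wall is a flopping wall, which is exactly the kind of wall between $\mathrm{Kah}(X)$ and $\mathrm{Kah}(X')$ used in Theorem~\ref{thm:dequivalence}. So your proof that faces of $\mathrm{Kah}(X)$ are MBM walls only covers divisorial walls. The same is true of the version of Ran's theorem you invoke for deformation invariance.

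In Ran's argument the relevant quantity is the dimension of the family of deformations of $C$ inside $X$, not the dimension of the locus it sweeps out. That family always has dimension $\geq 2n-2$, and lines in $\bbP^n$ attain exactly $2n-2$. What you actually need is to choose $C$ minimal, e.g.\ of least degree against a fixed K\"ahler class among rational curves with $(\omega,C)=0$. You must then show, via breaking of curves and compactness of the Barlet space, that $C$ deforms over a neighbourhood of $X$ in the Hodge locus of $[C]$, hence to a point of Picard rank one.

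\textbf{Your stated main obstacle can be bypassed.} You do not need a multiple of $\pm z$ to be effective on $X$ itself. Suppose $z^\perp$ met $\mathrm{Kah}(X)$, and take $\omega$ generic in $z^\perp\cap\mathrm{Kah}(X)$. Then $z$ is orthogonal to the twistor $3$-plane $\langle\omega,\operatorname{Re}\sigma,\operatorname{Im}\sigma\rangle$, so it stays of type $(1,1)$ along the twistor line. A very general member $X_L$ has $\Pic(X_L)=\bbQ z\cap\rH^2(X,\bbZ)$, and its K\"ahler class $\omega_L$ lies in that $3$-plane, hence is orthogonal to $z$. Deformation invariance of the MBM property gives a rational curve on $X_L$ whose class is a nonzero multiple of $z$, on which $\omega_L$ has degree $0$. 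This is a contradiction.

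\textbf{Your first step is unjustified and unnecessary.} Monodromy and deformation invariance only show that MBM classes form a union of monodromy orbits. Each value of $z^2$ accounts for finitely many orbits, but nothing bounds $z^2$, so ``finitely many orbits'' does not follow. The bound of Theorem~\ref{thm:mbmbound} is a later, separate theorem that needs $b_2\geq 5$. It is proved via Ratner theory: unbounded squares would make MBM walls dense in the positive cone of a suitable deformation, contradicting that they avoid its nonempty open K\"ahler cone. So it uses the first half of the statement you are proving, and invoking it here is circular. You do not need it. $\mathrm{Kah}(X)$ is a connected component of the complement as soon as it is connected, meets no MBM wall, and every point of $\partial\mathrm{Kah}(X)\cap\mathrm{Pos}(X)$ lies on one. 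These are pointwise statements; note also that chambers need not be polyhedral in any case.

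\textbf{Your third step is fine in outline,} with two corrections. Replace ``the fundamental exceptional chamber is the union of K\"ahler cones of birational models'' by ``this union is dense in it'', since the walls between these cones are missing. For prime exceptional divisors the divisorial form of Ran's argument is legitimate, so their walls are indeed MBM.
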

The union of the K\"ahler cones of all birational models of $X$ forms a convex cone $\mathrm{BirKah}(X)$ with wall-and-chamber structure coming from the MBM classes, whose chambers correspond precisely to the birational models of $X$.\footnote{Strictly speaking, one should take the closure of $\mathrm{BirKah}(X) \subset \mathrm{Pos}(X)$, as there are missing walls separating the different K\"ahler cones of the birational models.} We hence have inclusions
\[\mathrm{Kah}(X) \subset \mathrm{BirKah}(X) \subset \mathrm{Pos}(X).\]
MBM classes satisfy a very convenient finiteness condition.
\begin{theorem}[{\cite[Theorem 3.17]{mbmsquare}}] \label{thm:mbmbound}
    Suppose $X$ is a hyperk\"ahler manifold with second Betti number $b_2(X) \geq 5$. Then there exists a constant $\mbmconst > 0$ depending only the deformation type of $X$ such that 
    \[-\mbmconst < z^2 < 0\]
    for any primitive MBM class $z \in H^2(X, \mathbb Z)$ on $X$. 
\end{theorem}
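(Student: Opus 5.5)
The plan is to argue by contradiction, following Amerik--Verbitsky: I would show that the primitive MBM classes form only finitely many orbits under the monodromy group $\Gamma \coloneq \mathrm{Mon}^2(X) \subset O(\rH^2(X,\bbZ))$. Since $\Gamma$ acts by isometries, $z^2$ is constant on each orbit, so finiteness of orbits gives the bound $\mbmconst$ at once. Because MBM classes are defined in a deformation-invariant way and monodromy transports MBM classes to MBM classes, the resulting constant depends only on the deformation type.

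First I record two standard facts.
\begin{enumerate}
\item By Verbitsky's global Torelli theorem, $\Gamma$ is an arithmetic subgroup of finite index in $O(\rH^2(X,\bbZ))$.
\item Being MBM is a deformation-invariant property of a class $z$ along deformations that keep $z$ of type $(1,1)$, and it is $\Gamma$-invariant.
\end{enumerate}
Now suppose there are infinitely many $\Gamma$-orbits of primitive MBM classes. Then there is a sequence $z_i$ of MBM classes whose squares $z_i^2 \to -\infty$ (along a subsequence), since for fixed square there are only finitely many $\Gamma$-orbits of primitive vectors. I then pick a deformation $X'$ with $\rH^{1,1}(X',\bbZ)$ of signature $(1,k)$ with $k \ge 2$, chosen so that infinitely many MBM classes are of type $(1,1)$ on $X'$. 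This is where $b_2 \ge 5$ enters: it lets us realize such a rank $\ge 3$ hyperbolic Picard lattice while keeping room for the period.

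The key step, and the main obstacle, is an equidistribution argument. I would apply Ratner's orbit closure theorem, in the form for lattices in $SO(1,k)$ (or a Mahler compactness argument for hyperbolic lattices), to the hyperplanes $z_i^\perp$ inside the hyperbolic space $\mathbb{P}(\mathrm{Pos}(X'))$. The aim is to show that the $\Gamma'$-translates of the walls $z_i^\perp$, where $\Gamma'$ is the stabilizer of the Picard lattice in the monodromy group, accumulate densely in $\mathrm{Pos}(X')$. Concretely, the plan is as follows.
\begin{enumerate}
\item Normalize $z_i/\sqrt{-z_i^2}$ to unit spacelike vectors.
\item Use the fact that the orbit of the corresponding totally geodesic hyperplane under the arithmetic group $\Gamma'$ is either closed or dense. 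Here rank $\ge 3$ is needed so that $SO(1,k-1)$ is generated by unipotents and Ratner applies.
\item Show that infinitely many distinct closed orbits of such hyperplanes must equidistribute, hence become dense.
\end{enumerate}
If this is hard to carry out directly, I would first try the Mozes--Shah limit theorem for sequences of closed $H$-orbits.

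The contradiction then comes from the structure of the Kähler cone. Each wall $z^\perp$ for an MBM class $z$ cannot meet $\mathrm{BirKah}(X')$ in its interior chamber, by Theorem~\ref{thm:mbmcone}. If these walls are dense in $\mathrm{Pos}(X')$, then the Kähler cone, being an open connected component of the complement, would be empty. This is absurd because $X'$ is Kähler. Therefore only finitely many orbits occur, and we may take $\mbmconst$ larger than $-z^2$ over a set of orbit representatives.
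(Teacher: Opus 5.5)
The paper does not prove this statement; it quotes it from Amerik--Verbitsky. Your overall strategy matches theirs: show that monodromy has finitely many orbits on MBM classes, via a closed-or-dense dichotomy for orbits of hyperplanes, contradicted by the existence of a K\"ahler chamber. However, you run the dynamics in the wrong space, and this is a genuine gap.

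\textbf{The gap.} After restricting to $\mathrm{NS}(X')$ of a single projective deformation, the symmetric space is $\mathbb{H}^k$ with $k=\rho(X')-1\le b_2-3$. The orbit-closure dichotomy for totally geodesic hyperplanes needs the stabilizer $\SO(1,k-1)$ to be generated by unipotents, which requires $k\ge 3$.
\begin{enumerate}
\item Your claim that Picard rank $\ge 3$ suffices is wrong. For $k=2$ the stabilizer is the split torus $\SO(1,1)$, and the unipotent-flow results you invoke do not apply.
\item For $k=2$ the dichotomy is actually false. The walls are geodesics on a hyperbolic surface, and infinitely many distinct closed geodesics can stay inside a compact subsurface.
\item Since $b_2=5$ forces $\rho(X')\le 3$, your argument cannot start at the lower end of the stated range. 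So your account of where $b_2\ge 5$ enters is not correct.
\item Even for larger $b_2$, you only assert the existence of a single $X'$ whose N\'eron--Severi lattice contains representatives of infinitely many distinct $\Gamma$-orbits of MBM classes. This needs a real argument, e.g.\ Eichler-type transitivity plus bookkeeping for the finite index of $\Gamma$.
\end{enumerate}

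\textbf{The fix.} The cited source avoids passing to one deformation. Let $R\subset\rH^2(X,\bbZ)$ be the monodromy-invariant set of all MBM classes. Work on the period space of hyperk\"ahler structures, $\mathrm{Gr}_{+++}(\rH^2(X,\bbR))=O(3,b_2-3)/(O(3)\times O(b_2-3))$, with the full arithmetic group $\Gamma$.
\begin{enumerate}
\item The dichotomy there says: either $\Gamma$ has finitely many orbits on $R$, or $\bigcup_{z\in R} z^\perp$ is dense. It holds once $b_2-3\ge 2$, because the stabilizer of a hyperplane section is $\SO(3,b_2-4)$, which is then generated by unipotents. This is exactly where $b_2\ge 5$ is used.
\item Density is excluded as follows. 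The hyperk\"ahler period map $(\sigma,\omega)\mapsto\langle\operatorname{Re}\sigma,\operatorname{Im}\sigma,\omega\rangle$ has nonempty open image.
\item This image misses every $z^\perp$ with $z$ MBM. A positive $3$-plane orthogonal to $z$ makes $z$ of type $(1,1)$ and puts the K\"ahler class $\omega$ on the wall $z^\perp$, contradicting Theorem~\ref{thm:mbmcone}.
\end{enumerate}
This treats all MBM classes at once, so you never need a single deformation carrying infinitely many of them.
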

Moreover, MBM classes are naturally deformation-invariant, in the following sense. 
\begin{proposition}[{\cite[Corollary 5.13]{amerikverbitsky}}]
    If $X$ is a hyperk\"ahler manifold and $z \in \rH^{1,1}(X,\bbZ)$ is an MBM class, then the parallel transport of $z$ to any deformation $X'$ of $X$ is an MBM class on $X'$ if and only if it remains of Hodge type on $X'$.
\end{proposition}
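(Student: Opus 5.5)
The plan is to argue directly from the definition of MBM classes in \cite{amerikverbitsky}. A negative class $z \in \rH^{1,1}(X,\bbZ)$ is MBM if, on some deformation $X''$ of $X$ on which $z$ stays of type $(1,1)$ and $\Pic(X'') = \bbZ z$, a multiple of $\pm z^\vee$ (the dual curve class) is represented by a rational curve. The statement then reduces to showing that this property depends only on the connected component of the Hodge locus of $z$.

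The argument has three steps.
\begin{enumerate}
\item \emph{Reduction.} The forward implication is trivial, since MBM classes are of Hodge type by definition. For the converse, consider the Hodge locus
\[\fM_z = \{(X',\marking_{X'}) : \marking_{X'}^{-1}\marking_X(z) \in \rH^{1,1}(X')\} \subset \fM_\Lambda^0 .\]
By the global Torelli theorem of Verbitsky \cite{verbitsky}, $\fM_z$ maps onto the period subdomain $\Omega_{z^\perp} = \Omega_\Lambda \cap \bbP(z^\perp_\bbC)$, which is connected because $z^\perp$ has signature $(3,b_2-4)$. So $\fM_z$ is connected up to inseparable points, and those are birational models. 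Its very general points have $\Pic = \bbZ z$.
\item \emph{Openness.} Suppose $X''$ has $\Pic = \bbZ z$ and contains a rational curve $R$ of class proportional to $z^\vee$. By Ran's deformation theory of curves on holomorphic symplectic manifolds, the irreducible component of the space of rational curves through $[R]$ has dimension at least $2n-2$. Moreover, $R$ deforms to first order along every direction in which $z$ stays $(1,1)$. The curve therefore survives on an open neighbourhood of $X''$ in $\fM_z$.
\item \emph{Closedness.} Let $X''_k \to X''_\infty$ be a sequence of very general points of $\fM_z$ converging in $\fM_z$, with rational curves $R_k$ of class $m z^\vee$ on $X''_k$. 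Fix a Kähler class on the limit and extend it to nearby fibres. Its degree on $R_k$ is then bounded, so Bishop compactness gives a limit cycle that is a connected union of rational curves. If $X''_\infty$ is also very general, with Picard group $\bbZ z$, every component has class proportional to $z^\vee$. Hence $\pm z^\vee$ is again represented by a rational curve.
\end{enumerate}
Connectedness of $\fM_z$ together with openness and closedness shows that rational curves in a class proportional to $z^\vee$ exist at every very general point of $\fM_z$. This includes a very general point near the given $X'$, so $z$ is MBM on $X'$.

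The main obstacle is the closedness step. The limit cycle may a priori split into components that are not individually of class proportional to $z^\vee$, or the Kähler volume may degenerate near non-separated points of $\fM_z$. I would first restrict the whole connectivity argument to the dense subset of Picard-rank-one points, joined by generic twistor-type paths inside $\fM_z$. Along such paths the only integral $(1,1)$ classes are multiples of $z$, which forces every limit component to be proportional to $z^\vee$. The non-separated points would be handled by noting that inseparable points are birational, and MBM classes are preserved under the identification by parallel transport in Theorem~\ref{thm:mbmcone}.
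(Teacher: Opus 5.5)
The paper does not prove this statement; it quotes it from \cite[Corollary 5.13]{amerikverbitsky}. Your open--closed--connected scheme on the Hodge locus has the right overall shape. However, the step that carries the real content is openness, not closedness, and as written openness does not go through. Closedness is routine once the limit point has Picard group $\bbZ z$ and the degree is kept bounded, for instance by working with the sets of points where some $\pm m'z^\vee$ with $m'\le m$ is represented.

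Ran's theorem gives only a \emph{lower} bound. Every component $\mathcal R$ of the space of deformations of $R$ together with $X''$ over the Hodge locus $B_z$ has dimension at least $(2n-2)+\dim B_z$. This forces $\mathcal R\to B_z$ to cover a neighbourhood of $X''$ only if the deformations of $R$ inside the fixed fibre $X''$ form a family of dimension exactly $2n-2$. Otherwise $\mathcal R$ could lie over a proper analytic subset of $B_z$ with larger fibres. Such larger families do occur: conics in a Lagrangian plane of a hyperk\"ahler fourfold move in a $5$-dimensional family.

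``First-order deformations in every Hodge direction'' does not replace this bound. It is not automatic, since $h^1(N_R)$ can exceed $1$ (it is $4$ for those conics). Even when it holds, an obstructed deformation space can surject on Zariski tangent spaces while mapping to a point.

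The missing idea is minimality. At a Picard-rank-one point, take $R$ of minimal degree among rational curves of class proportional to $z^\vee$. Then $R$ cannot break, because every component of a degeneration is again proportional to $z^\vee$ and of smaller degree. By bend-and-break, the curves through a general point $p$ of the swept locus $Z$ form a family of dimension one less than the subvariety they cover. The symplectic form forces that subvariety into the leaf through $p$ of the characteristic foliation of the coisotropic $Z$, which has dimension $\operatorname{codim} Z$. This gives the upper bound $2n-2$, and combined with Ran's lower bound it makes $\mathcal R\to B_z$ open at $[R]$.

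Your proposed repair of the connectivity step also fails. If some multiple of $\pm z^\vee$ is represented by a curve $R$ on $X''$, then $\int_R\omega$ is a nonzero multiple of $(\omega,z)$ for every K\"ahler class $\omega$. So $z$ is orthogonal to no K\"ahler class, and every twistor line through $X''$ meets the Hodge locus of $z$ only at $X''$ and its conjugate. There are therefore no twistor paths inside $\fM_z$ through the points you need. Use instead that the Picard-rank-one locus of $\Omega_{z^\perp}$ is the complement of countably many hypersurfaces in a connected complex manifold, hence path-connected.

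Moreover, $\fM_z$ is genuinely disconnected. For a K3 surface $S$ with a smooth rational curve of class $z$, the points $(S,\marking)$ and $(S,s_z\circ\marking)$ have the same period but are separated in $\fM_z$, because the locus where $z$ is effective is open and closed there. So you must run the argument on each sheet and transfer the curve across inseparable pairs.

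Citing Theorem~\ref{thm:mbmcone} for that transfer is essentially circular, since it is a special case of what you are proving. At Picard-rank-one points the transfer can be done directly. Every rational curve there has class proportional to $z^\vee$. Suppose a distinct inseparable partner of $X''$ carried no rational curve. Then its K\"ahler cone would be the whole positive cone, since K\"ahler classes are exactly the positive classes that are positive on all rational curves. The two marked manifolds would then share a K\"ahler class and hence coincide in $\fM_\Lambda$, a contradiction.
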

\section{Tate--Shafarevich twists of Laza--Sacc\texorpdfstring{\`a}{à}--Voisin systems} \label{sec:lsvtwists}
Let $Y$ be a hyperk\"ahler manifold. By \cite{abashevarogov,soldatenkovverbitsky,abasheva}, there is a good theory of Tate--Shafarevich twists of the Lagrangian fibration $\pi \colon Y \to \bbP^n$. As a consequence of their work, there is a natural family of hyperk\"ahler manifolds of the same deformation type \[\cY \to \bbC\]
called the \textit{Tate--Shafarevich family}, the fiber over $t \in \bbC$ of which is a Lagrangian-fibered hyperk\"ahler \[Y_t \to \bbP^n\] which is produced by regluing $Y \to \bbP^n$ along local vertical automorphisms over an open cover of the base \cite[Theorem A]{abasheva}. In the particular case where $Y = \overline\IJ(C)$ is a compactified intermediate Jacobian fibration of \cite{lsv}, these twists have been studied at length in \cite{ijtwists}.
\par 
Now let $X$ be any hyperk\"ahler manifold of $\OG$ type. In \cite{rapagnetta}, it is shown that the lattice $\rH^2(X,\bbZ)$ with the BBF form is always isometric to the lattice 
\[\Lambda_{\OG} \coloneq U^{\oplus 3} \oplus E_8(-1)^{\oplus 2} \oplus A_2(-1).\]

\begin{lemma}[{\cite[Lemma 2.1]{mongardionorati}}] \label{lem:isotropic}
    All primitive isotropic vector $v \in \Lambda_{\OG}$ are of divisibility $1$ and contained in a single orbit for the action of $O^+(\Lambda_{\OG})$.
\end{lemma}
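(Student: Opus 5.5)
The plan is to reduce both claims to the structure of the discriminant group and then apply Eichler's criterion. Write $\Lambda = \Lambda_{\OG} = U^{\oplus 3}\oplus E_8(-1)^{\oplus 2}\oplus A_2(-1)$. Its discriminant group is $A_\Lambda \cong A_{A_2(-1)} \cong \bbZ/3\bbZ$, generated by $\lambda/3$ for a suitable $\lambda\in A_2(-1)$ with discriminant quadratic form value $q(\lambda/3) = -2/3 \bmod 2\bbZ$.

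\textbf{Step 1: divisibility.} Let $v\in\Lambda$ be primitive and isotropic, and let $d=\div(v)$. Then $v/d \in \Lambda^\vee$, and its class in $A_\Lambda$ has order exactly $d$, because $v$ is primitive. Since $A_\Lambda\cong \bbZ/3$, we get $d\in\{1,3\}$. If $d=3$, then $v/3$ represents a nonzero element of $A_\Lambda$. Its discriminant quadratic form value is $q(v/3) = v^2/9 = 0 \bmod 2\bbZ$. However, the nonzero elements of $A_\Lambda$ have $q = -2/3 \bmod 2\bbZ$ (both $\pm\lambda/3$ give the same value), which is a contradiction. Hence $d=1$.

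\textbf{Step 2: single orbit.} Since $\Lambda$ contains $U^{\oplus 2}$ as an orthogonal summand, Eichler's criterion applies. It says that the $\widetilde O(\Lambda)$-orbit of a primitive vector $v$ is determined by the pair $(v^2,\ v/\div(v) \in A_\Lambda)$. For primitive isotropic $v$, Step 1 shows this invariant is $(0,0)$ for every such $v$. So all of them lie in one $\widetilde O(\Lambda)$-orbit.

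\textbf{Step 3: passing to $O^+(\Lambda)$.} The group $\widetilde O(\Lambda)$ contains elements of spinor norm $-1$, for instance the reflection in a $(-2)$-vector of $E_8(-1)$, which acts trivially on $A_\Lambda$. It also contains elements of spinor norm $+1$. Suppose $g\in\widetilde O(\Lambda)$ with $gv=w$ is not orientation preserving. Compose $g$ with a reflection $s_\delta$ in a $(-2)$-class $\delta$ orthogonal to $w$; such a $\delta$ exists, for example inside the $E_8(-1)$ part after first moving $w$ into $U$. Then $s_\delta g \in O^+(\Lambda)$ and still sends $v$ to $w$.

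Alternatively, and more simply, one can first use Eichler's criterion in its $O^+$-refined form: the group generated by Eichler transvections lies in $\widetilde{SO}^+(\Lambda)$. The orbit statement then holds directly for this subgroup, and hence for $O^+(\Lambda)$.

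\textbf{Main obstacle.} The only subtle point is making sure the version of Eichler's criterion used yields transitivity under $O^+$ rather than just $O$. I would cite the form in which Eichler transvections generate the relevant action, since these transvections automatically preserve the orientation of positive $3$-planes. The discriminant computation itself is routine.
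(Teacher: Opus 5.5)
Your Steps 1 and 2 are correct, and they are the standard argument. The paper does not prove this lemma itself; it cites \cite[Lemma 2.1]{mongardionorati}. The discriminant-form computation is exactly right: $q(v/3)=v^2/9=0$, whereas both nonzero elements of $A_{\Lambda_{\OG}}\cong\bbZ/3\bbZ$ have $q=-2/3 \bmod 2\bbZ$. Following it with Eichler's criterion is how the result is proved.

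The first argument in your Step 3, however, is wrong and should be deleted. A reflection $s_\delta$ in a class with $\delta^2=-2$ does \emph{not} reverse orientation. Since $\delta^\perp\otimes\bbR$ has signature $(3,20)$, $s_\delta$ is the identity on some positive definite $3$-plane, so $s_\delta\in O^+(\Lambda_{\OG})$. Whatever spinor-norm convention you use, it is the orientation character on positive $3$-planes that defines $O^+$ here. Composing a non-orientation-preserving $g$ with $s_\delta$ therefore leaves it non-orientation-preserving.

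To repair that route you would need a reflection in a class of \emph{positive} square orthogonal to $w$. For example, $s_u$ with $u^2=2$ and $u\perp w$ is integral, acts trivially on the discriminant group, and reverses the orientation of positive $3$-planes.

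Your \emph{alternative} finish is the right one and makes the issue moot. Eichler's criterion in the form of \cite[Proposition 3.3(i)]{ghs}, which is how the paper records it in Lemma~\ref{lem:eichlercriterion}, already gives transitivity of $\widetilde{\SO}^+(\Lambda_{\OG})\subset O^+(\Lambda_{\OG})$ on primitive vectors with prescribed square and prescribed class of $v/\div(v)$ in the discriminant group. So Steps 2 and 3 collapse into a single citation.
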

As a consequence of Lemma \ref{lem:isotropic} and \cite[Theorem 4.6]{ijtwists}, given any primitive isotropic class $v \in \rH^2(X,\bbZ)$ it follows that the lattice $v^\perp/v$ is isometric to a Tate twist of the primitive cohomology of a cubic fourfold. Taking $v^\perp/v$ with the induced Hodge structure therefore naturally provides a point in the period domain of cubic fourfolds; since the image of the period map is known explicitly, as long as $v^\perp/v$ avoids the divisors $\cC_2,\cC_6$ we may associate to the pair $(X,v)$ a unique cubic fourfold. When the cubic fourfold is very good in the sense of Dutta--Marquand \cite[Definition 2]{verygoodcubics}, the manifold $X$ is itself related to the compactified intermediate Jacobian fibration of that cubic. The following characterization of Lagrangian fibrations on hyperkähler manifolds of $\OG$-type is analogous to the one for $\Kthreen$-type proved in \cite{markmanlagrangian}. Although it is already implicit from the description of Tate--Shafarevich twists of LSV fibrations as a divisor in the moduli space of $\OG$-type hyperk\"ahler manifolds in \cite[Section 1.3]{ijtwists}, we provide a proof for the benefit of the reader.

\begin{theorem} \label{thm:lagfibthm}
    Suppose that $X$ is a hyperk\"ahler manifold of $\OG$-type admitting a primitive isotropic class $v \in \rH^{1,1}(X,\bbZ)\subset\rH^2(X,\bbZ)$ such that there is a Hodge isometry $$v^\perp/v \simeq \Hprim^4(C,\bbZ)(1)$$ for $C$ a very good cubic fourfold. Then $X$ is birational to a Tate--Shafarevich twist $Y_t$ of the LSV fibration $Y = \overline\IJ(C).$ \par 
    If we assume in addition that $v$ is nef, then we may take the birational equivalence $\phi \colon X \dashrightarrow Y_t$ to commute with the induced Lagrangian fibrations on both $X$ and $Y_t$. 
\end{theorem}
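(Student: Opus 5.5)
We argue via periods and the global Torelli theorem. Let $Y = \overline\IJ(C) \to \bbP^5$ be the LSV fibration of the very good cubic $C$, and $v_Y = \pi^*\cO_{\bbP^5}(1) \in \rH^{1,1}(Y,\bbZ)$ the primitive isotropic nef class. By Lemma~\ref{lem:isotropic} we choose markings $\marking_X$, $\marking_Y$ to $\Lambda_{\OG}$ with $\marking_X(v) = \marking_Y(v_Y) =: v_0$, in the same connected component of $\fM_{\Lambda_{\OG}}$. The orbit statement lets us send $v$ to $v_Y$; orientation and monodromy subtleties can be absorbed by precomposing with $-1$ or a monodromy operator fixing $v_0$, using Markman's description of the monodromy group of $\OG$ type as $O^+(\Lambda_{\OG})$ up to sign.

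\textbf{Step 1: periods.} \cite[Section 1.3 and Theorem 4.6]{ijtwists} identify $v_Y^\perp/v_Y \simeq \Hprim^4(C,\bbZ)(1)$ as Hodge structures. The twists $Y_t$ have periods $[\sigma_Y + t v_0]$, and these sweep out every period $[\sigma]$ with $\sigma \in v_0^\perp$ whose image in $(v_0^\perp/v_0)_\bbC$ equals the image of $\sigma_Y$ up to scaling. Since $v \in \rH^{1,1}(X)$, we have $\sigma_X \in v^\perp$. The hypothesis, composed with the identification for $Y$, gives a Hodge isometry $v^\perp/v \simeq v_Y^\perp/v_Y$. We would like this isometry to be the one induced by our markings; after modifying $\marking_X$ by an isometry of $\Lambda_{\OG}$ fixing $v_0$ that lifts it, it is. Such a lift exists because the stabiliser of $v_0$ surjects onto $O(v_0^\perp/v_0)$, by an Eichler-type argument using that $v_0$ has divisibility $1$ and that $v_0^\perp/v_0$ contains a copy of $U$. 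This step, and ensuring the lift lies in the monodromy group and preserves orientation, is the main obstacle. If it fails, we would instead compose with a Hodge automorphism of $\Hprim^4(C)$ coming from $\mathrm{Aut}(C)$, or appeal to the moduli description in \cite{ijtwists}.

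\textbf{Step 2: Torelli.} With Step 1 done, $\marking_X(\sigma_X)$ and $\sigma_Y$ have proportional images modulo $v_0$, so after rescaling $\marking_X(\sigma_X) = \sigma_Y + t v_0$ for some $t \in \bbC$. Hence $(X,\marking_X)$ and $(Y_t,\marking_{Y_t})$ have the same period, where $\marking_{Y_t}$ is obtained by parallel transport along the Tate--Shafarevich family; this transport fixes $v_0$ and stays in the same component. By Verbitsky's global Torelli theorem the two marked points are inseparable, so $X$ and $Y_t$ are bimeromorphic. They are birational because both are projective: $X$ is assumed projective, and $Y_t$ is projective since it is bimeromorphic to $X$ and so inherits a big class (Huybrechts' projectivity criterion).

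\textbf{Step 3: the nef case.} Inseparable points yield a correspondence whose induced map $\phi^*\colon \rH^2(Y_t,\bbZ) \to \rH^2(X,\bbZ)$ is the parallel transport $\marking_X^{-1}\circ\marking_{Y_t}$, which sends $v_{Y_t}$ to $v$. If $v$ is nef, it is a nef isotropic class on $X$, so it induces a Lagrangian fibration $X \to \bbP^5$: the hyperkähler SYZ conjecture is known for $\OG$ type by Mongardi--Onorati. The birational map $\phi\colon X \dashrightarrow Y_t$ identifies the section rings $\bigoplus_k \rH^0(kv)$ and $\bigoplus_k \rH^0(kv_{Y_t})$, because birational maps of hyperkähler manifolds are isomorphisms in codimension one. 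It therefore commutes with the two morphisms to $\mathrm{Proj}$, i.e.\ with the Lagrangian fibrations, up to an automorphism of $\bbP^5$.
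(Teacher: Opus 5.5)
Your Steps 1 and 2 are essentially the paper's proof of the first claim: split $\rH^2(X,\bbZ)=\langle v,w\rangle\oplus\langle v,w\rangle^\perp$ with $\langle v,w\rangle^\perp\simeq v^\perp/v$, match periods so that $\sigma_X\mapsto\sigma_C+a\isocls$, and apply birational Torelli for $\OG$. The step you call the main obstacle is not one.
\begin{itemize}
\item The lift of the Hodge isometry $h$ on $v_0^\perp/v_0$ is just $\mathrm{id}_{\langle v_0,w_0\rangle}\oplus h$; no Eichler argument is needed.
\item Orientation is harmless for the first claim. $-\marking_X$ has the same period as $\marking_X$, and $\mathrm{Mon}^2=O^+(\Lambda_{\OG})$ (this is Onorati's theorem, not Markman's). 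So one of $(X,\pm\marking_X)$ lies in the component of $(Y_t,\marking_{Y_t})$.
\item In the nef case orientation is automatic. Any Hodge isometry sending $v\mapsto v_{Y_t}$, with both classes nonzero in the closed positive cones, maps positive cone to positive cone and hence preserves orientation.
\item The statement does not assume $X$ projective, contrary to what you wrote. For the first claim, ``birational'' then simply means bimeromorphic.
\end{itemize}

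The genuine gap is in Step 3. The bimeromorphic map $\phi$ coming from inseparability does \emph{not} in general satisfy $\phi^*=\marking_X^{-1}\circ\marking_{Y_t}$. Huybrechts' theorem gives a cycle $\Gamma=\overline{\Gamma_\phi}+\sum_i Y_i$ whose action on $\rH^2$ is the parallel transport, and the extra components $Y_i$ can act nontrivially. For example, take a K3 surface $S$ with a smooth rational curve $R$. Then $(S,\marking)$ and $(S,\marking\circ s_R)$ are inseparable, $\phi=\mathrm{id}$, and $\Gamma=\Delta+R\times R$ induces the reflection $s_R$.

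What is true is Markman's result \cite[Theorem 1.6]{markmanmonodromy}: one can choose $\phi$ so that $\phi^*\circ\rho=w$ is a product of reflections in prime exceptional divisors. You must then show $w(v)=v$, and this is exactly where nefness enters. Both $v$ (nef) and $w(v)=\phi^*v_{Y_t}$ (in the closure of the birational K\"ahler cone) pair nonnegatively with every prime exceptional divisor. The fundamental-chamber argument for hyperbolic reflection groups then forces $w(v)=v$. In your write-up nefness is only used to obtain the fibration on $X$. That should have been a warning sign, since $\phi^*v_{Y_t}$ always lies in the closure of the birational K\"ahler cone, whereas an isotropic class satisfying only the hypotheses of the first part need not.

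You also need to treat non-projective $X$. The paper notes that $\Pic(X)$ then has no class of positive square, while $\sigma_X^\perp\cap\rH^2(X,\bbR)$ has signature $(1,21)$. Hence $\Pic(X)$ contains a unique isotropic ray, so $\phi^*v_{Y_t}=\pm v$, and the sign is fixed by the positive cone. Once $\phi^*v_{Y_t}=v$ is established, your section-ring argument for compatibility with the fibrations is fine.
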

For the Lagrangian fibration on $X$ induced by the nef class $v \in \rH^{1,1}(X,\bbZ)$, we mean the Lagrangian fibration $\pi \colon X \to \bbP^5$ such that $v = c_1(\pi^*\cO_{\bbP^5}(1))$ guaranteed by the SYZ conjecture, which is known in this case by \cite[Theorem 2.2]{mongardionorati}.
\begin{proof}[Proof of Theorem \ref{thm:lagfibthm}]
    Recall that the period of $\overline{\IJ}(C)$ for $C$ a very good cubic fourfold was determined in \cite[Corollary 4.7]{ijtwists} to coincide with
    \[U \oplus \Hprim^4(C,\bbZ)(1),\]
    where $U = \langle\isocls,\theta \rangle$ for $\isocls$ the isotropic class on $\overline{\IJ}(C)$ pulled back from $c_1(\cO_{\bbP^5}(1))$ on the base and $\theta$ a relative polarization for the Lagrangian fibration (which we may take to be isotropic as well). On the other hand, by \cite[Theorem 1.2]{abashevarogov}, the Tate--Shafarevich twists of $\overline\IJ(C)$ are realized by the degenerate twistor deformations of $\overline\IJ(C)$, which Hodge-theoretically correspond to replacing the symplectic form $\sigma_C$ of $\overline\IJ(C)$ by $\sigma_C + t\isocls$ for various $t \in \bbC,$ cf.\ \cite[Rem.\ 4.8]{ijtwists}. \par 
    By the birational Torelli theorem for manifolds of OG10-type, which follows from \cite[Theorem 5.4]{onoratithesis} and \cite[Corollary 6.3]{huybrechtstorelli}, it suffices to check that the period of $X$ coincides with the period of some degenerate twistor deformation of $\overline\IJ(C).$ Indeed, by Lemma \ref{lem:isotropic}, we have a (non-Hodge) isometry
    \[\rH^2(X,\bbZ) \simeq U \oplus v^\perp/v \simeq U \oplus \Hprim^4(C,\bbZ)(1),\]
    where $U = \langle v,w\rangle$ for some isotropic vector $w \in \rH^2(X,\bbZ)$ such that $(v,w) = 1.$ \par 
    Since $v$ is a Hodge class, it follows that the symplectic form $\sigma_X \in H^2(X,\bbC)$ of $X$ is necessarily orthogonal to $v$; then by the choice of the Hodge structure on $\Hprim^4(C,\bbZ)(1)$, under the isometry above, we have an identification 
    \[\sigma_X = (av,\sigma_C) \in U_\bbC \oplus \Hprim^4(C,\bbC)(1)\]
    for some $a \in \bbC.$ \par 
    Taking the natural isometry of lattices 
    \begin{equation}
        \rH^2(X,\bbZ) \iso \rH^2(\overline\IJ(C),\bbZ) \label{eqn:twistisometry}
    \end{equation}
    which is the identity on $\Hprim^4(C,\bbZ)(1)$, sending $v \mapsto \isocls$ and $w \mapsto \theta$, we find that the symplectic form of $\rH^2(X,\bbZ)$ is identified with $\sigma_C + a\isocls$. This proves the first claim. \par 
    For the second claim, we proceed as in \cite[Theorem 1.3, Step 2]{markmanlagrangian}. Suppose first that $X$ is non-projective. Since $\langle\operatorname{Re}\sigma,\operatorname{Im}\sigma_X\rangle$ span a positive-definite subspace, it follows that $\rH^2(X,\bbR) \cap \sigma_X^\perp$ has signature $(1,21)$, so by an easy lattice-theoretic argument any two linearly independent isotropic classes in $\Pic(X)$ cannot be orthogonal to each other. On the other hand, by Huybrechts' projectivity criterion \cite[Theorem 2]{basicresultserratum}, since $X$ is non-projective there is no class of positive square in $\Pic(X)$. It follows that there can be only one isotropic class in $\Pic(X)$ up to scaling. If we let $\phi \colon X \dashrightarrow Y$ be the birational map guaranteed by the first part, then in particular $\phi^*\isocls = \pm v$. Since both $\phi^*\isocls$ and $v$ are in the closure of the positive cone, it follows that $\phi^*\isocls = v$. \par 
    Now assume that $X$ is projective. Observe first that by choosing $w$ to lie in the closure of the positive cone as well, we may ensure that the isometry \eqref{eqn:twistisometry} is orientation-preserving and hence by \cite[Theorem 5.4]{onoratithesis} can be interpreted as a parallel-transport Hodge isometry 
    \[\rho \colon \rH^2(X,\bbZ) \iso \rH^2(Y_t,\bbZ).\]
    In particular, by \cite[Theorem 1.6]{markmanmonodromy} there exists a birational map $\phi \colon X \dashrightarrow Y_t$ so that the induced isometry
    \[ w = \phi^* \circ \rho \colon \rH^2(X,\bbZ) \iso \rH^2(X,\bbZ)\] 
    is a composition of reflections along prime exceptional divisors in $\Pic(X)$, i.e., prime divisors of negative BBF square. Since $v$ is nef, and $\phi^* \circ \rho(v) = \phi^*(\isocls)$ lies in the closure of the birational K\"ahler cone, by \cite[Proposition 5.6]{markmanmonodromy} both pair nonnegatively with every prime exceptional divisor. By a standard argument from the theory of hyperbolic reflection groups, since both $v$ and $w(v)$ pair nonnegatively with the prime exceptional divisors, we must have $w(v) = v$, cf. \cite[Theorem 1.3, Step 2.2]{markmanlagrangian}. \par 
    It follows then that $v = \phi^*\isocls,$ so the birational map $\phi$ respects the Lagrangian fibrations induced by $v$ and $\isocls$ respectively. 
\end{proof}
\begin{remark}
    By the surjectivity of the period map, there are still certainly Lagrangian-fibered hyperk\"ahler manifolds $X$ with primitive isotropic classes $v \in \rH^2(X,\bbZ)$ such that the period of $v^\perp/v$ is in $\cC_2$ or $\cC_6$, and it is interesting to ask how to geometrically access these hyperk\"ahler manifolds. \par 
    For $\cC_2$, we expect that these arise as desingularizations of singular Beauville--Mukai systems, i.e., $X = \widetilde M_H(0,2H,2)$ where $(S,H)$ is a K3 surface with $H^2 = 2$. In this case $X$ is a resolution of a moduli space of one-dimensional sheaves on curves, and taking the support gives a Lagrangian fibration on $X$, cf.\ \cite[Sec.\ 5.3]{klsv}. \par 
    For $\cC_6$, these should arise as the desingularized moduli spaces $X = \widetilde M_H(2,0,4)$ for $(S,H)$ a K3 surface with $H^2 = 6$. As was pointed out in \cite{hwangnagai}, $X$ will be birational to the intermediate Jacobian fibration for the singular cubic fourfold associated to $S$. 
\end{remark}

\section{Cohomological FM transforms from Poincar\texorpdfstring{\'e}{é} equivalences} \label{sec:cohomFM}
A key ingredient in our proofs will be an explicit understanding of the action of the Poincar\'e sheaf on the twisted extended Mukai lattice of the cohomology of certain compactified group schemes. \par 
Throughout this section, we assume that the $2n$-dimensional Lagrangian-fibered hyperk\"ahler 
\[\pi \colon X \to \bbP^n\]
is either the compactified Jacobian fibration $X = \overline\Pic^0(S,L)$ for a linear system of integral curves on a polarized K3 surface $(S,L)$ or is the LSV fibration $X = \overline\IJ(C)$ for a very good cubic fourfold $C$. In either of these cases, there is a distinguished rank 4 lattice of the extended Mukai lattice 
\[\langle e,f,\isocls,\theta\rangle \subset \rHt(X,\bbQ),\]
where $\isocls = c_1(\pi^*\cO_{\bbP^n}(1)) \in \rH^2(X,\bbZ)$ and $\theta \in \rH^2(X,\bbQ)$ is a generalized theta divisor for $\pi$ in the sense of \cite{generalizedbeauville}, i.e., a $\pi$-relatively ample $\bbQ$-divisor satisfying $\theta^2 = 0$ and $(\isocls,\theta) = 1$. In the $\Kthreen$ case, such a $\theta$ exists by \cite[Section 2.2]{generalizedbeauville}, while in the $\OG$ case we may even take $\theta \in \rH^2(X,\bbZ)$ integral by \cite[Lemma 3.5 and Proposition 3.6]{sacca}.

\begin{proposition}\label{prop:rank4sublattice}
    Let $\cP$ on $X \times X$ be the Poincar\'e sheaf in the $t = 0$ case of Theorem \ref{thm:poincare}. There exists a constant $\cnstsign = \pm 1$ (with $\cnstsign = 1$ when $n$ is odd) such that $\Phi^{\rHt}_\cP$ acts as 
    \begin{align*}
        f &\mapsto \cnstsign\isocls, & \theta &\mapsto \cnstsign\left(e - \frac{n+1}{2}\isocls\right),\\
        \isocls &\mapsto -\cnstsign f, & e &\mapsto -\cnstsign \left(\theta - \frac{n+1}{2}f\right).
    \end{align*}
\end{proposition}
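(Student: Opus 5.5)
Since $\Phi_\cP$ is an untwisted derived autoequivalence of the hyperk\"ahler manifold $X$, Example~\ref{ex:taelmanllv} gives an LLV-equivariant Hodge isometry $\Phi^\rH_\cP$ and an induced Hodge isometry $\Phi^{\rHt}_\cP$ of $\rHt(X,\bbQ)$. The plan is to compute $\Phi^{\rHt}_\cP$ on $\langle e,f,\isocls,\theta\rangle$ by testing $\Phi^\rH_\cP$ on explicit sheaves whose Mukai vectors lie in $\mathrm{SH}^*(X,\bbQ)$, and then to pass to $\rHt$ through Verbitsky's isomorphism $\Psi$. Recall that, up to normalisation, $\Psi$ sends $1\mapsto e^n$ and the point class $[pt]\mapsto f^n$. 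More generally, $\exp(\lambda)\mapsto (e+\lambda+\tfrac{\lambda^2}{2}f)^n$ (this is Example~\ref{ex:explambdallv} applied to $1$), and $\isocls^k\mapsto$ a multiple of $\isocls^k f^{n-k}$.

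\textbf{Key transforms.} The Poincar\'e sheaf exchanges skyscrapers and fibres. Concretely, $\Phi_\cP(\cO_x)=\cP|_{X\times\{x\}}$ is a rank one sheaf on the fibre $\pi^{-1}(\pi(x))$. Its Mukai vector is $[F]\cdot(\text{correction})$, where $[F]$ is the fibre class, a multiple of $\isocls^n$. Conversely, $\Phi_\cP(\cO_F)$ for a fibre $F$ (up to shift) is a skyscraper at the origin. Hence $\Phi^\rH_\cP$ swaps $\langle f^n\rangle$ and $\langle \isocls^n\rangle$ up to scalars. Taking $n$-th roots in $\Sd$ yields $f\mapsto \pm\isocls$ and $\isocls\mapsto\pm f$. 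The relative sign between the two images is fixed by the isometry condition: $(f,\isocls)=0$ always, but $\Phi^{\rHt}_\cP$ must preserve pairings with the images of $e$ and $\theta$. Next, use the fact that $\cP$ is the relative Fourier--Mukai transform on an abelian fibration, so it takes the section/zero-section sheaf and the relative theta line bundle $\cO(\theta)$ to known objects. Over the smooth locus, $\Phi_\cP(\cO_X)$ is supported on the zero section $s(\bbP^n)$ (shifted by $n$). Its Mukai vector is $[s(\bbP^n)]$, which one computes as a combination $\theta^n/n!$ plus lower-order corrections in $\isocls$ and $f$ (use $\mathrm{td}$ and Grothendieck--Riemann--Roch along $\bbP^n$). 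Projecting to $\Sd\rHt$, this gives the image of $e$ as $-\cnstsign(\theta-\tfrac{n+1}{2}f)$. The $\tfrac{n+1}{2}$ comes from $c_1(\bbP^n)=(n+1)H$ through $\sqrt{\mathrm{td}}$, equivalently from the normal bundle $\Omega_{\bbP^n}$ of the Lagrangian section. Finally, $\theta$'s image is forced by orthogonality: $\theta\perp\theta,f$ and $(\theta,\isocls)=1$, $(\theta,e)=0$. Solving these in the span $\langle e,\isocls,\theta,f\rangle$ gives $\cnstsign(e-\tfrac{n+1}{2}\isocls)$ once the other three images are known. Alternatively, one can use $\Phi_\cP\circ\Phi_\cP\simeq(-1)^*[-2n]$ (Arinkin), which squares to $\pm$id on $\rHt$ and determines $\theta$'s image from $e$'s.

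\textbf{Obstacles.} We also need that the sublattice is preserved at all, i.e.\ that no transcendental component appears. This follows because the test classes above lie in the subalgebra generated by $e,f,\isocls,\theta$. It also follows from the rigidity given by the degree-reversing property (Proposition~\ref{prop:extmukaiprop}\ref{proppart:extmukaigrading}, applied to the composition with $\exp(\theta)$ as in Lemma~\ref{lem:posrankdegreversing}). The sign $\cnstsign$ reflects the ambiguity in Theorem~\ref{thm:taelman}, which is absent when $n$ is odd, so $\cnstsign=1$ there.

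The main obstacle is computing the Mukai vector of the zero section (equivalently $\Phi_\cP(\cO_X)$) precisely enough to get the constant $\tfrac{n+1}{2}$. My first approach is to use that the zero section is Lagrangian with normal bundle $\Omega_{\bbP^n}$, apply GRR for $s\colon\bbP^n\to X$, and expand $\sqrt{\mathrm{td}(X)}$ only in the LLV-relevant part. I will cross-check the result with the isometry constraints: $e\mapsto -\cnstsign(\theta+cf)$ must have square $0$ and pair to $-1$ with $f$'s image $\cnstsign\isocls$, which fixes $c$ only once combined with the image of $\theta$. In the OG10 case the same argument goes through using Yu's Poincar\'e sheaf and Sacc\`a's integral $\theta$.
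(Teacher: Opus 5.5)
Your outline has the same shape as the paper's: points $\leftrightarrow$ fibres for $f$ and $\isocls$, and structure sheaf $\leftrightarrow$ zero section for the rest. However, the step that produces the constant $\frac{n+1}{2}$ is never established. The mechanisms you propose for it either leave it undetermined or give the wrong value.

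Every constraint you invoke (isometry, $f\mapsto\cnstsign\isocls$, $\isocls\mapsto-\cnstsign f$, orthogonality, isotropy) is satisfied by the whole family $\exp(c\isocls)\circ\widetilde\phi_0$, i.e. $e\mapsto-\cnstsign(\theta+cf)$, $\theta\mapsto\cnstsign(e+c\isocls)$, for every $c\in\bbQ$. Shifting $c$ by an integer $m$ is exactly what replacing $\cP$ by $\cP\otimes\mathrm{pr}_2^*\pi^*\cO(m)$ does. That kernel still sends points to sheaves on fibres, fibres to points, and $\cO_X$ to a sheaf on the zero section. So $c=-\frac{n+1}{2}$ can only come from the precise normalization of $\cP$. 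Your cross-check is circular, since you solve for the image of $\theta$ from that of $e$.

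Your zero-section computation, as set up, gets the sign wrong. For the normalized $\cP$ one has $\Phi_\cP(\cO_X)\simeq s_*\omega_{\bbP^n}[-n]$, not $\cO_Z[-n]$; this follows by applying the paper's inversion formula to $\Phi_\cP(\cO_Z)\simeq\cO_X$. GRR gives $v(s_*\cO_{\bbP^n})=\exp(\frac{n+1}{2}\isocls)\cdot[Z]$. Without the $\omega_{\bbP^n}$-twist you therefore land on $e\mapsto-\cnstsign(\theta+\frac{n+1}{2}f)$, with the wrong sign. You also silently replace $\widetilde v(\cO_X)=e+c_Xf$ (it is $e+2f$ for OG10 in the paper) by $e$. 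Finally, you need the exact component of $[Z]$ in $\mathrm{SH}^*(X,\bbQ)$. Under $\Psi$ it must be a multiple of the $n$-th power of $\cnstsign(\theta-c_X\isocls)$; it has no $f$-term because $[Z]$ has pure degree $2n$. This is precisely what you leave as ``lower-order corrections''.

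Your alternative is incorrect as stated. From $\Phi_\cP^{-1}\simeq\Phi_{\cP^\vee\otimes\mathrm{pr}_2^*\omega_{X/\bbP^n}}[n]$ one gets $\Phi_\cP\circ\Phi_\cP\simeq(-1)^*\circ(-\otimes\pi^*\omega_{\bbP^n})[-n]$. Hence $(\Phi^{\rHt}_\cP)^2$ is $\pm\exp(-(n+1)\isocls)$ composed with the isometry induced by $(-1)^*$ (which fixes $e,f,\isocls,\theta$), not $\pm\mathrm{id}$. Imposing $\pm\mathrm{id}$ on the family above forces $c=0$, contradicting the statement. With the correct formula the argument does force $c=-\frac{n+1}{2}$. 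This twist by $\omega_{X/\bbP^n}=\cO((n+1)\isocls)$ is exactly the input the paper uses, via $(\Phi^{\rHt}_\cP)^{-1}=-\exp((n+1)\isocls)\cdot\Phi^{\rHt}_{\cP^\vee}$ together with $\Phi_\cP(\cO_Z)\simeq\cO_X\simeq\Phi_{\cP^\vee}(\cO_Z)$.

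Separately, orthogonality does not keep the image of $\theta$ inside $\langle e,f,\isocls,\theta\rangle$. It only gives $\cnstsign(e-\frac{n+1}{2}\isocls)+df+w'$ with $w'\in\Hlprim^2(X,\bbQ)$ and $w'^2=2\cnstsign d$. To kill $w'$ you must do as the paper does: pass to a very general member, where the only Hodge classes in $\rH^2$ are $\isocls$ and $\theta$, so the Hodge isometry confines everything to the rank-4 lattice; then spread out in the family. Your appeal to the degree-reversing property does not help. Lemma~\ref{lem:posrankdegreversing} needs a kernel of positive rank, and $\cP$ has rank $0$ on $X\times X$. Moreover, $f\mapsto\cnstsign\isocls$ shows that no composition of $\Phi^{\rHt}_\cP$ with $\exp$-twists is degree-reversing.
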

\begin{proof}
    In the case of a compactified Jacobian, this is \cite[Proposition 10.4]{beckmann}, and is presented in the form above in \cite[Proposition 3.9]{generalizedbeauville}. \par 
    In the case of the LSV fibration $X = \overline\IJ(C)$ for a very good cubic fourfold $C$, the proof is essentially the same and uses the theory of extended Mukai vectors. We sketch the idea here. Note that as $n = 5$ is odd, so in this case we will expect $\cnstsign = 1$. \par 
    To check that $\Phi_\cP^{\rHt}(f) = \isocls$, observe that for any point supported on the zero section of the smooth locus of $X \to \bbP^n$, $\Phi_\cP(\cO_{\mathrm{pt}}) \simeq \pi^*{\cO}_{\mathrm{pt}}$, and as $v(\pi^*{\cO}_{\mathrm{pt}}) = \isocls^n$ the claim follows from \cite[Equation (4.10)]{beckmann}. \par 
    Next, observe that the inverse Fourier--Mukai functor is given by 
    \[\Phi_{\cP}^{-1} = \Phi_{\cP^\vee \otimes \mathrm{pr}_2^*\omega_{X/\bbP^n}[5]},\]
    where $\cP^\vee$ is the dual to $\cP$ on $X \times_{\bbP^n}X.$ In particular,
    \[(\Phi_\cP^{\rHt})^{-1} = -\Phi^{\rHt}_{\cP^\vee\otimes\mathrm{pr}_2^*\cO(6)} = -\exp(6\isocls) \cdot \Phi_{\cP^\vee}^{\rHt}.\]
    Since a similar argument shows that $\Phi^{\rHt}_{\cP^\vee}(f) = \isocls$, and since by \cite[Proposition 3.2]{taelman} the action of $B_{6\isocls} \coloneq \exp(6\isocls) \cdot \colon \rHt(X,\bbQ) \to \rHt(X,\bbQ)$ preserves $\isocls$, it follows that $(\Phi_{\cP}^{\rHt})^{-1}(f) = -\isocls.$ \par 
    To see the remaining classes, we consider the extended Mukai vector for the structure sheaf of the zero section\footnote{The zero section exists since by \cite[Theorem 1]{verygoodcubics} the LSV fibration has integral fibers as long as the cubic fourfold is very good. This was also observed in \cite[Corollary 4.8]{sacca} for a general cubic fourfold.} $\cO_Z$, which is sent under the functor $\Phi_{\cP}$ to $\cO_X$. Taking $v = \widetilde v(\cO_Z)$ as this extended Mukai vector, observe that we have
    \[\Phi^{\rHt}_{\cP}(v)= e + 2f\]
    and 
    \[\Phi^{\rHt}_{\cP^\vee}(v) = e + 2f\]
    by the normalization of the Poincar\'e sheaf \cite{yulsv} and \cite[Definition 4.1 and Equation (4.5)]{beckmann}. \par 
    When the cubic fourfold $C$ is very general, the only Hodge classes in $\rH^2(X,\bbQ)$ are $\isocls$ and $\theta$ by \cite[Lemma 3.5]{sacca}, so in that case we may write $v = ae + b\isocls + c\theta + df$ and determine coefficients using the equations above and the BBF form. Since the LSV construction works in families, the same holds for all very good cubic fourfolds. 
\end{proof}

As a consequence, it can be useful to separately treat this rank 4 lattice and its orthogonal complement. This motivates the following: 
\begin{definition} \label{def:lagprimcoh}
    Suppose $\pi \colon X \to \bbP^n$ is either $\overline\Pic^0(S,L)$ for a linear system of integral curves on a polarized K3 surface $(S,L)$ or $\overline\IJ(C)$ for a very good cubic fourfold $C$. The \textit{rational Lagrangian primitive cohomology} of $X$ is the rational sublattice 
    \[\Hlprim^2(X,\bbQ) \coloneq \langle \isocls,\theta\rangle^\perp \subset \rH^2(X,\bbQ),\]
    and the \textit{integral Lagrangian primitive cohomology} is 
    \[\Hlprim^2(X,\bbZ) \coloneq \Hlprim^2(X,\bbQ) \cap \rH^2(X,\bbZ).\]
\end{definition}
When $X$ is a very general hyperk\"ahler either of the form $\overline\Pic^0(S,L)$ or $\overline\IJ(C)$, the rational Hodge classes in $\rH^2(X,\bbQ)$ are generated by $\isocls$ and $\theta$, so in that case $\Hlprim^2(X,\bbZ) = \rH^2_\mathrm{tr}(X,\bbZ)$ coincides with the transcendental sublattice. 
\begin{example} \label{ex:lagprimk3n}
    If $X = \overline\Pic^0(S,L)$ for a linear system of integral curves on a polarized K3 surface $(S,L)$, then it follows from the usual description of $\rH^2(X,\bbZ)$ \cite{k3nhodgestructure} that \[\Hlprim^2(X,\bbZ) \simeq \Hprim^2(S,\bbZ),\] see also \cite[Lemma 4.1]{markmanlagrangian}.
\end{example}
\begin{example} \label{ex:lagprimog10}
    If $X = \overline\IJ(C)$ for a very good cubic fourfold $C$, then by \cite[Theorem 4.6]{ijtwists} 
    \[\Hlprim^2(X,\bbZ) \simeq \Hprim^4(C,\bbZ)(1).\]
\end{example}

Despite the fact that the cohomological action of the Poincar\'e equivalence may have denominators in the level of the rank 4 sublattice $\langle f,\isocls,\theta,e\rangle$, it is in fact extremely simple on the Lagrangian primitive cohomology, where we can prove the following. \par 

\begin{proposition} \label{prop:primitivepoincare}
    Suppose that $X = \overline\Pic^0(S,L)$ for a linear system of integral curves on a polarized K3 surface $(S,L)$ or $X = \overline\IJ(C)$ for a very good cubic fourfold $C$. Then the isometry $\Phi_{\cP}^{\rHt}$ restricts to an integral self isometry 
    \[\Hlprim^2(X,\bbZ) \iso \Hlprim^2(X,\bbZ)\]
    given by multiplication by $\pm 1$, where the sign depends only on the deformation type of $X$. 
\end{proposition}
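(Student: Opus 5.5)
Since $\Phi_\cP^{\rHt}$ is an isometry of $\rHt(X,\bbQ)$, it sends the orthogonal complement of the rank 4 lattice $\langle e,f,\isocls,\theta\rangle$ to the orthogonal complement of its image. By Proposition~\ref{prop:rank4sublattice}, this lattice is mapped onto itself. Inside $\rHt(X,\bbQ)=\bbQ e\oplus\rH^2(X,\bbQ)\oplus\bbQ f$, its orthogonal complement is exactly $\Hlprim^2(X,\bbQ)$. Hence $\Phi_\cP^{\rHt}$ restricts to a rational isometry $g$ of $\Hlprim^2(X,\bbQ)$, and the task is to show $g=\pm\mathrm{id}$ with a sign depending only on the deformation type. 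Integrality then comes for free.

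The plan is to show that $g$ is a Hodge isometry which commutes with monodromy in the family, and then conclude by irreducibility. The steps are as follows.

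\begin{enumerate}
\item By Example~\ref{ex:taelmanllv} and Proposition~\ref{prop:extmukaiprop}\ref{proppart:extmukaihodge}, $\Phi^{\rHt}_\cP$ is a Hodge isometry. Therefore $g$ is a Hodge isometry of $\Hlprim^2(X,\bbQ)$.
\item Place $X$ in its natural family: the relative compactified Jacobian over the moduli of polarized K3 surfaces $(S,L)$ with $|L|$ integral, or the LSV family over the moduli of very good cubic fourfolds. The Poincar\'e sheaf exists in families (Arinkin, Yu), so $g$ is flat in the local system $\Hlprim^2$ over this base. By Example~\ref{ex:lagprimk3n} and Example~\ref{ex:lagprimog10}, $\Hlprim^2$ is identified with $\Hprim^2(S)$, respectively $\Hprim^4(C)(1)$.
\item Take a very general member. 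Its transcendental lattice is all of $\Hlprim^2(X,\bbQ)$, an irreducible Hodge structure with endomorphism field $\bbQ$. So $g$, a Hodge endomorphism, is a rational scalar $c$. Since $g$ is an isometry, $c^2=1$, so $c=\pm1$.
\item Because $g$ is flat and $\pm\mathrm{id}$ on a dense set, it equals the same $\pm\mathrm{id}$ on the whole connected family. This gives the claim for every $X$ in the family, including integrality.
\item The sign is constant on each connected family. For the K3 case one should note it does not depend on $n$ beyond the deformation type.
\end{enumerate}

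An alternative to steps 2–4 is to argue directly. By Taelman, $\Phi^{\rHt}_\cP$ commutes with the monodromy of the family of Lagrangian fibrations. That monodromy acts on $\Hlprim^2$ through an arithmetic group that is Zariski dense in $\mathrm{O}(\Hlprim^2)$. An element of $\mathrm{O}(\Hlprim^2)$ commuting with a Zariski dense subgroup is central, hence $\pm1$.

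The main obstacle is step 2: justifying that the Poincar\'e kernel, and hence $\Phi^{\rHt}_\cP$, varies flatly in the family. This requires knowing that the normalized Poincar\'e sheaf (normalized along the zero section) is constructed relatively over the moduli base, so that its Mukai vector is a flat section. I would first cite the relative versions of Arinkin's and Yu's constructions. If that is unavailable, I would instead use Taelman's functoriality in Theorem~\ref{thm:taelman} together with the flatness of the Chern character of a family of kernels.
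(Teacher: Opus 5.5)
Your argument is correct, but it reaches $g=\pm\mathrm{id}$ by a different route than the paper.

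The paper never uses a point at which $\Hlprim^2$ contains no Hodge classes. It works one class at a time. Given a primitive $\lambda\in\Hlprim^2(X,\bbZ)$ of large negative square, it specializes $X$ to a very general point of a Noether--Lefschetz divisor on which $\lambda$ spans the Hodge classes of $\Hlprim^2$, so the Hodge isometry must send $\lambda\mapsto\pm\lambda$. It then shows the signs agree by testing on $p\lambda_1+q\lambda_2$. Finally, Lemma \ref{lem:primitivedifference} shows that such classes generate the lattice.

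You instead specialize once, to a very general member, and use a stronger Hodge-theoretic input: there, $\mathrm{End}_{\mathrm{Hdg}}(\Hlprim^2(X,\bbQ))=\bbQ$. This follows from Zarhin, or equivalently from the generic Mumford--Tate group being the full special orthogonal group. Irreducibility alone would not suffice, since the endomorphism algebra could a priori be a larger totally real or CM field; the endomorphism-field statement you quote is exactly what is needed.

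Your route is shorter. It also avoids the issues in Remark \ref{rem:largenegativesquare}: the need to stay off small-square Noether--Lefschetz divisors, where the period map misses very good cubics or the Poincar\'e sheaf may not exist. It likewise avoids the lattice lemma. The paper's route uses only the elementary fact that a Hodge isometry preserves a one-dimensional space of Hodge classes, at the price of many specializations and a sign-gluing step.

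Both arguments rest equally on the local constancy of $\Phi_\cP^{\rHt}$ in the Beauville--Mukai and LSV families. The paper also takes this for granted (``both constructions work in families''), so your step 2 is no weaker than what the paper uses. Your monodromy alternative is not independent of step 2, though. That $g$ commutes with monodromy is precisely the flatness of the family of kernels, not a consequence of Taelman's results.
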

\begin{proof}
    It follows immediately from Proposition \ref{prop:rank4sublattice} that $\Phi_{\cP}^{\rHt}$ restricts to a rational isometry 
    \[\Hlprim^2(X,\bbQ) \iso \Hlprim^2(X,\bbQ),\]
    so it suffices to show that $\Phi_{\cP}^{\rHt}(\lambda) = \pm \lambda$ for a consistent choice of sign whenever $\lambda \in \Hlprim^2(X,\bbZ).$ \par 
    Suppose first that $\Hlprim^2(X,\bbZ)$ has a unique Hodge class $\lambda$ up to scaling. As the BBF form on the algebraic part of $H^2(X, \mathbb Z)$ is non-degenerate, we have $(\lambda, \lambda) \neq 0$. Since $\Phi_{\cP}^{\rHt}$ is a Hodge isometry it follows immediately that $\Phi_{\cP}^{\rHt} = \pm \lambda.$ \par 
    Now let $\lambda \in \Hlprim^2(X,\bbZ)$ be any primitive class of large negative square (see also Remark~\ref{rem:largenegativesquare} for a more precise statement on what is meant by ``large negative square''). Since both the constructions of Beauville--Mukai and of LSV work in families, by using either Example \ref{ex:lagprimk3n} or Example \ref{ex:lagprimog10} and deforming to a very general point in a Noether--Lefschetz divisor of the moduli space of $L$-polarized K3 surfaces or of cubic fourfolds, we may assume that $\lambda$ is the unique integral Hodge class in $\Hlprim^2(X,\bbZ)$ and apply the argument above. \par 
    In fact, the sign must agree for all primitive classes of large negative square: suppose $\lambda_1,\lambda_2 \in \Hlprim^2(X,\bbZ)$ are any two primitive classes of large negative square and note that we can find $p, q \neq 0$ such that 
    \[p\lambda_1 + q\lambda_2\]
    will be primitive of large negative square. Then 
    \[ \pm(p\lambda_1 + q\lambda_2)= \Phi_{\cP}^{\rHt}(p\lambda_1 + q\lambda_2) = \pm p(\lambda_1) + \pm q(\lambda_2),\]
    which forces every sign to coincide. \par 
    Since by the following Lemma \ref{lem:primitivedifference} any vector $v \in \Hlprim^2(X,\bbZ)$ can be written as a difference of two primitive vectors of large negative square, the result follows. 
\end{proof}
\begin{lemma} \label{lem:primitivedifference}
    Let $\Lambda$ be any indefinite integral lattice. For any $v \in \Lambda$ and any $M > 0$, we can write $v = v_1 - v_2$ where $v_1,v_2 \in \Lambda$ are primitive integral classes with $v_1^2,v_2^2 < -M$.
\end{lemma}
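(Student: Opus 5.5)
The idea is to take $v_2 = a$ and $v_1 = v + a$ for an auxiliary vector $a$. Both squares are forced to be very negative by letting $a$ be dominated by a large multiple of a fixed negative vector. Primitivity is arranged by a coordinate argument in a well-chosen $\bbZ$-basis.

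\emph{Choosing the basis.} Since $\Lambda$ is indefinite, it has rank at least $2$ and contains a vector of negative square. After dividing by its divisibility we may take this vector $w$ primitive with $w^2<0$. A primitive vector of a free $\bbZ$-module extends to a $\bbZ$-basis, so choose a basis $e_1 = w, e_2, \dots, e_r$ of $\Lambda$ with $r \geq 2$. Recall that a vector is primitive exactly when the gcd of its coordinates in this basis is $1$. Write $v = \sum v_i e_i$.

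\emph{Constructing $a$.} Put $a = k e_1 + c e_2$ with $c, k \in \bbZ$ chosen as follows.
\begin{itemize}
\item Choose $c \neq 0$ with $v_2 + c = \pm 1$: take $c = 1 - v_2$ if $v_2 \neq 1$, and $c = -2$ if $v_2 = 1$.
\item Then $v + a$ has $e_2$-coordinate $\pm 1$, so $v+a$ is primitive for every $k$.
\item Restrict $k$ to the integers with $k \equiv 1 \pmod{|c|}$. Then $\gcd(k,c) = 1$, so $a$ is primitive as well.
\end{itemize}

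\emph{Making the squares negative.} We have
\[
a^2 = k^2 w^2 + 2kc\,(w,e_2) + c^2 e_2^2,
\]
and $(v+a)^2 = a^2 + 2(v,a) + v^2$, which is also a quadratic polynomial in $k$ with leading coefficient $w^2<0$. Both therefore tend to $-\infty$ as $k \to \infty$. Choose $k$ in the above residue class and large enough that $a^2 < -M$ and $(v+a)^2 < -M$. Setting $v_1 = v + a$ and $v_2 = a$ gives $v = v_1 - v_2$ with $v_1, v_2$ primitive and $v_1^2, v_2^2 < -M$.

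The only delicate point is making both vectors primitive at the same time. Fixing one coordinate of $v+a$ to be $\pm 1$, and taking $k$ coprime to $c$, handles this. The degenerate case $c = 0$ is avoided by switching to the target value $-1$. Everything else is routine.
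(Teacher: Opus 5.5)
Your proof is correct, and it takes a cleaner route than the paper's. The paper works inside the rank-two saturation of $\langle v,w_1\rangle$ and writes $v=aw_1+bw_2$. Correcting evident typos, it sets $v_1=(p+(k+1)a)w_1+(a+b)w_2$ and $v_2=(p+ka)w_1+aw_2$, so both summands depend on the coordinates of $v$. Primitivity then comes from choosing a prime $p$ coprime to $a$ and invoking Dirichlet's theorem to make $p+(k+1)a$ coprime to $a+b$.

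Your choice $v_2=a$, $v_1=v+a$ decouples the growing auxiliary vector from $v$. Pinning the $e_2$-coordinate of $v+a$ to $\pm1$ makes it primitive for every $k$, and the congruence $k\equiv 1 \pmod{|c|}$ handles $a$, so no input from analytic number theory is needed.

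Your argument is also more robust. The paper's construction tacitly needs three things:
\begin{itemize}
\item $v$ and $w_1$ linearly independent, since otherwise the saturation has rank one;
\item $a\neq 0$, since otherwise the $w_1$-coefficients do not grow with $k$ and $v_2=pw_1$ is not primitive;
\item $a+b\neq 0$, since otherwise $v_1=(p+(k+1)a)w_1$ is not primitive for large $k$.
\end{itemize}
Your argument has no exceptional cases, including $v=0$.

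Two cosmetic points. First, ``dividing by its divisibility'' should mean dividing $w$ by its content, i.e.\ the largest $m$ with $w\in m\Lambda$. It should not mean dividing by $\div(w)$ in the lattice-theoretic sense used elsewhere in the paper, since $w/\div(w)$ need not lie in $\Lambda$. Second, you use $v_2$ both for the $e_2$-coordinate of $v$ and for the summand in the statement.
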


\begin{proof}
    Since $\Lambda$ is indefinite, there is a primitive vector $w_1 \in \Lambda$ of square $w_1^2 < 0$. Then, extend $v_1$ to a basis $w_1, w_2$ of the saturation $\langle v, w_1 \rangle^{\mathrm{sat}} \subset M$ and write $v = aw_1 + b w_2$. Pick some prime $p \gg 0$ coprime to $a$ and set
    \[v_1 \coloneq (p+(k+1)a)w_1 + (a+b)w_2 \text{ and } v_2 \coloneqq (p+ka)v_1 +a v_2\]
    so that $v = w_1 - w_2$. Since $w_1^2 < 0$, we have $\max(v_1^2, v_2^2) < -M$ for $k \gg 0$. Moreover, by Dirichlet's theorem, there are infinitely many $k \in \mathbb N$ satisfying
    $$\gcd(p+(k+1)a, a+b) = 1 = \gcd(p+ka, a),$$
    which implies that $v_1$ and $v_2$ are primitive.
\end{proof}

\begin{remark}
    Since for a very general $X = \overline\Pic^0(S,L)$ as above, the Lagrangian primitive cohomology is just the transcendental cohomology, the integrality of the restriction $\Phi^{\rHt}_\cP|_{\Hlprim^2(X,\bbZ)}$ follows for a very general Beauville--Mukai system from \cite[Corollary 9.3]{beckmann} as a consequence of Beckmann's theory of integral $\Kthreen$ lattices. 
\end{remark}
\begin{remark} \label{rem:largenegativesquare}
In the proof of Proposition~\ref{prop:primitivepoincare}, the assumption that the vectors are of large negative square is crucial, for the following reason: Any integral class of negative square in $\Hlprim(X, \mathbb Z)$ can be made Hodge on a Noether--Lefschetz divisor of the period domain of $L$-polarized K3 surfaces or of cubic fourfolds, but one must avoid certain classes of small negative square to ensure both that the point is in the image of the period map as well as the existence of a Poincar\'e sheaf $\cP$ on $X \times X$, which induces the Hodge isometry
\[\Phi^{\widetilde{\rH}}_{\cP} \colon \Hlprim^2(X, \mathbb Q) \iso \Hlprim^2(X, \mathbb Q).\]
Indeed, in order to obtain a Poincar\'e sheaf on the Beauville-Mukai system $\overline{\Pic}^0(S/|L|)$ for a quasi-polarized K3 surface $(S, L)$, one needs the line bundle $L$ to be ample and all members of the linear system $|L|$ to be integral. Since both conditions are open in the moduli space of quasi-polarized K3 surfaces, this can be ensured by avoiding a finite set of Noether--Lefschetz divisors in the moduli space of $L$-quasi-polarized K3 surfaces (and further closed subsets of codimension at least two).
To obtain a Poincar\'e sheaf on the LSV construction associated to a cubic fourfold, we must instead avoid the Hassett divisors $\cC_2$ and $\cC_6$ to ensure that the Hodge structure is in the image of the period map for cubic fourfolds, and $\cC_8$ and $\cC_{12}$ (as well as a codimension $2$ subset consisting of cubic fourfolds with highly singular hyperplane sections) in order for the cubic fourfold to be very good, see \cite[Corollary 1.4]{marquandviktorova}.
\end{remark}

\begin{remark}
More generally, the strategy of the proof of Proposition~\ref{prop:primitivepoincare} can be used to show that if $U \subset \cD$ is an open subset in a positive-dimensional period domain parametrizing Hodge structures of K3-type with signature $(1,r)$, where the very general point has no integral class in $\rH^{1,1}$, then any rational Hodge isometry $\phi$ that exists over $U$ satisfies $\phi = \pm \mathrm{id}$.
\end{remark}

\section{The main construction}
\label{sec:main_construction}
As an application of the theory of MBM classes, we begin by constructing examples of projective hyperk\"ahler manifolds which admit transverse Lagrangian fibrations. 
\begin{construction} \label{construction:hklattice}
    Fix some integer $k > 0$, and consider the rank two lattice $U(k) = \langle \isocls_1, \isocls_2 \rangle$ with intersection form 
    \[\begin{pmatrix}
        0 & k \\
        k & 0
    \end{pmatrix}.\]
    Fix a copy of $U^{\oplus 2}$ with basis $\gamma_1,\mu_1,\gamma_2,\mu_2$ such that $(\gamma_i,\gamma_j) = (\mu_i,\mu_j) = 0$ and $(\gamma_i,\mu_j) = \delta_{ij}.$ We fix once and for all the primitive embedding 
    \begin{equation}
        \iota \colon U(k) \hookrightarrow U^{\oplus 2}
    \end{equation}
    given by 
    \begin{align*}
        \isocls_1 &\mapsto \gamma_1 \\
        \isocls_2 &\mapsto k\mu_1+\gamma_2.
    \end{align*}
    In particular, the classes $\isocls_1,\isocls_2$ are of divisibility one in $U^{\oplus 2}$. Recall that 
    \[\Lambda_{\Kthreen} \coloneq U^{\oplus 3} \oplus E_8(-1)^{\oplus 2} \oplus \bbZ(2-2n)\]
    and 
    \[\Lambda_{\OG} \coloneq U^{\oplus 3} \oplus E_8(-1)^{\oplus 2} \oplus A_2(-1).\]
    As a consequence, there are primitive embeddings $\iota_{\Kthreen}$ and $\iota_{\OG}$ given by the embedding $\iota$ into the first two copies of $U$ and trivial map in all other factors. Since the signature of the orthogonal complements $\iota_{\Kthreen}(U(k))^\perp$ and $\iota_{\OG}(U(k))^\perp$ are $(2,r)$ for some $r > 0$, the moduli spaces of hyperkähler manifolds of $\Kthreen$- and $\OG$-type polarized by the lattice $U(k)$ are non-empty by surjectivity of the period map. In particular, there is a hyperkähler manifold $Y$ in either deformation type satisfying $\Pic(Y) = U(k) = \langle \isocls_1, \isocls_2 \rangle \subset H^2(Y, \mathbb Z) \simeq \Lambda_{\Kthreen/\OG}$.
    Moreover, up to replacing $\isocls_1, \isocls_2$ with $-\isocls_1, -\isocls_2$ we may assume that $\isocls_1,\isocls_2$ lie on the boundary of $\mathrm{Pos}(Y),$ so in particular $a\isocls_1 + b\isocls_2 \in \mathrm{Pos}(Y) = \mathrm{Kah}(Y)$ for $a, b > 0$, and $Y$ is projective. \par 
    In the $\OG$ case, observe that there is a surjection 
    \[\{[v] \in \bbP((\Lambda_{\OG})_\bbC) \mid v \in \isocls_i^\perp, v^2 = 0, (v,\overline v) > 0\} \twoheadrightarrow \{[v] \in \bbP(\isocls_i^\perp/\isocls_i) \mid v^2 = 0, (v,\overline{v})> 0\},\]
    so in particular for a very general choice of period we may assume that $\isocls_1^\perp/\isocls_1$ (resp. $\isocls_2^\perp/\isocls_2$) is Hodge-isometric to $\Hprim^4(-,\bbZ)(1)$ for a very good cubic fourfold. 
\end{construction}
\begin{example}
\cite[Sec.\ 7]{kapustkaconstruction}
    Let $S$ be a polarized K3 surface with $\Pic(S) = \mathbb Z  h$. Pick $B' \in H^2(S, \mathbb Z)$ primitive with $B' \cdot h = 0$ and $(B')^2 = 0$. Set $B \coloneqq \frac{1}{k}B$. Then one has
    $$\Pic(M_{(0,h, 0)}(S, B)) = \begin{pmatrix}0 & k \\ k & 0\end{pmatrix}.$$
    Then $Y = M_{(0, h, 0)}(S, B)$ is an explicit moduli space of sheaves realizing a hyperk\"ahler of $\Kthreen$-type of the desired form.
\end{example}

As a consequence of the particular choice of intersection form, the birational geometry of these examples is highly constrained. 
\begin{proposition}\label{prop:transverse}
    Let $Y$ be a hyperk\"ahler manifold of $\Kthreen$- or $\OG$-type arising from Construction~\ref{construction:hklattice}. If $k > \mbmconst$, where $\mbmconst$ is the bound on the square of MBM classes from Theorem~\ref{thm:mbmbound}, then any birational map $\phi \colon Y \dashrightarrow Y'$ of hyperk\"ahler manifolds is in fact biregular. \par 
    Moreover, $Y$ admits two Lagrangian fibrations with transverse fibers, i.e., if $$\pi_1: Y \to \bbP^n \text{ and } \pi_2 \colon Y \to \mathbb P^n$$ are the fibrations then $\pi_1^{-1}(p) \cap \pi_2^{-1}(q)$ is finite for all $p,q \in \bbP^n.$
\end{proposition}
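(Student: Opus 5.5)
We prove both claims by analysing the lattice $\Pic(Y) = U(k) = \langle \isocls_1,\isocls_2\rangle$ and feeding the result into Theorem~\ref{thm:mbmcone} and Theorem~\ref{thm:mbmbound}.

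\textbf{There are no MBM classes.} Every integral class $\lambda = a\isocls_1 + b\isocls_2 \in \Pic(Y)$ has square
\[\lambda^2 = 2kab,\]
which is divisible by $2k$. So any class of negative square in $\Pic(Y)$ satisfies $\lambda^2 \leq -2k < -\mbmconst$ once $k > \mbmconst$. MBM classes are of Hodge type, negative, and may be taken primitive. Theorem~\ref{thm:mbmbound} then shows that $Y$ has no MBM classes at all; this applies since $b_2 \geq 5$ in both deformation types.

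\textbf{Birational maps are biregular.} By Theorem~\ref{thm:mbmcone}, $\mathrm{Kah}(Y) = \mathrm{Pos}(Y)$, and $\mathrm{BirKah}(Y)$ consists of this single chamber. Let $\phi \colon Y \dashrightarrow Y'$ be birational. Then $\phi^*$ is a parallel-transport Hodge isometry, and $\phi^*\mathrm{Kah}(Y')$ is a chamber of $\mathrm{BirKah}(Y)$, hence equals $\mathrm{Kah}(Y)$. So $\phi^*$ sends some Kähler class to a Kähler class. By the standard result of Fujiki/Huybrechts, a birational map of hyperkähler manifolds whose pullback maps a Kähler class to a Kähler class extends to an isomorphism. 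Hence $\phi$ is biregular.

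\textbf{Two Lagrangian fibrations.} The classes $\isocls_1,\isocls_2$ lie on the boundary of $\mathrm{Pos}(Y) = \mathrm{Kah}(Y)$, so they are nef, isotropic and primitive. The SYZ conjecture is known in both types: in the $\Kthreen$ case by Markman/Bayer--Macrì, and in the $\OG$ case by \cite[Theorem 2.2]{mongardionorati}. It implies that a nef isotropic class is semiample, once one knows it is nef on $Y$ itself and not merely on a birational model; this holds here because $Y$ has no other models. This gives Lagrangian fibrations $\pi_i \colon Y \to \bbP^n$ with $\pi_i^*\cO(1) = \isocls_i$.

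\textbf{Transversality.} Suppose $F = \pi_1^{-1}(p) \cap \pi_2^{-1}(q)$ had a positive-dimensional component, and choose an irreducible curve $C \subset F$. Then $\isocls_1 \cdot C = \isocls_2 \cdot C = 0$. But $\isocls_1 + \isocls_2$ lies in $\mathrm{Pos}(Y) = \mathrm{Kah}(Y)$ and is integral, hence ample, so $(\isocls_1+\isocls_2)\cdot C > 0$, a contradiction. Thus every such intersection is finite.

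The main point requiring care is the SYZ/semiampleness input in the $\OG$ case: we must make sure the cited result yields a genuine morphism on $Y$ itself. The triviality of the birational geometry established above resolves this.
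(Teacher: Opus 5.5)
Your proof is correct and follows essentially the same route as the paper. In both, the divisibility of squares in $U(k)$ rules out MBM classes, so $\mathrm{Kah}(Y)=\mathrm{Pos}(Y)$ and Fujiki's criterion makes every birational map biregular; SYZ then produces $\pi_1,\pi_2$ from the nef isotropic classes $\isocls_1,\isocls_2$. Your curve argument for transversality is the paper's observation that $\pi_1\times\pi_2$ contracts no curves because $\isocls_1+\isocls_2=c_1((\pi_1\times\pi_2)^*\cO(1,1))$ is ample.
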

\begin{proof}
    We first observe that $Y$ necessarily has no MBM classes, since for any integral algebraic vector $z \in \Pic(X),$ by construction we know that $k \mid v^2$, so if $v^2 < 0$ it follows that in fact $v^2 \leq -k < -\mbmconst$. As a consequence we deduce by Theorem \ref{thm:mbmcone} that $\mathrm{Kah}(Y)=\mathrm{Pos}(Y).$ \par
    Suppose $\phi \colon Y \dashrightarrow Y'$ is a birational map of hyperk\"ahler manifolds; in particular, it must be an isomorphism in codimension 1, see \cite[Sec.\ 2.2]{birationalhkcodim2}. To observe that $\phi$ is biregular, it suffices to observe that $\phi^* \colon H^2(Y', \mathbb Z) \iso H^2(Y, \mathbb Z)$ sends a K\"ahler class to a K\"ahler class by \cite[Corollary 3.3]{fujiki}. But since $\phi^* \colon \rH^2(Y',\bbZ) \iso \rH^2(Y,\bbZ)$ is an isometry with respect to the BBF form and thus $\phi^*\mathrm{Pos}(Y') = \mathrm{Pos}(Y)$, it follows in particular that every K\"ahler class on $Y'$ pulls back to a class in $\mathrm{Pos}(Y) = \mathrm{Kah}(Y)$, so in fact $\psi$ is biregular. \par 
    Now since $\mathrm{Kah}(Y) = \mathrm{Pos}(Y)$, it follows that the classes $\isocls_1, \isocls_2 \in \rH^2(Y,\bbZ)$ are on the boundary of the K\"ahler cone, and are in particular nef, so by the SYZ conjecture for $\Kthreen$ or $\OG$ manifolds \cite{matsushita,mongardionorati}, there exist Lagrangian fibrations $\pi_i \colon Y \to \bbP^n$ such that $$\isocls_i = c_1(\pi_i^*\cO_{\bbP^n}(1)).$$
    Consider the induced morphism $\pi_1\times\pi_2 :Y \to \bbP^n \times \bbP^n$; the fibers of this morphism are precisely the intersections of the fibers of $\pi_1$ and $\pi_2$. By construction, the class $$\isocls_1 + \isocls_2 = c_1((\pi_1\times\pi_2)^*\cO_{\bbP^n\times\bbP^n}(1,1))$$ is ample, so it follows that the morphism $\pi_1 \times \pi_2$ cannot contract any curves and must be finite. 
\end{proof}

\begin{remark} \label{rmk:intersectionrmk}
    By using the BBF form, we can in fact calculate how many points the transverse fibers intersect in. Observe that the classes of the fibers along the two Lagrangian fibrations are given by $\isocls_1^n$ and $\isocls_2^n$. By the definition of the BBF form, we have equalities 
    \[ \frac{(2n)!}{n!n!}\int_Y\isocls_1^n\isocls_2^n = \int_Y(\isocls_1+\isocls_2)^{2n} = c_Y\frac{(2n)!}{n!2^n}((\isocls_1+\isocls_2)^2)^n = c_Y\frac{(2n)!}{n!2^n}(2k)^n,\]
    and since $c_Y = 1$ for both deformation types we find that 
    \begin{equation} \label{eqn:intersectioneqn}
        \int_Y\isocls_1^n\isocls_2^n = n!k^n. 
    \end{equation}
    It follows that, up to multiplicity, all fibers of $\pi_1$ intersect with all fibers of $\pi_2$ in $n!k^n$ points. 
\end{remark}
Owing to Theorem \ref{thm:lagfibthm} in the OG10 case or \cite[Theorem 1.5]{markmanlagrangian} in the $\Kthreen$ case\footnote{Note that the non-speciality condition was removed in \cite{soldatenkovverbitsky,abasheva}.}, both Lagrangian fibrations arising in Proposition \ref{prop:transverse} are therefore realized by Tate--Shafarevich twists of compactified abelian fibrations (either of the form $\overline\Pic^0$ or $\overline\IJ$). We summarize the resulting picture with the following diagram:
\begin{equation}
    \begin{tikzcd}
        X \arrow[rd, "\pi_X"'] & & Y \arrow[ld, "\pi_1"] \arrow[rd, "\pi_2"'] & & Z \arrow[ld, "\pi_Z"] \\
        & \bbP^n & & \bbP^n & 
    \end{tikzcd}
\end{equation}
where either $X = \overline\Pic^0(S_X, L_X)$ and $Z = \overline\Pic^0(S_Z, L_Z)$ for polarized K3 surfaces $(S_X, L_X)$ and $ (S_Y, L_Y)$, or $X = \overline\IJ(C_X)$ and $Z = \overline\IJ(C_Z)$ for very good cubic fourfolds $C_X$ and $C_Z$.

In the $\Kthreen$ case, the associated K3 surfaces $S_X$ and $S_Y$ have Picard rank $1$ and the line bundles $L_X, L_Y$ are generators of $\Pic(S_X)$ and $\Pic(S_Y)$ since we have $\rho(X) = 2$ and the divisibility of $\pi_X^*\mathcal O_{\mathbb P^n}(1)$ is one by construction, cf.\ \cite[Theorem 1.5]{markmanlagrangian}.
\subsection{The convolution}
Suppose we are in the situation of Construction \ref{construction:hklattice}, and assume that we have taken $k$ large enough that Proposition \ref{prop:transverse} holds. By Theorem \ref{thm:poincare}, there exist twisted Poincar\'e sheaves \[\cP_{X,Y} \in \Db(X\times Y,\mathrm{pr}_1^*\alpha_X) \quad\text{ and }\quad \cP_{Y,Z} \in \Db(Y \times Z, \mathrm{pr}_2^*\alpha_Z)\] inducing Fourier--Mukai equivalences. Let $\cQ \in \Db(X \times Z, \alpha_X \boxtimes \alpha_Z)$ denote the Fourier--Mukai kernel of the composition
\[\Phi_{\cP_{Y, Z}} \circ \Phi_{\cP_{X, Y}} \colon \Db(X, -\alpha_X) \to \Db(Y) \to \Db(Z, \alpha_Z).\]
\begin{lemma}\label{lem:kernellocfree}
The twisted sheaf $\cQ$ is a twisted vector bundle of rank $n!k^n$. 
\end{lemma}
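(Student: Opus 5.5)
The plan is to check that $\cQ$ is locally free fiberwise over $X$. I will show that for every closed point $x \in X$ the derived restriction $\cQ|_{\{x\}\times Z}$ is a (twisted) vector bundle of rank $n!k^n$ sitting in degree $0$. A standard base-change criterion then makes $\cQ$ a flat sheaf on $X \times Z$ whose restrictions to all fibers $\{x\}\times Z$ are locally free of constant rank. Such a sheaf is locally free of that rank.

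\textbf{Step 1: restrict to a point of $X$.} Since $\cQ$ is the kernel of $\Phi_{\cP_{Y,Z}}\circ\Phi_{\cP_{X,Y}}$, we have
\[\cQ|_{\{x\}\times Z} \simeq \Phi_{\cP_{Y,Z}}(\Phi_{\cP_{X,Y}}(k(x))) \simeq \Phi_{\cP_{Y,Z}}(\cP_{X,Y}|_{\{x\}\times Y}).\]
The twisted Poincaré sheaf $\cP_{X,Y}$ is a Cohen--Macaulay sheaf supported on $X\times_{\bbP^n} Y$, via the transpose version of Theorem~\ref{thm:poincare} (Remark~\ref{rmk:transposermk}). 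So $\Phi_{\cP_{X,Y}}(k(x))$ is a sheaf $G_x$ supported on the fiber $F_x \coloneqq \pi_1^{-1}(\pi_X(x))$.

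Two facts about $G_x$ are needed, and both follow from Arinkin's and Yu's results. First, $G_x$ is a rank one torsion-free sheaf on the integral Lagrangian fiber $F_x$. Second, $i_*G_x$ is Cohen--Macaulay on $Y$, because $k(x)$ is sent to a (twisted) point-like object on the dual fiber, and the fibers are Cohen--Macaulay of codimension $n$.

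\textbf{Step 2: apply Bottini's proposition.} By Proposition~\ref{prop:transverse}, the composition $F_x \hookrightarrow Y \xrightarrow{\pi_2} \bbP^n$ is finite, since fibers of $\pi_1$ and $\pi_2$ meet in finitely many points. It is also surjective, since $F_x$ is $n$-dimensional. By Remark~\ref{rmk:intersectionrmk} its degree is $\int_Y \isocls_1^n\isocls_2^n = n!k^n$.

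Now apply Proposition~\ref{prop:imageoffinlag} to the Tate--Shafarevich twist $\pi_2\colon Y\to\bbP^n$ of $Z$, with subvariety $F_x$ and rank one sheaf $G_x$. It gives that $\Phi_{\cP_{Y,Z}}(i_*G_x)$ is a twisted locally free sheaf of rank $n!k^n$ on $Z$, concentrated in degree $0$.

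\textbf{Step 3: conclude.} Since every derived fiber $\cQ|_{\{x\}\times Z}$ is a sheaf in degree $0$, $\cQ$ is a sheaf flat over $X$; this is the usual lemma from Bridgeland's criterion. A flat sheaf with locally free fibers of constant rank is locally free, so $\cQ$ is a twisted vector bundle of rank $n!k^n$.

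\textbf{Main obstacle.} I expect the hardest point to be Step 1: verifying for every $x$, including points over singular fibers, that $\Phi_{\cP_{X,Y}}(k(x))$ is a rank one sheaf $G_x$ on $F_x$ with $i_*G_x$ Cohen--Macaulay. I would try to get this directly from the Cohen--Macaulayness and flatness of $\cP_{X,Y}$ over $X$ in Theorem~\ref{thm:poincare} (Arinkin's Poincaré sheaf is flat over each factor). The restriction $\cP_{X,Y}|_{\{x\}\times Y}$ is then a maximal Cohen--Macaulay sheaf on the integral fiber $F_x$, of rank one generically.
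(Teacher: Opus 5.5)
Your proposal is correct and follows the paper's own argument. The paper likewise reduces to the fiber $\Phi_{\cP_{X,Y}}(\cO_x)$ and gets finiteness of degree $n!k^n$ from Proposition~\ref{prop:transverse} and Remark~\ref{rmk:intersectionrmk}, then applies Proposition~\ref{prop:imageoffinlag}, with Cohen--Macaulayness deduced from flatness of the Poincar\'e sheaf via \cite[Lemma 2.10]{yulsv} exactly as in your ``main obstacle'' paragraph; your Step~3 (Bridgeland's criterion plus the fiberwise criterion for local freeness) simply makes explicit the passage from fibers to $\cQ$ that the paper leaves implicit.
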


\begin{proof}[Proof of Lemma~\ref{lem:kernellocfree}]
Fix a point $x \in X$. Note that $\Phi_{\cP_{X, Y}}(\cO_x)$ is a sheaf of rank one on the fiber over the image of $x$ in $\mathbb P^n$. By construction, the restriction of $\pi_2$ to this fiber is finite of degree $n!k^n$ by Proposition \ref{prop:transverse} and Remark \ref{rmk:intersectionrmk}. Thus, the claim follows from Proposition \ref{prop:imageoffinlag} if we can show that $\Phi_{\mathcal P_{X, Y}}(\cO_x)$ is Cohen--Macaulay on $Y$. As the Poincar\'e sheaf is flat over both factors, the result follows from \cite[Lemma 2.10]{yulsv}.
\end{proof}

\begin{lemma} \label{lem:isotropicpreserved}
    Let $\psi \coloneq \widetilde\Psi_\cQ|_{\rH^2(X,\bbQ)} \colon \rH^2(X,\bbQ) \to \rH^2(Z,\bbQ)$ as defined in Theorem \ref{thm:hyperholdef}. Then 
    \[\psi(\isocls_X) = \cnstsign \isocls_Z, \qquad\qquad \psi({\theta_X}) = \cnstsign\theta_Z.\]
    for some fixed constant $\cnstsign = \pm 1$. Here $\isocls_X = c_1(\pi_X^*\cO_{\bbP^n}(1)), \isocls_Z = c_1(\pi_Z^*\cO_{\bbP^n}(1))$, and $\theta_X,\theta_Z$ are isotropic generalized theta divisors for $\pi_X$ and $\pi_Z$, as in Section \ref{sec:cohomFM}.
\end{lemma}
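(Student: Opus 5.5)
The approach is to factor $\widetilde\Psi_\cQ$ as a composition of the extended Mukai isometries attached to the two twisted Poincaré kernels. Then we track $\isocls$ and $\theta$ through each factor using Proposition~\ref{prop:rank4sublattice}, Proposition~\ref{prop:extmukaipoincare} and Example~\ref{ex:explambdallv}.

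By functoriality in Theorem~\ref{thm:taelman}, the cohomological action of $\cQ$ is the composite $\Phi^{\rH}_{\cP_{Y,Z}}\circ\Phi^{\rH}_{\cP_{X,Y}}$. By Proposition~\ref{prop:extmukaipoincare} and Remark~\ref{rmk:transposermk}, each factor is, up to B-field twists, parallel transport along a degenerate twistor line composed with the untwisted Poincaré isometry $\Phi^{\rHt}_\cP$. The degenerate twistor transport $\rho$ in the Tate--Shafarevich family of $\pi_1$ sends $\isocls_1$ to $\isocls_X$ and preserves $\langle e,f\rangle$. It also sends $\isocls_1^\perp/\isocls_1$ isometrically to $\isocls_X^\perp/\isocls_X$.

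The computation then proceeds in three steps.

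First, for the leg $X\to Y$ we use the transpose of Proposition~\ref{prop:rank4sublattice}. The class $f$ goes to $\pm$ the fiber class, and $\isocls_X$ goes to $\mp f$. So $e$ and $f$ on $X$ are carried into the span of $\isocls_1$, $f$, $e$ and $\theta$-type classes on $Y$, with explicit half-integer corrections.

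Second, for the leg $Y\to Z$ we apply Proposition~\ref{prop:rank4sublattice} with respect to $\pi_2$. Its distinguished rank 4 sublattice is $\langle e,f,\isocls_2,\theta_2\rangle$. The key point is the relation between the two rank 4 sublattices of $\rHt(Y,\bbQ)$: since $\Pic(Y)=\langle\isocls_1,\isocls_2\rangle$ with $(\isocls_1,\isocls_2)=k$, the class $\isocls_2/k$ is a generalized theta divisor for $\pi_1$, and symmetrically. Take $\theta_1=\isocls_2/k$ and $\theta_2=\isocls_1/k$, up to the $\bbQ$-correction allowed by the definition. Then the whole computation lives in the rank 4 lattice $\langle e,f,\isocls_1,\isocls_2\rangle$.

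Third, we compose the explicit $4\times 4$ matrices. This includes the B-field factors $\exp(-B'_\alpha)$ and $\exp(-B'_\beta)$ coming from $\kappa(\cQ)$ in Lemma~\ref{lem:posrankdegreversing}. We then check that the result sends $\isocls_X\mapsto\cnstsign\isocls_Z$ and $\theta_X\mapsto\cnstsign\theta_Z$.

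A useful shortcut is available. Lemma~\ref{lem:posrankdegreversing} says $\widetilde\Psi_\cQ$ is degree reversing and Hodge. So it maps $\rH^2(X,\bbQ)$ to $\rH^2(Z,\bbQ)$ and $\Pic(X)_\bbQ=\langle\isocls_X,\theta_X\rangle$ Hodge-isometrically onto $\Pic(Z)_\bbQ=\langle\isocls_Z,\theta_Z\rangle$. Both are hyperbolic planes, and their only isotropic lines are $\bbQ\isocls$ and $\bbQ\theta$. Hence $\psi$ either preserves or swaps these lines, with scalars $c$ and $c^{-1}$ fixed by $(\isocls,\theta)=1$.

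So it suffices to establish two facts: $\psi(\isocls_X)\in\bbQ\isocls_Z$, and the scalar is $\pm1$. For the first, I would track a point class. The sheaf $\cO_x$ maps to a rank one sheaf on a $\pi_1$-fiber, which then maps to a vector bundle on $Z$. Alternatively, $\Phi$ applied to a sheaf pulled back from the base of $\pi_X$ is supported on a fiber of $\pi_Z$. Via Markman's description of how $\widetilde\Psi$ acts on $e$ and $f$, this pins down the isotropic line. For the second, I would use integrality and primitivity: $\isocls$ is primitive of divisibility one, and $\theta$ is integral in the $\OG$ case.

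The main obstacle is the bookkeeping of the twists. The normalization by $\exp(-c_1^B(\cQ)/\rank\cQ)$ shifts $e$ and $\isocls$-components, and I must confirm it does not spoil the isotropic lines. It likewise remains to show that the sign produced is the same $\cnstsign$ for both classes. For the latter, I would first try to show that the normalized composite fixes $\isocls_X$ and $\theta_X$ up to the product of the two signs $\cnstsign$ from Proposition~\ref{prop:rank4sublattice}, using that the two Poincaré factors contribute symmetric matrices.
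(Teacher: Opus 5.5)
Your hyperbolic-plane reduction is correct, and it is exactly how the paper concludes. Since $\widetilde\Psi_\cQ$ is a degree-reversing Hodge isometry (Lemma~\ref{lem:posrankdegreversing}) and $X$, $Z$ have Picard rank $2$, $\psi$ maps $\langle\isocls_X,\theta_X\rangle_\bbQ$ onto $\langle\isocls_Z,\theta_Z\rangle_\bbQ$. Then $\psi(\isocls_X)=c\,\isocls_Z$ forces $\psi(\theta_X)=c^{-1}\theta_Z$, so your worry about the two signs agreeing is moot once $c=\pm1$.

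The gap is in the two facts you reduce to. Integrality and primitivity cannot fix the scalar. $\psi$ is only a \emph{rational} isometry: on the Lagrangian primitive part it is $k$-cyclic, with genuine denominators (Corollary~\ref{cor:kcyclic}). Moreover $\isocls\mapsto c\,\isocls$, $\theta\mapsto c^{-1}\theta$ is a rational isometry of the hyperbolic plane for every $c\in\bbQ^\times$, and in the $\Kthreen$ case $\theta$ need not even be integral. Nor is the swap $\psi(\isocls_X)\in\bbQ\theta_Z$ actually excluded. Tracking $\cO_x$ only recovers $\widetilde\Psi_\cQ(f)\in\bbQ e$, i.e.\ degree reversal, and the fibre-support variant is never connected to $\widetilde\Psi_\cQ(\isocls_X)$.

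Your first route has its own problem. $\rho_{X,Y}(\theta_X)$ is not a Hodge class on $Y$; with the markings of Section~\ref{sec:applications} it is $\mu_1\notin\Pic(Y)_\bbQ$. So you cannot feed $\theta_1=\isocls_2/k$ into Proposition~\ref{prop:rank4sublattice}. The rank-four picture $\langle e,f,\isocls_1,\isocls_2\rangle$ only emerges after the $\exp(B_X')$ correction, i.e.\ after knowing the B-fields explicitly as in Lemma~\ref{lem:explicitBfields}.

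The missing observation is that the B-fields never have to be known; the two entries you already wrote down in your first step suffice. Factor $\widetilde\Psi_\cQ=\exp(B_Z')\circ\Phi^{\rHt}_{\cP_Z}\circ\rho_{Y,Z}\circ\rho_{X,Y}\circ\Phi^{\rHt}_{\cP_X}\circ\exp(B_X')$ via Proposition~\ref{prop:extmukaipoincare}. Use $\isocls\mapsto-\cnstsign f$ and $f\mapsto\cnstsign\isocls$ from Proposition~\ref{prop:rank4sublattice}, together with the facts that the degenerate-twistor transports fix $f$ and send $\isocls_X\mapsto\isocls_1$, $\isocls_2\mapsto\isocls_Z$.
\begin{enumerate}
\item $\isocls_X$ lands in $\rHt(Y,\bbQ)$ as $-\cnstsign_X f+\cnstsign_X(B_X',\isocls_X)\isocls_1$, whereas $f\in\rHt(X,\bbQ)$ lands as $\cnstsign_X\isocls_1$.
\item Since the total map sends $f$ into $\bbQ e$, the second half sends $\isocls_1$ into $\bbQ e$.
\item The second half sends $f$ to $\cnstsign_Z\isocls_Z+\cnstsign_Z(\isocls_Z,B_Z')f$.
\end{enumerate}
Hence $\widetilde\Psi_\cQ(\isocls_X)=-\cnstsign_X\cnstsign_Z\isocls_Z+(\text{terms in } e \text{ and } f)$, and these extra terms vanish because $\widetilde\Psi_\cQ$ preserves $\rH^2$. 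This gives the isotropic line and the scalar $\pm1$ in one stroke. Your hyperbolic-plane argument then yields $\psi(\theta_X)=-\cnstsign_X\cnstsign_Z\theta_Z$.

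Your point-tracking idea is the object-level shadow of this computation. To extract $c=\pm1$ from it directly, you would have to show that $\Phi_{\cP_{X,Y}}(\cO_{X_b})$ is a single shifted point sheaf. You would then read off the $\rH^{2n}$-component of the Mukai vector of its image. That is precisely what Proposition~\ref{prop:rank4sublattice} packages.
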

\begin{proof}
    Combining the definition of $\widetilde\Psi_\cQ$ with Proposition \ref{prop:extmukaipoincare} (along with Remark \ref{rmk:transposermk}), we observe that this isometry can be factored as 
    \[
    \begin{tikzcd}
        \rHt(X,\bbQ) \arrow[d, "\cdot \exp(B_X')"] \arrow[rrrr, "\widetilde\Psi_\cQ"]& & & & \rHt(Z,\bbQ) \\
        \rHt(X,-B_X,\bbQ) \arrow[d, equals] & & & & \rHt(Z,B_Z,\bbQ) \arrow[u, "\cdot \exp(B_Z')"] \\
        \rHt(X,\bbQ) \arrow[r, "\Phi^{\rHt}_{\cP_X}"] & \rHt(X,\bbQ) \arrow[r, "\rho_{X,Y}"] & \rHt(Y,\bbQ) \arrow[r, "\rho_{Y,Z}"] & \rHt(Z,\bbQ) \arrow[r, "\Phi^{\rHt}_{\cP_Z}"] & \rHt(Z,\bbQ) \arrow[u, equals]
    \end{tikzcd}
    \]
    where:
    \begin{itemize}
        \item $B_X$ and $B_Z$ are the B-fields for $\alpha_X$ and $\alpha_Z$ guaranteed by Proposition \ref{prop:extmukaipoincare} respectively,
        \item $\mathrm{pr}_1^*B_X' + \mathrm{pr}_2^*B_Z' = -\frac{c_1^B(\cQ)}{\rank(\cQ)},$ making $B_X'$ and $B_Z'$ respectively B-fields for $\alpha_X$ and $\alpha_Z$ which differ from $B_X$ and $B_Z$ by classes in $\rH^{1,1}_\bbQ$, 
        \item $\rho_{X,Y}$ and $\rho_{Y,Z}$ are respectively the parallel transports from $X$ to $Y$ and from $Y$ to $Z$ along degenerate twistor lines, and 
        \item $\cP_X \in \Db(X \times X)$ and $\cP_Z \in \Db(Z \times Z)$ are the untwisted Poincar\'e sheaves. 
    \end{itemize}
    Note that while the five maps $\widetilde{\Psi}_\cQ$, $\exp(B_X')\cdot (-)$, $\exp(B_Z')\cdot(-),\, \Phi_{\cP_X}^{\rHt}$ and $\Phi_{\cP_Z}^{\rHt}$ are Hodge isometries, the equalities and parallel transports are not. \par 
    Using the explicit descriptions from Example \ref{ex:explambdallv} and Proposition \ref{prop:rank4sublattice}, along with the fact that the parallel transport sends $f \mapsto f$ and preserves the associated isotropic classes, i.e.,
    \[\rho_{X,Y}(\isocls_X) = \isocls_1 \text{ and }\rho_{Y,Z}(\isocls_2) = \isocls_Z,\] we see that the image of $\isocls_X$ in $\rHt(Y,\bbQ)$ is 
    \begin{align*}
        \rho_{X,Y}\circ \Phi_{\cP_X}^{\rHt}(\exp(B_X')\cdot\isocls_X) &= \rho_{X,Y} \circ \Phi_{\cP_X}^{\rHt}(\isocls_X+(B_X',\isocls_X)f) \\
        &= \rho_{X,Y}(-\cnstsign_Xf+(B'_X,\isocls_X)\cnstsign_X\isocls_X) \\
        &= -\cnstsign_Xf + \cnstsign_X(B'_X,\isocls_X)\isocls_1
    \end{align*}
    for some $\cnstsign_X = \pm 1$. On the other hand the image of $f$ in $\rHt(Y,\bbQ)$ is 
    \[\rho_{X,Y} \circ \Phi_{\cP_X}^{\rHt}(\exp(B_X')\cdot f) = \cnstsign_X\isocls_1.\]
    However, the composition $\widetilde\Psi_\cQ \colon \rHt(X,\bbQ) \to \rHt(Z,\bbQ)$ is degree-reversing by Lemma \ref{lem:posrankdegreversing}, so in particular the image of $f$ in $\rHt(Z,\bbQ)$ is $K\cdot e$ for some rational constant $K \neq 0$. It follows that the image of $\cnstsign_X\isocls_1$ under the composition $\rHt(Y,\bbQ) \to \rHt(Z,\bbQ)$ is $K\cdot e.$ A similar calculation to before also shows
    \[\exp(B'_Z)\cdot(\Phi_{\cP_Z}^{\rHt} \circ \rho_{Y,Z}(f)) = \cnstsign_Z\isocls_Z + \cnstsign_Z(\isocls_Z,B_Z')f\]
    for some $\cnstsign_Z = \pm 1$, so we see 
    \[\widetilde\Psi_\cQ(\isocls_X) = (B'_X,\isocls_X)K\cdot e -\cnstsign_X\cnstsign_Z\isocls_Z - \cnstsign_X\cnstsign_Z(\isocls_Z,B_Z')f.\]
    Since $\widetilde\Psi_\cQ$ preserves degree 2 cohomology, it follows that $(B'_X,\isocls_X) = (\isocls_Z,B_Z') = 0$ and 
    \[\widetilde\Psi_\cQ(\isocls_X) = -\cnstsign_X\cnstsign_Z\isocls_Z.\]
    Since we have rational Hodge isometries $\rHt(X,\bbQ) \simeq \rHt(Y,\bbQ) \simeq \rHt(Z,\bbQ)$, the Picard ranks of $X,Y$ and $Z$ must coincide, and by the choices made in Construction \ref{construction:hklattice}, it follows that they are all of Picard rank 2. In particular, since $\widetilde\Psi_\cQ$ is isometric and preserves $\rH^2(-,\bbQ)$, we have
    \[\widetilde\Psi_\cQ(\theta_Z) = -\cnstsign_X\cnstsign_Z\theta_Z,\]
    which allows us to conclude.
\end{proof}

The strategy to show that $\cQ$ is hyperholomorphic relies on knowing the stability of the fibers of $\cQ$ along the two projections with respect to appropriate polarizations, using Lemma \ref{lem:stabilitylemma}. The following easy lemma allows us to extend that stability to a nearby classes in $\mathrm{Kah}(Y)$ which are not necessarily algebraic.
\begin{lemma}\label{lem:amptokahler}
    For any hyperk\"ahler manifold $Y$, any twisted vector bundle $\cE$ on $Y$ and any polarization $H \in \mathrm{Amp}(Y)_\bbR$ such that $\cE$ is $\mu_H$-stable, $\cE$ is also $\mu_\omega$-stable for any K\"ahler class of the form $\omega = H + \tau$ where $\tau \in \Htr^2(Y,\bbR)$ is a transcendental cohomology class, i.e., $\tau$ is BBF-orthogonal to $\rH^{1,1}(Y,\bbZ).$
\end{lemma}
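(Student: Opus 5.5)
The plan is to show that $\mu_H$-stability and $\mu_\omega$-stability coincide, because the two slope functions agree on every subsheaf that could possibly destabilize. Write $r = \rank(\cE)$ and $2n = \dim Y$. For any saturated subsheaf $\cF \subset \cE$ with $0 < \rank(\cF) < r$, the twisted first Chern class $c_1^B(\cF)$ differs from a Hodge class only by a multiple of $B$. More precisely, the normalized class
\[\lambda_\cF \coloneq \frac{c_1^B(\cF)}{\rank(\cF)} - \frac{c_1^B(\cE)}{r}\]
is independent of the choice of B-field $B$ and lies in $\rH^{1,1}(Y,\bbQ)$, since it is a difference of twisted Chern classes of subsheaves with the B-field dependence cancelling. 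The stability condition compares $\mu_\omega(\cF)$ with $\mu_\omega(\cE)$, which amounts to the sign of $\int_Y \lambda_\cF\,\omega^{2n-1}$.

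The key step is to compute $\int_Y \lambda\,\omega^{2n-1}$ for $\lambda \in \rH^{1,1}(Y,\bbQ)$ and $\omega = H+\tau$. I would use the polarized Fujiki relation: for classes in $\rH^2$,
\[\int_Y \lambda\,\omega^{2n-1} = c\,(\lambda,\omega)(\omega,\omega)^{n-1}\]
for a positive constant $c$ depending only on $Y$. Since $\tau$ is BBF-orthogonal to $\rH^{1,1}(Y,\bbZ)$, and hence to $\rH^{1,1}(Y,\bbQ)$ and to $H$, we get $(\lambda,\omega) = (\lambda,H)$ and $(\omega,\omega) = (H,H) + (\tau,\tau)$. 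Here $(\omega,\omega) > 0$ because $\omega$ is Kähler. Therefore
\[\int_Y \lambda\,\omega^{2n-1} = \left(\frac{(\omega,\omega)}{(H,H)}\right)^{n-1} \int_Y \lambda\,H^{2n-1},\]
so the two slope differences are positive multiples of one another, with a factor independent of $\cF$.

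It follows that $\mu_\omega(\cF) < \mu_\omega(\cE)$ if and only if $\mu_H(\cF) < \mu_H(\cE)$, for every subsheaf $\cF$. Note that the saturated subsheaves are the same for both notions, since stability is tested on coherent subsheaves of $\cE$ and these do not depend on the Kähler class. Hence $\mu_H$-stability implies $\mu_\omega$-stability.

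The only delicate point I expect is checking that $\lambda_\cF$ really is of type $(1,1)$ in the twisted setting, rather than merely a class in $\rH^2(Y,\bbQ)$. I would handle it as follows. The twisted $c_1^B(\cF)$ equals $c_1(\cF\otimes \cL^\vee)$ for a topological twisted line bundle $\cL$, which gives $c_1^B(\cF) - \rank(\cF)B \in \rH^{1,1}$. Alternatively, I would pass to $\det$ of the untwisted sheaf $\cF^{\otimes r}\otimes(\det\cE)^{-\rank \cF}$ restricted to its locally free locus, whose complement has codimension $\ge 2$. Either route makes $r\,c_1^B(\cF) - \rank(\cF)\,c_1^B(\cE)$ the first Chern class of an honest line bundle, hence Hodge.
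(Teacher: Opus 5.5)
Your proposal is correct and is essentially the paper's proof. The paper invokes O'Grady's criterion that stability with respect to a K\"ahler class $\omega$ on a hyperk\"ahler manifold amounts to $(\lambda_{\cF,\cE},\omega)<0$ for all subsheaves, where $\lambda_{\cF,\cE}=c_1(\cE^\vee\otimes\cF)$ is a rational Hodge class because $\cE^\vee\otimes\cF$ is untwisted; it then concludes from $(\lambda_{\cF,\cE},\omega)=(\lambda_{\cF,\cE},H)$, and your polarized Fujiki computation simply derives that criterion directly. The only slip is in your alternative argument for Hodge type: the untwisted line bundle you want is $(\det\cF)^{\otimes r}\otimes(\det\cE)^{-\rank\cF}$, not the determinant of $\cF^{\otimes r}\otimes(\det\cE)^{-\rank\cF}$, though your first route, or using $\cE^\vee\otimes\cF$ as the paper does, already settles this.
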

\begin{proof}
    By \cite[Lemma 3.7]{ogradymodular} (which as observed in \cite[Proposition 6.5]{bottinitenfolds} carries over to the twisted case, and which easily generalizes from polarizations to K\"ahler classes), the twisted vector bundle $\cE$ is $\mu_H$-stable (resp. $\mu_\omega$-stable) if and only if $(\lambda_{\cF,\cE},H) < 0$ (resp. $(\lambda_{\cF,\cE},\omega) < 0$) where 
    \[\lambda_{\cF,\cE} \coloneq c_1(\cE^\vee \otimes \cF)\]
    for any non-zero twisted subsheaf $\cF \subset \cE$. Since $\cE^\vee \otimes \cF$ is an ordinary untwisted sheaf, $c_1(\cE^\vee \otimes \cF)$ is a rational Hodge class. Since the difference $\mu - H$ is transcendental, it follows that $$(\lambda_{\cF,\cE},H) = (\lambda_{\cF,\cE},\mu)$$ so the result follows. 
\end{proof}

\begin{theorem} \label{thm:hyperholexistence}
    There exists some $\cnstsign = \pm 1$ such that the twisted vector bundle $\cQ$ satisfies the conditions of Theorem \ref{thm:hyperholdef}; i.e., it is a hyperholomorphic twisted vector bundle on $X \times Z$. 
\end{theorem}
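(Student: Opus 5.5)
We have to verify the hypotheses of Theorem~\ref{thm:hyperholdef} for $\cE = \cQ$ on $X \times Z$. Assumption~\ref{ass:llvequiv} holds because the cohomological transform $\Phi^{\rH}_\cQ$ is the composite of the two twisted Poincar\'e transforms. Each of these is LLV-equivariant by Corollary~\ref{cor:poincarellv} and Remark~\ref{rmk:transposermk}, and composites of LLV-equivariant isometries are LLV-equivariant. Lemma~\ref{lem:kernellocfree} already shows that $\cQ$ is locally free of positive rank. The remaining work is to produce a constant $C=\cnstsign$ and an open subcone $\cC_X \subset \mathrm{Kah}(X)$ with two properties: for $\omega_X \in \cC_X$ the class $\omega_Z = \cnstsign\psi(\omega_X)$ is K\"ahler on $Z$, and $\cQ$ is slope-stable with respect to $\mathrm{pr}_1^*\omega_X + \mathrm{pr}_2^*\omega_Z$.

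\textbf{Step 1: choice of the cone.} By Lemma~\ref{lem:isotropicpreserved}, $\psi(\isocls_X) = \cnstsign\isocls_Z$ and $\psi(\theta_X) = \cnstsign\theta_Z$, and we take $C = \cnstsign$. Then $C\psi$ sends a class $a\isocls_X + b\theta_X + \tau$, with $\tau \in \Htr^2(X,\bbR)$, to $a\isocls_Z + b\theta_Z + C\psi(\tau)$. Here $C\psi(\tau)$ is transcendental, since $\psi$ is a Hodge isometry and so preserves the orthogonal complement of the Picard lattice; all three manifolds have Picard rank $2$. The ample classes of the form $H_X = \theta_X + t\isocls_X$ with $t \gg 0$ are $a$-suitable for any fixed bound $a$, and $C\psi$ maps them to $H_Z = \theta_Z + t\isocls_Z$, which are $a$-suitable on $Z$ as well. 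We take $\cC_X$ to be the open cone of classes $s(H_X + \tau)$ with $s>0$, $t$ large and $\tau$ small transcendental. Such classes are K\"ahler, being small perturbations of ample classes. Their images $s(H_Z + C\psi(\tau))$ are K\"ahler on $Z$ for the same reason, after shrinking $\tau$. This suffices, but in fact no shrinking is needed: Picard rank $2$ with $U(k)$-type lattices and no MBM classes for $k > \mbmconst$ should give $\mathrm{Kah} = \mathrm{Pos}$ here.

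\textbf{Step 2: stability of the fibres.} Fix $x \in X$. Then $\cQ|_{\{x\}\times Z} = \Phi_{\cP_{Y,Z}}(\Phi_{\cP_{X,Y}}(\cO_x))$, where $\Phi_{\cP_{X,Y}}(\cO_x)$ is a rank one Cohen--Macaulay sheaf on a Lagrangian fibre $F$ of $\pi_1$, and $F$ is finite of degree $n!k^n$ over the base of $\pi_2$. We apply Lemma~\ref{lem:stabilitylemma} to $\pi_2 \colon Y \to \bbP^n$, viewing $Y$ as a Tate--Shafarevich twist of $Z$. This shows that $\cQ|_{\{x\}\times Z}$ is $\mu_{H_Z}$-stable for $a$-suitable $H_Z$. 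The bound $a(\cE)$ of \cite[Definition 6.4]{bottinitenfolds} depends only on numerical invariants, which are constant in $x$, so a single $t$ works for all $x$. By the symmetric argument using Remark~\ref{rmk:transposermk}, the restrictions $\cQ|_{X\times\{z\}}$ are $\mu_{H_X}$-stable. Lemma~\ref{lem:amptokahler} then upgrades both statements to stability with respect to $H_X + \tau$ and $H_Z + C\psi(\tau)$.

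\textbf{Step 3: from fibrewise stability to stability on the product.} This is the step I expect to be the main obstacle. Suppose $\cF \subset \cQ$ destabilizes $\cQ$ with respect to $\omega = \mathrm{pr}_1^*\omega_X + \mathrm{pr}_2^*\omega_Z$. The first thing to try is the standard argument in \cite[Section 5]{markmanalgebraic}. Expand $\mu_\omega(\cF)$ in K\"unneth components of $\omega^{4n-1}$. Using that the restrictions of $\cQ$ to the generic fibres of both projections are stable, with the slope computed by restriction, one shows that $\cF$ restricted to a general fibre $\{x\}\times Z$ is either zero or of full rank, and similarly for $X\times\{z\}$. Slope comparison then forces $\cF$ to have full rank, and the quotient $\cQ/\cF$ is torsion, so $\cF$ does not destabilize. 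A cleaner alternative is to use that each fibre restriction is stable and hence simple. Then $\mathrm{pr}_{1,*}\cE nd(\cQ)$ and $\mathrm{pr}_{2,*}\cE nd(\cQ)$ are trivial line bundles, and any saturated destabilizing subsheaf would induce a subsheaf of the restriction to a general fibre of intermediate slope, contradicting fibrewise stability. Either way $\cQ$ is $\mu_\omega$-stable for all $\omega_X \in \cC_X$, and Theorem~\ref{thm:hyperholdef} applies.
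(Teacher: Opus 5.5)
Your proposal is correct and is essentially the paper's argument: fibrewise stability from Lemma~\ref{lem:stabilitylemma} with an $x$-independent bound $a$; transport of $a$-suitable polarizations via Lemma~\ref{lem:isotropicpreserved}, where your explicit family $\theta_X+t\isocls_X\mapsto\theta_Z+t\isocls_Z$ replaces the paper's neighbourhood $U$ shifted by $d\isocls_X/2$ via Friedman's bound; Lemma~\ref{lem:amptokahler} to pass to K\"ahler classes; and the K\"unneth slope comparison that the paper compresses into ``this is enough to deduce''. The only slip is the unused aside claiming $\mathrm{Kah}=\mathrm{Pos}$ on $X$ and $Z$: the MBM-freeness in Proposition~\ref{prop:transverse} relies on $\Pic(Y)=U(k)$, whereas $\Pic(X)$ and $\Pic(Z)$ are rationally spanned by $\isocls$ and $\theta$ and contain classes of square $-2$ (e.g.\ $\isocls_X-\theta_X$ in the $\OG$ case), but this is harmless since your shrinking of $\tau$ already does the job.
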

\begin{proof}
    We first observe that for any $x \in X$, the twisted vector bundle $$\cQ|_{\{x\}\times Z} \simeq \Phi_{\cQ}(\cO_x) \simeq \Phi_{\cP_{Y,Z}}(\Phi_{\cP_{X,Y}}(\cO_x))$$ is $\mu_{H_Z}$-stable with respect to any $a(\cQ|_{\{x\}\times Z})$-suitable polarization $H_Z$ on $Z$. It follows from the definition of $a(\cE)$ for twisted modular vector bundles $\cE$ \cite[Definition 6.4]{beckmann} (which itself relies on \cite[Definitions 1.1 and 3.3]{ogradymodular}) that this quantity is deformation-invariant, hence independent of the choice of point $x \in X$. Hence write $a_Z \coloneq a(\cQ|_{\{x\}\times Z})$ for some (equivalently, any) $x \in X$. A symmetric argument shows that $\cQ|_{X\times\{z\}}$ is $\mu_{H_X}$-stable $H_X$ an $a_X$-suitable polarization, where $a_X \coloneq a(\cQ|_{X\times\{z\}})$ for $z \in Z$. \par 
    This is enough to deduce that $\cQ$ is $\mu_H$-stable where $H = \mathrm{pr}_1^*H_X + \mathrm{pr}_2^*H_Z$ for $H_X$ and $H_Z$ respectively $a_X$- and $a_Z$-suitable polarizations. It is enough to show that for some constant $\cnstsign = \pm 1$, there is an open subcone $\cC_X \subset \mathrm{Amp}(X)_\bbR$ consisting of $a_X$-suitable classes which are sent under the rational Hodge isometry $\cnstsign\cdot \psi = \cnstsign\cdot \widetilde\Psi_\cQ|_{\rH^2(X,\bbQ)} \colon \rH^2(X,\bbQ) \to \rH^2(Z,\bbQ)$ to $a_Z$-suitable classes. Indeed, applying Lemma \ref{lem:amptokahler} will allow us to enlarge this to an open subcone $\cC_X \subset \mathrm{Kah}(X)_\bbR$ such that the hypotheses of Theorem \ref{thm:hyperholdef} are satisfied. \par
    By Lemma \ref{lem:isotropicpreserved}, there is a constant $\cnstsign = \pm 1$ so that $\cnstsign\psi(\isocls_X) = \isocls_Z$ and $\cnstsign\psi(\theta_X) = \theta_Z$. Since $\isocls_X$ and $\isocls_Z$ are nef and are therefore on the boundary of the ample cone, by moving from $\isocls_X$ in the direction of the interior of the ample cone we may find a polarization $H$ on $X$ such that $\cnstsign\psi(H)$ is ample on $Z$. By the openness of ampleness we may enlarge this to a neighborhood $H \in U \subset \mathrm{Amp}(X)$ such that $\cnstsign\psi(U) \subset \mathrm{Amp}(Z).$ \par 
    Let $d = \max\{0,a_X\cdot\sup_{H_X \in U}(H_X,\isocls_X),a_Z\cdot\sup_{H_Z \in \cnstsign\psi(U)}(H_Z,\isocls_Z)\}.$ Then by a bound of Friedman \cite[Corollary 5.14]{ogradymodular2}, for any $H_X \in U$ the polarization $H_X + d\isocls_X/2$ is $a_X$-suitable and $C\psi(H_X) + d\isocls_Z/2$ is $a_Z$-suitable. Taking the cone spanned by polarizations in $U + d\isocls_X/2$ yields the necessary subcone $\cC_X \subset \mathrm{Amp}(X)_\bbR$.
\end{proof}
We then have an easy corollary of the deformation of hyperholomorphic sheaves over twistor lines.
\begin{corollary} \label{cor:hyperholequivfromdef}
    Let $X'$ be any hyperk\"ahler manifold of $\Kthreen$- or $\OG$-type. Then there exists another hyperk\"ahler $Z'$ of the same deformation type and a twisted vector bundle $\cQ'$ on $X' \times Z'$ realizing a twisted derived equivalence 
    \[\Phi_{\cQ'} \colon \Db(X',-\alpha_{X'}) \iso \Db(Z',\alpha_{Z'}).\]
\end{corollary}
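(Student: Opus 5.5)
The plan is to deform the example produced by Theorem~\ref{thm:hyperholexistence} along generic twistor paths in the moduli space of Hodge isometries. First, fix the data of Construction~\ref{construction:hklattice} with $k > \mbmconst$ in the deformation type of $X'$. This gives the triple $X \leftarrow Y \rightarrow Z$ and the twisted vector bundle $\cQ$ on $X \times Z$ from Lemma~\ref{lem:kernellocfree}. By Theorem~\ref{thm:hyperholexistence}, there is a sign $\cnstsign$ such that $\cQ$ satisfies the hypotheses of Theorem~\ref{thm:hyperholdef}, with $C = \cnstsign$ and $\psi = \widetilde\Psi_\cQ|_{\rH^2(X,\bbQ)}$.

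Next, choose markings $\marking_X$ and $\marking_Z$, and set $\phi = \cnstsign\,\marking_Z\circ\psi\circ\marking_X^{-1}$. Let $\fM_\phi^0$ be the connected component of $\fM_\phi$ containing $(X,\marking_X,Z,\marking_Z)$. I need a point of $\fM_\phi^0$ whose first factor is $X'$. Since $X'$ has the same deformation type as $X$, there is a marking $\marking_{X'}$ with $(X',\marking_{X'})$ in the same connected component of $\fM_\Lambda$ as $(X,\marking_X)$. Composing with monodromy if necessary, I expect this to be possible by Markman's analysis in \cite[Section 5]{markmanalgebraic}. The key claim is that the projection $\fM_\phi^0 \to \fM_\Lambda^0$ onto the first factor is surjective. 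To see this, connect $(X,\marking_X)$ to $(X',\marking_{X'})$ by a generic twistor path in $\fM_\Lambda^0$. Along each twistor line with Kähler class $\omega$ on the current $X_i$, lift it to a diagonal twistor line in $\fM_\phi$. This requires $\marking_{Z_i}^{-1}\phi\,\marking_{X_i}(\omega)$ to be Kähler on $Z_i$. At generic points with $\Pic(X_i)=0$, the Kähler cone equals the positive cone. One then has to check that $\phi$ carries the positive cone to the positive cone rather than to its negative, and the choice of sign $\cnstsign$ guarantees this at the starting point. So every step of the path lifts, and the endpoint lies over $(X',\marking_{X'})$; call its second factor $(Z',\marking_{Z'})$. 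Since $\phi$ is a parallel-transport-compatible isometry, $Z'$ is a deformation of $Z$ and hence of the same type.

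Then Theorem~\ref{thm:hyperholdef} produces a twisted vector bundle $\cQ'$ on $X'\times Z'$ deformation equivalent to $\cQ$, and Theorem~\ref{thm:hyperholequivalence} shows that it induces a twisted derived equivalence $\Db(X',\alpha')\iso\Db(Z',\beta')$. Corollary~\ref{cor:bfielddeformationcor} identifies these Brauer classes as the parallel transports of $B'_X$ and $B'_Z$. Writing $\alpha' = -\alpha_{X'}$ and $\beta' = \alpha_{Z'}$ gives the stated form.

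The main obstacle is the lifting step. At the first step the starting $X$ is projective, not a generic point, so one first moves along a twistor line from $X$ using a class in the open cone $\cC_X$ supplied by Theorem~\ref{thm:hyperholexistence}. By hypothesis its image is Kähler on $Z$. This reaches a point with trivial Picard group, after which the positive-cone argument above applies.
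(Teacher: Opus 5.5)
Your proposal is correct and follows the paper's proof: you deform the bundle $\cQ$ of Theorem~\ref{thm:hyperholexistence} to a point $(X',\marking_{X'},Z',\marking_{Z'})$ of the component $\fM_\phi^0$, then apply Theorems~\ref{thm:hyperholdef} and~\ref{thm:hyperholequivalence}. The only difference is that the paper gets $(Z',\marking_{Z'})$ by citing \cite[Lemma 5.14]{markmanalgebraic}, whereas you essentially reprove that lemma by lifting a generic twistor path to diagonal twistor lines, starting with a generic class in $\cC_X$; this works because at points with trivial Picard group the K\"ahler and positive cones coincide, so membership in $\fM_\phi$ already forces $\phi$ to preserve the positive cone.
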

\begin{proof}
    Consider the hyperholomorphic sheaf $\cQ$ on $X \times Z$ from Theorem \ref{thm:hyperholexistence}, and let $$\phi = \cnstsign\psi \colon \rH^{2}(X,\bbQ) \iso \rH^2(Y,\bbQ)$$ be the rational Hodge isometry which sends a K\"ahler class to a K\"ahler class. Fix markings $\marking_X$ of $X$ and $\marking_Z$ of $Z$. \par 
    Since $X$ and $X'$ are deformation equivalent, there exists a marking $\marking_{X'}$ of $X'$ so that $(X,\marking_X)$ and $(X',\marking_{X'})$ are in the same connected component of $\fM_\Lambda$ where $\Lambda = \Lambda_{\Kthreen}$ or $\Lambda = \Lambda_{\OG}$ depending on the deformation type of $X'$. By Lemma \cite[Lemma 5.14]{markmanalgebraic} there exists a marked hyperk\"ahler $(Z',\marking_{Z'})$ such that 
    \[(X,\marking_X,Z,\marking_Z) \text{ and } (X',\marking_{X'},Z',\marking_{Z'})\]
    lie in the same connected component of $\fM_{\phi}$. From Theorems \ref{thm:hyperholdef} and \ref{thm:hyperholequivalence} it follows that there exists a vector bundle $\cQ'$ inducing a twisted derived equivalence of $X'$ and $Z'$. 
\end{proof}
We also know from Corollary \ref{cor:bfielddeformationcor} that the Brauer classes $\alpha_{X'} \in \Br(X')$ and $\alpha_{Z'} \in \Br(Z')$ appearing in the statement of the theorem above are determined by parallel transport of the B-fields $B_X'$ and $B_Z'$. 
\section{Applications to \texorpdfstring{hyperk\"ahler}{hyperkähler} manifolds of OG10-type}
\label{sec:applications}
As a consequence of the existence of these vector bundles on $\OG$ manifolds, we are able to translate a number of results known in the $\Kthreen$ case to the $\OG$ case. 
\subsection{The Lefschetz standard conjecture for hyperk\texorpdfstring{\"a}{ä}hler manifolds of OG10-type}
We begin with an easy consequence of the existence of hyperholomorphic derived equivalences which follows from a result of Markman. 
\begin{theorem} \label{thm:lsc}
    Suppose $X$ is a projective hyperk\"ahler manifold of OG10-type. Then the Lefschetz standard conjecture holds for $X$. 
\end{theorem}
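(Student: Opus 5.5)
We will follow Markman's strategy from \cite{markmanalgebraic}, which deduces the Lefschetz standard conjecture from the existence of a degree-reversing algebraic isometry coming from a hyperholomorphic kernel. The point is that the Lefschetz standard conjecture for a hyperkähler $X$ of a fixed deformation type follows once one produces an algebraic cycle on $X\times X'$ (for some $X'$ with an algebraic correspondence back to $X$) whose cohomological action is an LLV-equivariant, degree-reversing isometry. Such a map induces on $\rHt$ a degree-reversing isometry exchanging $e$ and $f$ up to scalars. Conjugating the Lefschetz operator $L_h=h\cup(-)$ by it, and composing with further algebraic isometries, yields $\Lambda_h$ up to the twisting by $\exp$ of classes. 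Since $\exp(\lambda)\cdot$ for $\lambda$ algebraic is given by an algebraic cycle, the dual Lefschetz operator becomes algebraic.

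First, given a projective $X$ of $\OG$-type, apply Corollary \ref{cor:hyperholequivfromdef} to obtain $Z'$ of $\OG$-type and a twisted vector bundle $\cQ'$ on $X\times Z'$ inducing a twisted derived equivalence; by Corollary \ref{cor:bfielddeformationcor} its Brauer classes come from parallel transport of $B_X',B_Z'$. Since $\cQ'$ is a deformation of $\cQ$ along diagonal twistor paths in $\fM_\phi^0$, the class $\kappa(\cQ')$ is a parallel transport of $\kappa(\cQ)$. It remains Hodge (it is built from $\ch^B$ normalized by $c_1^B$), and it is algebraic because it is built from the Chern character of an honest twisted sheaf. Here $\kappa(\cQ')$ is $\ch(\cQ'\otimes\det(\cQ')^{-1/r})$ formally, and $\cQ'^{\otimes r}\otimes\det^{-1}$ is untwisted. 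Hence $\Psi_{\cQ'}$ is induced by an algebraic cycle. By Lemma \ref{lem:posrankdegreversing}, it is an LLV-equivariant, degree-reversing Hodge isometry $\rH^*(X,\bbQ)\to\rH^*(Z',\bbQ)$, and its inverse (given by the adjoint kernel) is also algebraic.

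Next, fix an ample class $h$ on $X$ and set $h'=\widetilde\Psi_{\cQ'}$-image data on $Z'$. Using Theorem \ref{thm:taelman}, $\widetilde\Psi_{\cQ'}$ sends $e\mapsto c f$ and $f\mapsto c^{-1}e$ and restricts to an isometry $\psi$ on $\rH^2$. In the LLV algebra, $L_h$ corresponds to the element $e\wedge h$ of $\mathfrak{so}(\rHt)$, and $\Lambda_h$ corresponds (up to normalization by $h^2$) to $f\wedge h$. Thus
\[\Psi_{\cQ'}^{-1}\circ L_{\psi(h)}\circ \Psi_{\cQ'} = c\,\Lambda_h\cdot(\text{const}),\]
provided $\psi(h)$ is algebraic on $Z'$. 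This holds because $\psi$ is a Hodge isometry, so $\psi(h)$ is a rational Hodge class and hence algebraic on the projective $Z'$. This is Markman's argument in \cite[Section 4]{markmanalgebraic}. The composite is a product of algebraic correspondences, so $\Lambda_h$ is algebraic; since the Lefschetz standard conjecture is independent of the chosen ample class, we are done.

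The main obstacle is verifying that the Lie-theoretic identity needs only LLV-equivariance and degree reversal, and not integrality. We also need projectivity of $Z'$, which we expect to get by choosing the deformation so that $\psi$ of an ample class is Kähler and of positive square and rational (Huybrechts' projectivity criterion). Everything else reduces to results already established above and to \cite[Lemma 4.1, Theorem 1.2]{markmanalgebraic}.
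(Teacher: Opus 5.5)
Your proposal is correct and follows the paper's route. The paper's proof simply combines the degree-reversing, LLV-equivariant Hodge isometry $\Psi_{\cQ'}$ attached to the kernel of Corollary~\ref{cor:hyperholequivfromdef} with \cite[Lemma 1.6]{markmanalgebraic}, and your identity $\Psi_{\cQ'}^{-1}\circ L_{q\psi(h)}\circ\Psi_{\cQ'}=\Lambda_h$ (for a suitable $q\in\bbQ^\times$) is essentially the content of that lemma. The projectivity of $Z'$ that you flag needs no special choice of deformation: $\psi(h)$ is a rational $(1,1)$-class with $\psi(h)^2=h^2>0$, so $Z'$, and hence $X\times Z'$, is projective by Huybrechts' criterion, which is what makes $\kappa(\cQ')$, $\sqrt{\mathrm{td}}$ and $L_{\psi(h)}$ algebraic.
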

\begin{proof}
    This follows immediately from using the degree-reversing Hodge isometry coming from the Fourier--Mukai kernel of nonzero rank of Corollary \ref{cor:hyperholequivfromdef} in the OG10 case and \cite[Lemma 1.6]{markmanalgebraic}.
\end{proof}
\subsection{\texorpdfstring{$k$}{k}-cyclic Hodge isometries for the primitive cohomology of cubic fourfolds}
We return to the setting of Construction \ref{construction:hklattice}, assuming now that the hyperk\"ahler manifolds are all of OG10-type; as before, we assume that $k$ is large enough so that Proposition \ref{prop:transverse} holds. In particular, there are very good cubic fourfolds $C_X$ and $C_Z$ and a diagram of the form 
\begin{equation}
    \begin{tikzcd}
        X = \overline\IJ(C_X) \arrow[rd, "\pi_X"'] & & \overline\IJ_s(C_X) = Y = \overline\IJ_t(C_Z) \arrow[ld, "\pi_1"] \arrow[rd, "\pi_2"'] & & \overline\IJ(C_Z) = Z, \arrow[ld, "\pi_Z"] \\
        & \bbP^n & & \bbP^n & 
    \end{tikzcd}
\end{equation}
where $\overline\IJ_s(C_X)$ is a Tate--Shafarevich twist of $X = \overline\IJ(C_X)$ and $\overline\IJ_t(C_Z)$ is a Tate--Shafarevich twist of $Z = \overline\IJ(C_Z).$ As before, we take 
\[\cP_{X,Y} \in \Db(X \times Y,\mathrm{pr}_1^*\alpha_X), \quad \cP_{Y,Z} \in \Db(Y \times Z,\mathrm{pr}_2^*\alpha_Z)\]
to be the twisted Poincar\'e sheaves and 
\[\cQ \in \Db(X\times Z,\alpha_X \boxtimes \alpha_Z)\]
their convolution, along with natural B-field lifts $B_X$ of $\alpha_X$ and $B_Z$ of $\alpha_Z$ as ensured by Proposition \ref{prop:extmukaipoincare} and potentially different B-field lifts $B_X'$ of $\alpha_X$ and $B_Z'$ of $\alpha_Z$ given by
\[\mathrm{pr}_1^*B_X' + \mathrm{pr}_2^*B_Z' = -\frac{c_1^{{\mathrm{pr}_1^*B_X+\mathrm{pr}_2^*B_Z}}(\cQ)}{\rank(\cQ)},\]
see equation \eqref{eqn:newBfields}.
\par 
By construction, $X$, $Y$ and $Z$ are endowed with explicit markings. In particular, we identify 
\[\rH^2(Y,\bbZ) \simeq  \langle \gamma_1,\mu_1\rangle \oplus \langle \gamma_2,\mu_2\rangle \oplus U \oplus E_8(-1)^{\oplus 2}\oplus A_2(-1),\]
where $\isocls_1 \mapsto \gamma_1$ and $\isocls_2 \mapsto k\mu_1 + \gamma_2$ form a basis for $\Pic(Y) \simeq U(k)$, and $\gamma_1,\mu_1$ (resp. $\gamma_2,\mu_2$) are the natural basis for the first (resp. second) copy of the hyperbolic plane $U$ in $\Lambda_{\OG}$. In particular we find induced isomorphisms 
\[\isocls_1^\perp/\isocls_1 \simeq \langle \gamma_2,\mu_2\rangle \oplus U \oplus E_8(-1)^{\oplus 2} \oplus A_2(-1)\]
and 
\[\isocls_2^\perp/\isocls_2 \simeq \langle \mu_1,\gamma_1-k\mu_2\rangle \oplus U \oplus E_8(-1)^{\oplus 2} \oplus A_2(-1).\]
\par 
Using \cite[Theorem 4.6]{ijtwists}, we identify 
\[\rH^2(X,\bbZ) \simeq  \langle \isocls_X,\theta_X\rangle \oplus \isocls_1^\perp/\isocls_1 \simeq \langle \isocls_X ,\theta_X\rangle \oplus \langle \gamma_2,\mu_2\rangle\oplus U \oplus E_8(-1)^{\oplus 2} \oplus A_2(-1),\]
where $\isocls_X,\theta_X$ again form the basis of a hyperbolic plane, and with the parallel transport along the degenerate twistor line 
\[\rH^2(X,\bbZ) \iso \rH^2(Y,\bbZ)\]
sending $\isocls_X \mapsto \isocls_1 = \gamma_1$, $\theta_X \mapsto \mu_1$ and otherwise mapping by the obvious inclusion. \par 
Similarly, we identify 
\[\rH^2(Z,\bbZ) \simeq \langle \isocls_Z,\theta_Z\rangle \oplus\isocls_2^\perp/\isocls_2 \simeq \langle \isocls_Z,\theta_Z\rangle\oplus \langle \mu_1,\gamma_1-k\mu_2\rangle \oplus U \oplus E_8(-1)^{\oplus 2} \oplus A_2(-1)\]
with the parallel transport 
\[\rH^2(Z,\bbZ) \iso \rH^2(Y,\bbZ)\]
sending $\isocls_Z \mapsto \isocls_2 = k\mu_1 + \gamma_2$, $\theta_Z \mapsto \mu_2$, and mapping via the inclusion on the rest. \par 
As a consequence of this explicit lattice-theoretic description of the parallel transports, we can determine explicitly the B-fields $B_X'$ and $B_Z'$. For the following results, let $\epsilon = \pm1$ denote the sign of the restriction of the Poincar\'e isometries $\Phi_{\cP_X}^{\rHt}|_{\Hlprim^2(X,\bbZ)}$ and $\Phi_{\cP_Z}^{\rHt}|_{\Hlprim^2(Z,\bbZ)}$ ensured by Proposition \ref{prop:primitivepoincare}.

\begin{lemma}\label{lem:explicitBfields}
    With the choice of markings above, we have an identification 
    \[B_X' = 3\isocls_X - \epsilon\frac{\gamma_2}{k}\]
    and 
    \[B_Z' = 3\isocls_Z - \epsilon\frac{\gamma_1-k\mu_2}{k},\]
    where $\gamma_2 \in\isocls_1^\perp/\isocls_1 \simeq \Hlprim^2(X,\bbZ)$ and $\gamma_1-k\mu_2 \in \isocls_2^\perp/\isocls_2 \simeq \Hlprim^2(Z,\bbZ)$ are isotropic vectors of divisibility one. 
\end{lemma}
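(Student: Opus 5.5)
The plan is to pin down $B_X'$ and $B_Z'$ using only one fact: the map $\widetilde\Psi_\cQ$ is degree-reversing (Lemma~\ref{lem:posrankdegreversing}). I will use the factorization of $\widetilde\Psi_\cQ$ from the proof of Lemma~\ref{lem:isotropicpreserved}:
\[\widetilde\Psi_\cQ=\exp(B_Z')\circ\Phi^{\rHt}_{\cP_Z}\circ\rho_{Y,Z}\circ\rho_{X,Y}\circ\Phi^{\rHt}_{\cP_X}\circ\exp(B_X'),\]
together with the explicit formulas of Example~\ref{ex:explambdallv}, Proposition~\ref{prop:rank4sublattice} and Proposition~\ref{prop:primitivepoincare}. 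We have $n=5$, so $(n+1)/2=3$, and $\cnstsign=1$ since $n$ is odd. Write
\[B_X'=a\isocls_X+b\theta_X+\lambda,\qquad \lambda\in\Hlprim^2(X,\bbQ),\]
and let $c=(B_X')^2/2$. Degree-reversal forces $\widetilde\Psi_\cQ(e)\in\bbQ f$. Since $\exp(B_Z')$ fixes the line $\bbQ f$, this is equivalent to
\[\Phi^{\rHt}_{\cP_Z}\circ\rho_{Y,Z}\bigl(\rho_{X,Y}\Phi^{\rHt}_{\cP_X}(\exp(B_X')e)\bigr)\in\bbQ f.\]
By Proposition~\ref{prop:rank4sublattice} the only classes sent by $\Phi^{\rHt}_{\cP_Z}$ into $\bbQ f$ are multiples of $\isocls_Z$. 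Since $\rho_{Y,Z}(\isocls_2)=\isocls_Z$, the condition becomes
\[\rho_{X,Y}\Phi^{\rHt}_{\cP_X}(\exp(B_X')e)\in\bbQ\,\isocls_2\subset\rHt(Y,\bbQ).\]

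Next I compute the left-hand side. First, $\exp(B_X')e=e+a\isocls_X+b\theta_X+\lambda+cf$. Applying Proposition~\ref{prop:rank4sublattice} and Proposition~\ref{prop:primitivepoincare} gives
\[\Phi^{\rHt}_{\cP_X}(\exp(B_X')e)= b\,e-\theta_X+(3-a)f+(c-3b)\isocls_X+\epsilon\lambda .\]
Transporting to $Y$ with the markings fixed above ($\isocls_X\mapsto\gamma_1$, $\theta_X\mapsto\mu_1$, the identity on the Lagrangian primitive part, and $f\mapsto f$) yields
\[b\,e-\mu_1+(3-a)f+(c-3b)\gamma_1+\epsilon\lambda.\]
For this to be proportional to $\isocls_2=k\mu_1+\gamma_2$ I need the following:
\begin{itemize}
\item the $e$-coefficient vanishes, so $b=0$;
\item the $f$-coefficient vanishes, so $a=3$;
\item the $\gamma_1$-coefficient vanishes, so $c=0$;
\item matching the $\mu_1$-coefficient $-1$ forces the proportionality constant to be $-1/k$, hence $\epsilon\lambda=-\gamma_2/k$.
\end{itemize}
It follows that $\lambda=-\epsilon\gamma_2/k$. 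The condition $c=0$ is then automatic: $\gamma_2$ is isotropic and orthogonal to $\isocls_X$, so $(B_X')^2=(3\isocls_X-\epsilon\gamma_2/k)^2=0$. This consistency check confirms the computation.

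For $B_Z'$ I run the symmetric argument with the inverse map. Degree-reversal also forces $\widetilde\Psi_\cQ^{-1}(e)\in\bbQ f$. Equivalently, by Remark~\ref{rmk:transposermk}, I repeat the computation with the roles of $X$ and $Z$ swapped, using the transposed Poincar\'e kernels. The markings now send $\isocls_Z\mapsto\isocls_2$ and $\theta_Z\mapsto\mu_2$. The target condition becomes proportionality to $\isocls_1=\gamma_1$, which I rewrite inside $\isocls_2^\perp/\isocls_2$ using the basis $\langle\mu_1,\gamma_1-k\mu_2\rangle$. Since $\gamma_1-k\mu_2+k\mu_2=\gamma_1$, the same bookkeeping gives $B_Z'=3\isocls_Z-\epsilon(\gamma_1-k\mu_2)/k$.

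Finally I check the side claims. The vectors $\gamma_2$ and $\gamma_1-k\mu_2$ are isotropic, since $(\gamma_1,\mu_2)=0$. They have divisibility one, because $(\gamma_2,\mu_2)=1$ and $(\gamma_1-k\mu_2,\mu_1)=1$.

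I expect the main obstacle to be sign and convention bookkeeping. First, the transposed kernel in Remark~\ref{rmk:transposermk} and the inverse of the Poincar\'e isometry may introduce a factor $-\exp(6\isocls)$, as in the proof of Proposition~\ref{prop:rank4sublattice}. Second, the $Z$ side may come with a sign $\cnstsign$ or an orientation flip. Third, the $Z$-side identification is in the quotient $\isocls_2^\perp/\isocls_2$, so representatives in $\rH^2(Y)$ are only defined modulo $\isocls_2$, and this ambiguity must be accounted for when matching coefficients. I would handle these by checking the $Z$-side computation directly against the identity $(B_Z')^2=0$ and against the requirement $\widetilde\Psi_\cQ(\isocls_X)=\pm\isocls_Z$ from Lemma~\ref{lem:isotropicpreserved}.
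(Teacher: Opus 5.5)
Your argument is correct, but it pins down the B-fields by a different mechanism than the paper does.

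The paper pushes $f$ forward to obtain the extended Mukai vector of the fiber, $\Phi^{\rHt}_\cQ(f)=ke-3k\isocls_Z+\epsilon(\gamma_1-k\mu_2)$. It then invokes Beckmann's dictionary \cite[Lemma 4.13(iv)]{beckmann} to read off $\rank(\cQ)$ and $c_1^{B_Z}(\cQ|_{\{x\}\times Z})$, so $B_Z'=-c_1^{B_Z}/\rank$ follows directly from the definition.

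You instead keep $B_X'$ as an unknown in the factorization of $\widetilde\Psi_\cQ$ and use only the degree-reversal of Lemma~\ref{lem:posrankdegreversing}. Your linear system in $(a,b,c,\lambda)$ is solved correctly. Since it has a unique solution and the true $B_X'$ must satisfy it, the identification follows.

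Your route never uses the Mukai-vector/Chern-class dictionary or the rank normalization, and it carries the built-in check $(B_X')^2=0$. The paper's route has no unknowns to solve for, because $\exp(B_X')\cdot f=f$.

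That observation also simplifies your $Z$ side. The condition $\widetilde\Psi_\cQ^{-1}(e)\in\bbQ f$ is equivalent to $\widetilde\Psi_\cQ(f)\in\bbQ e$. The $\rH^2$-component of $\exp(B)\cdot(re+\lambda+sf)$ is $\lambda+rB$. Applying $\exp(B_Z')$ to the vector above therefore forces $B_Z'=3\isocls_Z-\epsilon(\gamma_1-k\mu_2)/k$ at once, which is exactly the paper's computation.

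None of the obstacles you anticipate actually arises:
\begin{itemize}
\item No transposed kernel or $-\exp(6\isocls)$ factor is needed; at worst you invert the explicit matrix of Proposition~\ref{prop:rank4sublattice}.
\item $\cnstsign=1$ because $n=5$.
\item There is no quotient ambiguity. The marking identifies $\Hlprim^2(Z,\bbZ)$ with $\langle\isocls_2,\mu_2\rangle^\perp\subset\rH^2(Y,\bbZ)$, and this sublattice contains $\mu_1$ and $\gamma_1-k\mu_2$.
\end{itemize}
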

\begin{proof}
    It is enough to consider the second identity, as the argument for the first is analogous. By definition, $B_Z' = -c_1^{B_Z}(\cQ|_{\{x\}\times Z})/\rank(\cQ)$. We determine this (twisted) Chern character by understanding the extended Mukai vector of $\cQ|_{\{x\}\times Z}$. \par 
    By \cite[Definition 4.11]{beckmann}, it suffices to understand the image of $f \in \rHt(X,-B_X,\bbQ)$ under $\Phi_{\cQ}^{\rHt}$, which by Proposition \ref{prop:extmukaipoincare} coincides with the composition 
    \[
    \begin{tikzcd}
        \rHt(X,-B_X,\bbQ) \arrow[d, equals] \arrow[rrrr,"\Phi_{\cQ}^{\rHt}"] & & & & \rHt(Z,B_Z,\bbQ)  \\
        \rHt(X,\bbQ) \arrow[r, "\Phi^{\rHt}_{\cP_X}"] & \rHt(X,\bbQ) \arrow[r, "\rho_{X,Y}"] & \rHt(Y,\bbQ) \arrow[r, "\rho_{Y,Z}"] & \rHt(Z,\bbQ) \arrow[r, "\Phi^{\rHt}_{\cP_Z}"] & \rHt(Z,\bbQ) \arrow[u, equals]
    \end{tikzcd}
    \]
    It is a straightforward calculation in terms of the parallel transports and Propositions \ref{prop:rank4sublattice} and \ref{prop:primitivepoincare} that 
    \begin{align*}
        \Phi^{\rHt}_\cQ(f) &= \Phi^{\rHt}_{\cP_Z} \circ \rho_{Y,Z} \circ \rho_{X,Y} \circ \Phi_{\cP_X}^{\rHt}(f) \\
        &= \Phi^{\rHt}_{\cP_Z} \circ \rho_{Y,Z} (\isocls_1) \\
        &= \Phi^{\rHt}_{\cP_Z} \circ \rho_{Y,Z} (k\mu_2 + \gamma_1-k\mu_2) \\
        &= \Phi^{\rHt}_{\cP_Z} (k\theta_Z + (\gamma_1 - k\mu_2)) \\
        &= ke - 3k\isocls_Z  + \epsilon(\gamma_1-k\mu_2),
    \end{align*}
    where $\gamma_1 - k\mu_2 \in \isocls_2^\perp/\isocls_2$ is an isotropic vector with divisibility one. Then by \cite[Lemma 4.13(iv)]{beckmann}, one can calculate 
    \[\rank(\cQ) = n!k^n, \qquad c_1^{B_Z}(\cQ|_{\{x\}\times Z}) = n!k^{n-1}\cdot(-3k\isocls_Z \pm (\gamma_1-k\mu_2))),\]
    and the description of $B_Z'$ follows. 
\end{proof}
\begin{remark}
    It is worth highlighting one potential point of confusion: since the other B-field $B_X'$ coincides with $-c_1^{B_X}(\cE|_{X\times\{z\}})/\rank(\cE)$, to calculate it as above, one should really use the functor 
    \[\Phi_{\cQ} \colon \Db(Z,-\alpha_Z) \to \Db(X,\alpha_X)\] 
    induced by $\cQ$ viewed as a functor in the opposite direction, rather than $\Phi_{\cQ}^{-1}$. 
\end{remark}
Note that there are canonical injections \[i_1 \colon \langle \isocls_1,\isocls_2\rangle^\perp \hookrightarrow \isocls_1^\perp/\isocls_1 \text{ and } i_2 \colon \langle \isocls_1,\isocls_2\rangle^\perp \hookrightarrow \isocls_2^\perp/\isocls_2.\]
It is easy to check that both injections are of index $k$ from the explicit description
\[\langle \isocls_1,\isocls_2\rangle^\perp = \langle \gamma_2,\gamma_1 - k\mu_2 \rangle \oplus U \oplus E_8(-1)^{\oplus 2} \oplus A_2(-1),\]
with 
\[i_1(\gamma_1 - k\mu_2) = -k\mu_2 \text{ and } i_2(\gamma_2) = -k\mu_1\]
and otherwise being given by the identity in the obvious way.
\begin{proposition} \label{prop:twistedpoincarelatticemaps}
    The diagrams 
    \[
    \begin{tikzcd}
    \langle \isocls_1,\isocls_2\rangle^\perp \arrow[d,hook] \arrow[rr,hook,"i_1"] & & \isocls_1^\perp /\isocls_1 \arrow[r,"\cdot \epsilon"] & \isocls_1^\perp/\isocls_1 \arrow[d,hook] \\
    \rHt(Y,\bbQ) \arrow[r,"(\Phi_{\cP_{X,Y}}^{\rHt})^{-1}"] & \rHt(X,-B_X,\bbQ) \arrow[rr,"\cdot \exp(-B_X')"] & & \rHt(X,\bbQ)
    \end{tikzcd}
    \]
    and 
    \[
    \begin{tikzcd}
    \langle \isocls_1,\isocls_2\rangle^\perp \arrow[d,hook] \arrow[rr,hook,"i_2"] & & \isocls_2^\perp/\isocls_2 \arrow[r, "\cdot \epsilon"] & \isocls_2^\perp/\isocls_2 \arrow[d,hook] \\
    \rHt(Y,\bbQ) \arrow[r,"\Phi_{\cP_{Y,Z}}^{\rHt}"] & \rHt(Z,B_Z,\bbQ) \arrow[rr,"\cdot \exp(B_Z')"] & & \rHt(Z,\bbQ)
    \end{tikzcd}
    \]
    commute.
\end{proposition}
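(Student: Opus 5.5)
The plan is to verify both diagrams directly on a $\bbZ$-basis of $\langle \isocls_1,\isocls_2\rangle^\perp$. I will treat the second diagram in detail; the first is symmetric, using Remark~\ref{rmk:transposermk} for the Poincar\'e sheaf viewed in the opposite direction and the analogous formula for $B_X'$.

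By Proposition~\ref{prop:extmukaipoincare}, as a map of lattices $\Phi_{\cP_{Y,Z}}^{\rHt}$ factors as $\Phi^{\rHt}_{\cP_Z}\circ\rho_{Y,Z}$, where $\rho_{Y,Z}$ is parallel transport, i.e.\ the inverse of the explicit map $\isocls_Z\mapsto \isocls_2$, $\theta_Z\mapsto\mu_2$, identity on $\isocls_2^\perp/\isocls_2\simeq\langle\mu_1,\gamma_1-k\mu_2\rangle\oplus U\oplus E_8(-1)^{\oplus2}\oplus A_2(-1)$. So the first step is to write each $\lambda\in\langle\isocls_1,\isocls_2\rangle^\perp$ as $\lambda = a\,\isocls_2+\lambda'$ with $\lambda'$ in that complement, and check that $\lambda'=i_2(\lambda)$. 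It suffices to treat the basis elements.
\begin{itemize}
\item For $\gamma_2=\isocls_2-k\mu_1$ we get $a=1$ and $\lambda'=-k\mu_1=i_2(\gamma_2)$.
\item For $\gamma_1-k\mu_2$, which is orthogonal to $\isocls_2$ and already lies in the complement, we get $a=0$ and $\lambda'=\lambda$.
\item For the summands $U\oplus E_8(-1)^{\oplus 2}\oplus A_2(-1)$ we likewise get $a=0$ and $\lambda'=\lambda$.
\end{itemize}
Hence $\rho_{Y,Z}(\lambda)=a\,\isocls_Z+i_2(\lambda)$.

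Next I apply $\Phi^{\rHt}_{\cP_Z}$. Proposition~\ref{prop:rank4sublattice} gives $\isocls_Z\mapsto -\cnstsign f$, and Proposition~\ref{prop:primitivepoincare} gives multiplication by $\epsilon$ on $\Hlprim^2(Z,\bbZ)=\isocls_2^\perp/\isocls_2$. The image is therefore
\[
-a\cnstsign f+\epsilon\, i_2(\lambda).
\]
Then I apply $\exp(B_Z')\cdot$ using Example~\ref{ex:explambdallv}: it fixes $f$ and sends $\mu\mapsto\mu+(\mu,B_Z')f$ for $\mu\in\rH^2$. The result is
\[
\epsilon\, i_2(\lambda)+\bigl(-a\cnstsign+\epsilon\,(i_2(\lambda),B_Z')\bigr)f.
\]
The diagram commutes exactly when the $f$-coefficient vanishes. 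I check this using Lemma~\ref{lem:explicitBfields}, namely $B_Z'=3\isocls_Z-\epsilon(\gamma_1-k\mu_2)/k$, together with $i_2(\lambda)\perp\isocls_Z$.
\begin{itemize}
\item For $\lambda=\gamma_2$: $(-k\mu_1,\gamma_1-k\mu_2)=-k$, so $\epsilon(i_2(\lambda),B_Z')=\epsilon\cdot(-\epsilon/k)(-k)=1$, and the coefficient is $1-\cnstsign$.
\item For $\lambda=\gamma_1-k\mu_2$: this vector is isotropic, so the pairing is $0$, and $a=0$.
\item For the remaining summands: both the pairing and $a$ vanish.
\end{itemize}

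So the claim reduces to $\cnstsign=1$ for the sign in Proposition~\ref{prop:rank4sublattice}, which holds in the OG10 case since $n=5$ is odd.

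The main obstacle is sign and convention bookkeeping, and the precise constants in Lemma~\ref{lem:explicitBfields} must be exactly consistent for the $f$-coefficient to vanish. The delicate points are the sign of $B_Z'$ in Lemma~\ref{lem:explicitBfields}, the sign conventions in $(\isocls,f)$, and, for the first diagram, the inverse $(\Phi^{\rHt}_{\cP_{X,Y}})^{-1}$ together with $\exp(-B_X')$. For the first diagram I would use $(\Phi^{\rHt}_{\cP_X})^{-1}(\isocls_X)$, computed from Proposition~\ref{prop:rank4sublattice} as $\cnstsign f$, and the pairing $(i_1(\gamma_1-k\mu_2),\gamma_2)=(-k\mu_2,\gamma_2)=-k$. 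I would cross-check all signs against the computation of $\Phi^{\rHt}_\cQ(f)$ in the proof of Lemma~\ref{lem:explicitBfields}.
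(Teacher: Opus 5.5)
Your proposal is correct and follows the paper's proof essentially step for step. You factor the bottom row as $\exp(B_Z')\circ\Phi^{\rHt}_{\cP_Z}\circ\rho_{Y,Z}$ via Proposition~\ref{prop:extmukaipoincare}, write $\rho_{Y,Z}(\lambda)=a\,\isocls_Z+i_2(\lambda)$, apply Propositions~\ref{prop:rank4sublattice} and~\ref{prop:primitivepoincare}, and use the explicit $B_Z'$ of Lemma~\ref{lem:explicitBfields} to cancel the $f$-component. You make explicit that this cancellation requires $\cnstsign=1$ (true since $n=5$), which the paper uses only tacitly; your final expression $\epsilon\, i_2(\lambda)$ is also the correct form of the paper's last line, which has a sign typo.
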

\begin{proof}
    We prove the commutativity of the second diagram; the first is similar. Write \[v = a\gamma_2+v_2 \in  \langle \isocls_1, \isocls_2\rangle^\perp\] where $v_2 \in \langle \gamma_1 - k\mu_2\rangle \oplus U \oplus E_8(-1)^{\oplus 2}\oplus A_2(-1)$. We observe that the bottom row of the diagram factors by Proposition \ref{prop:extmukaipoincare} as 
    \[\rHt(Y,\bbQ) \xrightarrow{\rho_{Y,Z}} \rHt(Z,\bbQ) \xrightarrow{\Phi_{\cP_Z}^{\rHt}}\rHt(Z,\bbQ) \xrightarrow{\cdot \exp(B_Z')}\rHt(Z,\bbQ).\]
    In particular, we find, using Proposition \ref{prop:rank4sublattice} and the explicit description of the B-fields in Lemma~\ref{lem:explicitBfields}, that 
    \begin{align*}
        \exp(B_Z')\cdot(\Phi_{\cP_Z}^{\rHt} \circ \rho_{Y,Z})(a\gamma_2+v_2) &= \exp(B_Z') \cdot  \Phi_{\cP_Z}^{\rHt}(a\isocls_Z - ak\mu_1+v_2) \\
        &= \exp(B_Z') \cdot (-af - ak\epsilon\mu_1 + \epsilon v_2) \\
        &= -af-ak\epsilon\mu_1 + \epsilon v_2+ (-ak\epsilon\mu_1+\epsilon v_2,B_Z')f \\
        &= -af - ak\epsilon\mu_1 + \epsilon v_2 + af \\
        & = \epsilon(ak\mu_1 + \epsilon v_2) \\
        &= \epsilon i_2(v),
    \end{align*}
    which allows us to conclude.
\end{proof} 
Since all of the rows in the diagrams of Proposition \ref{prop:twistedpoincarelatticemaps} are isometries, we may invert the top row of the first diagram and compose with the top row of the second diagram to conclude: 
\begin{corollary} \label{cor:kcyclic}
    The rational isometry $\widetilde\Psi_\cQ \colon \rHt(X,\bbQ) \to \rHt(Z,\bbQ)$ restricts to the map 
    \[ \Hlprim^2(X,\bbQ) \simeq (\isocls_1^\perp/\isocls_1)_\bbQ \xrightarrow{i_1^{-1}} \langle \isocls_1,\isocls_2 \rangle_\bbQ^\perp \xrightarrow{i_2} (\isocls_2^\perp/\isocls_2)_\bbQ \simeq \Hlprim^2(Z,\bbQ).\]
    In particular, this isometry is of $k$-cyclic type in the sense of \cite[Definition 3.1]{buskin}.
\end{corollary}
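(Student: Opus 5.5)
The plan is to factor $\widetilde\Psi_\cQ$ through $\rHt(Y,\bbQ)$ and apply the two diagrams of Proposition \ref{prop:twistedpoincarelatticemaps}.

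By the factorization used in the proof of Lemma \ref{lem:isotropicpreserved}, and by the definition of $\Psi_\cQ$ in Lemma \ref{lem:posrankdegreversing}, we have
\[\widetilde\Psi_\cQ = \bigl(\exp(B_Z')\cdot\bigr)\circ\Phi^{\rHt}_{\cP_{Y,Z}}\circ\Phi^{\rHt}_{\cP_{X,Y}}\circ\bigl(\exp(B_X')\cdot\bigr).\]
Let $v\in\Hlprim^2(X,\bbQ)\simeq(\isocls_1^\perp/\isocls_1)_\bbQ$ and set $w\coloneq i_1^{-1}(\epsilon v)\in\langle\isocls_1,\isocls_2\rangle^\perp_\bbQ$; this makes sense after tensoring with $\bbQ$, since $i_1$ has index $k$. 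The first diagram gives
\[\exp(-B_X')\cdot(\Phi^{\rHt}_{\cP_{X,Y}})^{-1}(w)=\epsilon\, i_1(w)=v.\]
Applying the inverse isometries, this means $\Phi^{\rHt}_{\cP_{X,Y}}(\exp(B_X')\cdot v)=w$, viewed in $\rHt(Y,\bbQ)$.

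The second diagram then gives $\exp(B_Z')\cdot\Phi^{\rHt}_{\cP_{Y,Z}}(w)=\epsilon\, i_2(w)$. Altogether,
\[\widetilde\Psi_\cQ(v)=\epsilon\, i_2(\epsilon\, i_1^{-1}(v))=i_2\circ i_1^{-1}(v),\]
because $\epsilon^2=1$. Under the identifications $\Hlprim^2(X,\bbQ)\simeq(\isocls_1^\perp/\isocls_1)_\bbQ$ and $\Hlprim^2(Z,\bbQ)\simeq(\isocls_2^\perp/\isocls_2)_\bbQ$ of Example \ref{ex:lagprimog10}, this is the stated map.

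It remains to check that the identifications match. The splittings $\rH^2(X)=\langle\isocls_X,\theta_X\rangle\oplus\isocls_1^\perp/\isocls_1$ and $\rH^2(Z)=\langle\isocls_Z,\theta_Z\rangle\oplus\isocls_2^\perp/\isocls_2$ must agree with the orthogonal complements of $\langle\isocls,\theta\rangle$ used in Definition \ref{def:lagprimcoh}. They do, because the markings were chosen so that $\theta_X\mapsto\mu_1$ and $\theta_Z\mapsto\mu_2$, and these are orthogonal to the complementary summands.

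For the $k$-cyclic claim, we check the conditions of \cite[Definition 3.1]{buskin}. The sublattices $\Lambda_1\coloneq i_1(\langle\isocls_1,\isocls_2\rangle^\perp)\subset\Hlprim^2(X,\bbZ)$ and $\Lambda_2\coloneq i_2(\langle\isocls_1,\isocls_2\rangle^\perp)\subset\Hlprim^2(Z,\bbZ)$ each have index $k$, and $\psi=i_2\circ i_1^{-1}$ is an integral isometry $\Lambda_1\iso\Lambda_2$. The quotients are cyclic of order $k$, generated by $\mu_2$ and $\mu_1$ respectively, by the explicit formulas $i_1(\gamma_1-k\mu_2)=-k\mu_2$ and $i_2(\gamma_2)=-k\mu_1$.

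If Buskin's definition is phrased on all of $\rH^2$, one extends $\psi$ using $\psi(\isocls_X)=\cnstsign\isocls_Z$ and $\psi(\theta_X)=\cnstsign\theta_Z$ from Lemma \ref{lem:isotropicpreserved}. Comparing those extra conditions with the explicit lattice data is the only step that needs some care; the rest is bookkeeping with the two commutative diagrams.
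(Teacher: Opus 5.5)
Your argument for the first claim is exactly the paper's. The paper just says it is clear from Proposition \ref{prop:twistedpoincarelatticemaps}, and your computation with $w=i_1^{-1}(\epsilon v)$ and $\epsilon^2=1$ is that step written out.

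For the $k$-cyclic claim you take a different route. The paper observes that $i_2\circ i_1^{-1}=T\circ R_u$, where:
\begin{enumerate}
\item $R_u$ is the reflection in $u=\gamma_2+k\mu_2$, a primitive vector of square $2k$ lying in a unimodular summand $U$;
\item $T$ is the integral isometry $\mu_2\mapsto\mu_1$, $\gamma_2\mapsto\gamma_1-k\mu_2$, equal to the identity on $U\oplus E_8(-1)^{\oplus 2}\oplus A_2(-1)$.
\end{enumerate}
$k$-cyclicity then follows because $R_u(v)$ is integral if and only if $k\mid (v,u)$, and $(\cdot,u)$ surjects onto $\bbZ$.

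You instead verify the defining property directly, by exhibiting the index-$k$ sublattices $\Lambda_1,\Lambda_2$ with cyclic quotients. This is more elementary. As written, however, it only shows that $\psi$ is integral on $\Lambda_1$. You also need $\Lambda_1=\Hlprim^2(X,\bbZ)\cap\psi^{-1}(\Hlprim^2(Z,\bbZ))$, i.e., that $\Lambda_1$ is the whole integrality locus. This is a one-line check:
\begin{enumerate}
\item Suppose $v$ and $\psi(v)$ are integral, and write $w=i_1^{-1}(v)=a\gamma_2+b(\gamma_1-k\mu_2)+r$.
\item Integrality of $i_1(w)=a\gamma_2-bk\mu_2+r$ gives $a\in\bbZ$ and $r$ integral.
\item Integrality of $i_2(w)=-ak\mu_1+b(\gamma_1-k\mu_2)+r$ gives $b\in\bbZ$.
\item Hence $w$ is integral and $v\in\Lambda_1$.
\end{enumerate}

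Your hedge about all of $\rH^2$ is harmless. In the OG10 case $\langle\isocls_X,\theta_X\rangle\cong U$ is an integral unimodular orthogonal summand on which $\psi$ is $\cnstsign$ times an integral isometry. So the integrality locus there is $U\oplus\Lambda_1$, with the same cyclic quotient of order $k$.

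What the paper's reflection form gives beyond your version is an explicit normal form. It is reused in Theorem \ref{thm:fanoequivalence}, where the same $R_u$ and the discriminant-trivial $T$ are extended to $\rH^2(F_1(C_X))$ via $g_X\mapsto g_Z$. With your approach, that extension would require redoing the index computation on the Fano lattice.
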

\begin{proof}
    The first claim is clear from the previous Proposition \ref{prop:twistedpoincarelatticemaps}. To check that this isometry is $k$-cyclic, we observe that (with respect to the markings used above) it coincides with the reflection 
    \[v \mapsto v - \frac{2(v,u)}{(u,u)}u\]
    across the vector $u = \gamma_2 + k\mu_2$, up to composition with the integral isometry 
    \[\Hlprim^2(X,\bbZ) \iso  \Hlprim^2(Z,\bbZ)\]
    which is given by $\mu_2 \mapsto \mu_1$, $\gamma_2 \mapsto \gamma_1-k\mu_2$ and the identity on the orthogonal complement which is isomorphic to $U \oplus E_8(-1)^{\oplus 2}\oplus A_2(-1)$ as discussed above.
\end{proof}
\begin{remark}
    While we choose to omit them here due to the presence of additional constants in the results, similar statements to Lemma \ref{lem:explicitBfields} and Corollary \ref{cor:kcyclic} hold (with identical proofs) for Construction \ref{construction:hklattice} for hyperk\"ahler manifolds of $\Kthreen$-type. 
\end{remark}
We can also give a criterion for when an integral vector in the Lagrangian primitive cohomology of $X$ or $Z$ is in the image of $\langle \eta_1,\eta_2\rangle^\perp$; this can be interpreted as the analogue of the formula directly before \cite[Conclusion 3.8]{buskin} for Buskin's cyclic isometries. 
\begin{proposition} \label{prop:divcondition}
    Let $v \in \Hlprim^2(X,\bbZ)$. Then \[k \mid (v,\gamma_2) \Longleftrightarrow v \in \mathrm{im}(i_1).\] 
    Likewise, if $v \in \Hlprim^2(Z,\bbZ)$, then \[k \mid (v,\gamma_1) \Longleftrightarrow v \in \mathrm{im}(i_2).\]
\end{proposition}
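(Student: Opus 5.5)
The proof reduces to an explicit computation in the marked lattice $\rH^2(Y,\bbZ)$. First I will use the identifications fixed before Lemma~\ref{lem:explicitBfields}: $\Hlprim^2(X,\bbZ)\simeq \isocls_1^\perp/\isocls_1$ and $\Hlprim^2(Z,\bbZ)\simeq\isocls_2^\perp/\isocls_2$, coming from \cite[Theorem 4.6]{ijtwists} and the parallel transports. Under these, the statement becomes a statement about the two quotient lattices. Write $R\coloneq U\oplus E_8(-1)^{\oplus 2}\oplus A_2(-1)$. Then
\[\isocls_1^\perp/\isocls_1=\langle\gamma_2,\mu_2\rangle\oplus R,\qquad \isocls_2^\perp/\isocls_2=\langle\mu_1,\gamma_1-k\mu_2\rangle\oplus R.\]
Both quotients are free with these bases, and $\langle\isocls_1,\isocls_2\rangle^\perp=\langle\gamma_2,\gamma_1-k\mu_2\rangle\oplus R$.

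Next I will check that each pairing in the statement is well defined on the quotient. For the first statement, $\gamma_2\in\isocls_1^\perp$ and $(\gamma_2,\isocls_1)=0$, so $(v,\gamma_2)$ does not depend on the lift of $v$. For the second statement, $\gamma_1=\isocls_1$ is not orthogonal to $\isocls_2$, since $(\isocls_1,\isocls_2)=k$. Changing the lift of $v$ by a multiple of $\isocls_2$ therefore changes $(v,\gamma_1)$ by a multiple of $k$. So $(v,\gamma_1)$ is only well defined modulo $k$, and this is exactly what the condition $k\mid(v,\gamma_1)$ needs. I expect this to be the only subtle point, and it should be stated explicitly.

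Then I will compute the images. By the explicit formulas given before Proposition~\ref{prop:twistedpoincarelatticemaps}, $i_1$ sends $\gamma_2\mapsto\gamma_2$ and $\gamma_1-k\mu_2\mapsto -k\mu_2$, and it is the identity on $R$. Hence $\mathrm{im}(i_1)=\langle\gamma_2,k\mu_2\rangle\oplus R$. Now write $v=a\gamma_2+b\mu_2+r$ with $r\in R$. Then $(v,\gamma_2)=b$, because $\gamma_2^2=0$, $(\mu_2,\gamma_2)=1$ and $R\perp\gamma_2$. So $v\in\mathrm{im}(i_1)$ if and only if $k\mid b$, which is the same as $k\mid(v,\gamma_2)$.

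The second case is analogous. The map $i_2$ sends $\gamma_2\mapsto -k\mu_1$ (modulo $\isocls_2=k\mu_1+\gamma_2$) and fixes $\gamma_1-k\mu_2$ and $R$. Hence $\mathrm{im}(i_2)=\langle k\mu_1,\gamma_1-k\mu_2\rangle\oplus R$. For $v=a\mu_1+b(\gamma_1-k\mu_2)+r$, compute with this lift: $(\mu_1,\gamma_1)=1$, $(\gamma_1-k\mu_2,\gamma_1)=0$ and $R\perp\gamma_1$. This gives $(v,\gamma_1)\equiv a \pmod k$, so $v\in\mathrm{im}(i_2)$ if and only if $k\mid a$, which is the same as $k\mid(v,\gamma_1)$.
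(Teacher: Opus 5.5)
Your proposal is correct and is the paper's own argument: the paper just says both equivalences follow from the explicit description of $i_1$ and $i_2$, and you carry out that computation correctly. You also point out that $(v,\gamma_1)$ is only well defined modulo $k$ on $\isocls_2^\perp/\isocls_2$, because $(\isocls_1,\isocls_2)=k$; the paper leaves this implicit, and it is a worthwhile clarification.
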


\begin{proof}
    This follows from the explicit description of the maps $$i_1 \colon  \langle \eta_1,\eta_2\rangle^\perp \to \eta_1^\perp/\eta_1 \simeq \Hlprim^2(X,\bbZ)$$ and $$i_2 \colon \langle \eta_1,\eta_2\rangle^\perp \to \eta_2^\perp/\eta_2 \simeq \Hlprim^2(Z,\bbZ)$$ given above.
\end{proof}

Corollary \ref{cor:kcyclic} has another interesting consequence. Recall that if $C \subset \mathbb P^5$ is a smooth cubic fourfold, then the Fano variety $F_1(C)$ of lines on $C$ is a hyperkähler manifold of $\mathrm{K}3^{[2]}$-type.
\begin{theorem}\label{thm:fanoequivalence}
    There is a twisted derived equivalence 
    \[\Db(F_1(C_X),\delta_X) \simeq \Db(F_1(C_Z),\delta_Z)\]
    for Brauer classes $\delta_X \in \Br(F_1(C_X))$ and $\delta_Z \in \Br(F_1(C_Z))$.
\end{theorem}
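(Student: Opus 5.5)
Since $F_1(C)$ is of $\mathrm{K}3^{[2]}$-type and the primitive cohomology of the Fano variety satisfies $\Hprim^2(F_1(C),\bbZ)\simeq \Hprim^4(C,\bbZ)(1)$ via the Abel--Jacobi (Beauville--Donagi) correspondence, the plan is to transport the $k$-cyclic isometry of Corollary~\ref{cor:kcyclic} to the Fano varieties. From there we invoke the $\Kthreen$ machinery, namely Buskin's and Markman's hyperholomorphic sheaves associated to cyclic Hodge isometries, or equivalently our Theorem~\ref{thm:hyperholdef} and Corollary~\ref{cor:hyperholequivfromdef}.

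\textbf{Step 1 (Hodge isometry).} Combining Example~\ref{ex:lagprimog10} with Corollary~\ref{cor:kcyclic}, we obtain a rational Hodge isometry
\[
\Hprim^4(C_X,\bbQ)(1)\simeq \Hlprim^2(X,\bbQ)\xrightarrow{i_2\circ i_1^{-1}}\Hlprim^2(Z,\bbQ)\simeq \Hprim^4(C_Z,\bbQ)(1),
\]
which is a reflection in $u=\gamma_2+k\mu_2$ composed with an integral isometry. Via the Beauville--Donagi isomorphism, this becomes a rational Hodge isometry $\Hprim^2(F_1(C_X),\bbQ)\to\Hprim^2(F_1(C_Z),\bbQ)$. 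We extend it to all of $\rH^2$ by sending the Plücker class $g_X$ to $g_Z$; this is an isometry because both have BBF square $6$. The result is a rational Hodge isometry $\phi\colon \rH^2(F_1(C_X),\bbQ)\to\rH^2(F_1(C_Z),\bbQ)$ that is still of cyclic type in Buskin's sense: it remains a reflection in $u$, now of square $2k$, composed with an integral isometry. We also check that $\phi$ is orientation-compatible and sends the polarization $g_X$ to $g_Z$, so that $\phi$ maps a Kähler class to a Kähler class.

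\textbf{Step 2 (realize by a twisted sheaf).} Markman's theorem \cite{markmanalgebraic}, building on Buskin, says that for $\Kthreen$-type manifolds every rational Hodge isometry that is a composition of cyclic ones and sends a Kähler class to a Kähler class is induced by a twisted hyperholomorphic sheaf that realizes a twisted derived equivalence. We invoke this result. Alternatively, we can argue with our own Corollary~\ref{cor:hyperholequivfromdef}: pass to the connected component of $\fM_\phi$ containing $(F_1(C_X),F_1(C_Z))$, connect it by a generic twistor path to a pair carrying a hyperholomorphic bundle with the same cohomological action in the $\mathrm{K}3^{[2]}$ case, and then apply Theorems~\ref{thm:hyperholdef} and~\ref{thm:hyperholequivalence} and Corollary~\ref{cor:bfielddeformationcor}. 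This yields the Brauer classes $\delta_X,\delta_Z$.

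\textbf{Main obstacle.} The hardest part is matching the isometry in Step 2 precisely. For $\mathrm{K}3^{[2]}$ we need a reference pair whose hyperholomorphic kernel induces the same $\phi$ on $\rH^2$, so that both pairs lie in the same component of $\fM_\phi$. The Remark following Corollary~\ref{cor:kcyclic} says the analogous construction in the $\Kthreen$ case gives $k$-cyclic isometries with the same shape, but the constant $n+1$ and the divisibility of the extra class $\bbZ(-2)$ differ. I would first try to cite Markman's classification directly (every $k$-cyclic isometry between $\mathrm{K}3^{[n]}$-type manifolds is realized), checking carefully that extending by $g_X\mapsto g_Z$ preserves the integrality conditions, such as the divisibility condition of Proposition~\ref{prop:divcondition}, that enter his definition.
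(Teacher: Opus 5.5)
\textbf{Verdict: right strategy, but the deciding lattice step is missing.} Your outline is the paper's: transport the isometry of Corollary~\ref{cor:kcyclic} through Example~\ref{ex:lagprimog10} and Beauville--Donagi, extend by $g_X\mapsto g_Z$, and invoke Markman. However, the step you defer to your ``main obstacle'' is exactly what the proof consists of, and you leave it undone.

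\textbf{The integrality of the extension is not automatic.} You assert that the extended isometry is ``still a reflection in $u$ composed with an integral isometry''. But $g^2=6$ and $\div(g)=2$, so $\Hprim^2(F_1(C),\bbZ)\oplus\bbZ g$ has index $3$ in $\rH^2(F_1(C),\bbZ)$. The overlattice is obtained by gluing along the $\bbZ/3$ discriminant groups. The integral isometry $J$ of Corollary~\ref{cor:kcyclic} is given by $\mu_2\mapsto\mu_1$, $\gamma_2\mapsto\gamma_1-k\mu_2$, and the identity on $U\oplus E_8(-1)^{\oplus 2}\oplus A_2(-1)$. Together with $g_X\mapsto g_Z$, it extends to an integral isometry $\rH^2(F_1(C_X),\bbZ)\to\rH^2(F_1(C_Z),\bbZ)$ only if $J$ is compatible with this gluing. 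The paper closes this by observing that $J$ is the identity on the discriminant group, since the $\bbZ/3$ comes from the untouched $A_2(-1)$ summand. Only then is $\phi$ a rational reflection in $u$ composed with an integral isometry of the full lattice. Here $u^2=2k$ and $(u,\mu_2)=1$, so the quotient is cyclic of order $k$, and $\phi$ is $k$-cyclic in Buskin's sense.

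\textbf{The integral factor must be a parallel-transport operator.} The paper gets this from \cite[Lemma 9.2]{markmanmonodromy}: in $\mathrm{K}3^{[2]}$-type, every orientation-preserving integral isometry is one. So the orientation check you mention belongs to the integral factor, not to $\phi$. The map $\phi$ is automatically orientation-preserving, being a Hodge isometry with $\phi(g_X)=g_Z$. Note also that $u^2>0$, so $R_u$ reverses orientation; the sign must be redistributed as $\phi=(-J)\circ(-R_u)$ before applying that lemma. With these two facts, \cite[Corollary 8.5]{markmanalgebraic} applies directly to this single $k$-cyclic Hodge isometry. No reference $\mathrm{K}3^{[2]}$ pair and no comparison with the $\Kthreen$ analogue of the construction is needed.

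\textbf{Two parts of your Step 2 should be dropped.}
\begin{itemize}
\item Markman's result does not say that a \emph{composition} of cyclic isometries is induced by a single twisted sheaf. A composition only yields a chain of twisted equivalences, whose intermediate Brauer classes need not match.
\item Your fallback through Corollary~\ref{cor:hyperholequivfromdef} cannot work as stated. That corollary only produces \emph{some} partner $Z'$ for a given $X'$, for the specific isometry $\psi$ of the construction, so it cannot be made to output $F_1(C_Z)$. It also presupposes exactly the reference pair you do not have.
\end{itemize}
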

\begin{proof}
    We first recall that by Example \ref{ex:lagprimog10} there is an identification 
    \[\Hlprim^2(X,\bbZ) \simeq \Hprim^4(C_X,\bbZ)(1).\]
    Moreover, it follows from a theorem of Beauville--Donagi \cite{beauvilledonagi}, see also \cite[Corollary 6.3.21]{huybrechtscubics}, that the Fano correspondence induces an isometry 
    \[\Hprim^4(C_X,\bbZ)(1) \simeq \Hprim^2(F_1(C_X,\bbZ)).\]
    Using the isometry of Corollary \ref{cor:kcyclic} and extending by sending the Pl\"ucker polarization $g_X$ to the Pl\"ucker polarization $g_Z$, it follows that there is a rational Hodge isometry 
    \[\rH^2(F_1(C_X),\bbQ) \simeq \Hprim^2(F_1(C_X),\bbQ) \oplus \bbQ g_X\simeq \Hprim^2(F_1(C_Z),\bbQ) \oplus \bbQ g_Z \simeq \rH^2(F_1(C_Z),\bbQ)\]
    which is $k$-cyclic. \par 
    To be more precise, reflection across the same element $u = \gamma_2 + k\mu_2$ induces a rational self-isometry of $\Hprim^2(F_1(C_X),\bbQ) \oplus \bbQ g_X$, and since the integral isometry \[\Hprim^2(F_1(C_X),\bbZ) \to \Hprim^2(F_1(C_Z),\bbZ)\] is the identity on the discriminant group, the induced integral isometry 
    \[\Hprim^2(F_1(C_X),\bbZ) \oplus \bbZ g_X \iso \Hprim^2(F_1(C_Z),\bbZ) \oplus \bbZ g_Z\]
    extends to an isometry of the full second cohomology. Composing the reflection with this integral isometry gives a $k$-cyclic Hodge isometry. \par 
    Since any orientation-preserving integral isometry of $\rH^2(-,\bbZ)$ for hyperk\"ahler manifolds of K3$^{[2]}$-type is a parallel transport operator by \cite[Lemma 9.2]{markmanmonodromy}, it then follows from the existence of such a $k$-cyclic Hodge isometry and \cite[Corollary 8.5]{markmanalgebraic} that there is a hyperholomorphic sheaf inducing a twisted derived equivalence of $F_1(C_X)$ and $F_1(C_Z)$.
\end{proof}
\subsection{The D-equivalence conjecture for \texorpdfstring{hyperk\"ahler}{hyperkähler} manifolds of OG10-type}
We finally present an analogue of the main theorem of \cite{msyz} in the case of the $\OG$-type. 
\begin{theorem}\label{thm:dequivalence}
    Suppose there is a birational equivalence $X \dashrightarrow X'$ between projective hyperk\"ahler manifolds of OG10-type. Then there is a derived equivalence $\Db(X) \simeq \Db(X')$.
\end{theorem}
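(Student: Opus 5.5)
We follow the strategy of Maulik--Shen--Yin--Zhang \cite{msyz} for the $\Kthreen$ case and adapt it to the $\OG$ setting using the hyperholomorphic bundles of Corollary~\ref{cor:hyperholequivfromdef} and the Tate--Shafarevich description of Theorem~\ref{thm:lagfibthm}.

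The first step is to reduce to the case where the birational map is compatible with a Lagrangian fibration. Let $\phi \colon X \dashrightarrow X'$ be the given birational map. We deform the pair $(X,X')$, together with the parallel-transport Hodge isometry $\phi^*$ on $\rH^2$, to a pair $(X_0,X_0')$ admitting a common nef isotropic class, i.e. to a birational map commuting with Lagrangian fibrations $X_0 \to \bbP^5 \leftarrow X_0'$. Both the Kähler cones and the MBM walls separating them are controlled by Theorem~\ref{thm:mbmcone} and Theorem~\ref{thm:mbmbound}, so we deform along the locus where the relevant MBM classes stay Hodge. The birational map then deforms as well, being determined by the parallel transport by global Torelli. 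Along this locus we impose an additional isotropic class $v$ of large divisibility-one type (Lemma~\ref{lem:isotropic}). By the second part of Theorem~\ref{thm:lagfibthm}, for a very general such choice $X_0$ and $X_0'$ are then both birational to a Tate--Shafarevich twist $Y_t$ of an LSV fibration $\overline\IJ(C)$, compatibly with the fibrations.

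Next, birational models of a Lagrangian fibration over the same base should be derived equivalent via the twisted Poincaré sheaf. The sheaf of Theorem~\ref{thm:poincare} gives $\Db(Y_t) \simeq \Db(\overline\IJ(C),\alpha_t)$, and the analogous sheaf built on $X_0'$ relative to its own fibration gives $\Db(X_0') \simeq \Db(\overline\IJ(C),\alpha_t')$. When the two fibrations agree over the open locus with integral fibers, the two Brauer classes $\alpha_t$ and $\alpha_t'$ coincide, so composing yields $\Db(X_0) \simeq \Db(X_0')$. Unlike in the construction of Section~\ref{sec:main_construction}, these kernels are Cohen--Macaulay sheaves rather than vector bundles, so they cannot be deformed directly.

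The final step transports this equivalence back to $(X,X')$, and this is the main obstacle. We plan to follow \cite{msyz}: factor $\Db(X_0) \simeq \Db(X_0')$ as the composition of a hyperholomorphic bundle equivalence from Corollary~\ref{cor:hyperholequivfromdef}, the untwisted $\Db(X_0') \to \Db(X_0')$ identity-type equivalence, and the inverse of another hyperholomorphic equivalence. Each of these factors is deformable over $\fM_\phi$ by Theorems~\ref{thm:hyperholdef} and \ref{thm:hyperholequivalence}. The delicate point is controlling the Brauer classes via Corollary~\ref{cor:bfielddeformationcor}, so that the twists cancel and the composite at $(X,X')$ is untwisted. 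The alternative is to show directly that the relevant kernel deforms along the component of $\fM_{\phi^*}$ containing $(X,X')$, for instance by proving that the Poincaré-type kernel is a limit of hyperholomorphic kernels. We would attempt the cancellation of B-fields first, using the explicit B-fields of Lemma~\ref{lem:explicitBfields} to check that $B'_X$ and $B'_Z$ pair to integral classes under the identification.
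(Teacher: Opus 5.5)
There is a genuine gap. Your Steps 1--2 cannot work as stated. Your Step 3, which you yourself flag as the main obstacle, is where the paper's two key ideas belong, and neither appears in it.

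\textbf{Steps 1--2.} For $X_0\dashrightarrow X_0'$ to commute with Lagrangian fibrations onto a common $\bbP^5$, the nef isotropic class $v$ must lie in the closure of both adjacent chambers, hence on the wall: $(v,\cW)=0$. Since $\cW$ must stay Hodge, it descends to a nonzero Hodge class of square $\cW^2\in[-\mbmconst,0)$ in $v^\perp/v\simeq\Hprim^4(C,\bbZ)(1)$. No ``very general'' choice removes this class, so $C$ is forced onto a Hassett divisor of small discriminant determined by $\cW$. You give no reason why such a $C$ is very good, or even lies in the image of the period map; these small-discriminant loci are exactly of the type excluded in Remark~\ref{rem:largenegativesquare}. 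So Theorem~\ref{thm:lagfibthm} is not available.

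Even granting it, Theorem~\ref{thm:poincare} only produces kernels for Tate--Shafarevich twists of $\overline\IJ(C)$ itself, not for a model flopped along vertical curves. Hence $\Db(X_0')\simeq\Db(\overline\IJ(C),\alpha_t')$ is unsupported. And, as you note, such Cohen--Macaulay kernels cannot be transported back to $(X,X')$ anyway. The paper never needs an equivalence at a special pair.

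\textbf{What the paper does instead.} It deforms the single hyperholomorphic bundle $\cQ_0$ on $X_0\times Z_0$ (Construction~\ref{construction:hklattice} with $k=g$) to $X\times Z''$ and to $X'\times Z''$ with the \emph{same} $Z''$. This gives $\Db(X)\simeq\Db(Z'',\alpha_{Z''})\simeq\Db(X')$. The twist $\alpha_{Z''}$ need not vanish; it is simply common to both sides. Both ingredients that make this work come from a class $\cD\in\Pic(X)$ of divisibility one with $(\cW,\cD)=N'$ fixed and $\cD^2=2g$ arbitrarily large (Lemma~\ref{lem:og10div1class}).

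\emph{Untwisting the $X$-side.} Make $\cD+gv$ isotropic of divisibility one (Lemma~\ref{lem:isotropicmodification}). Then use Eichler's criterion (Lemma~\ref{lem:eichlercriterion}) to choose the parallel transport $X_0\to X$ sending $-\epsilon\gamma_2\mapsto\cD+gv$. The B-field $3\isocls_{X_0}-\epsilon\gamma_2/g$ of Lemma~\ref{lem:explicitBfields} then becomes $\cD/g$ modulo $\rH^2(X,\bbZ)$, which is trivial in $\Br(X)$. The same holds on $X'$ via the birational marking. Your idea of checking that $B'_X$ and $B'_Z$ ``pair to integral classes'' is not the mechanism: for an arbitrary output of Corollary~\ref{cor:hyperholequivfromdef} the class $\alpha_{X'}$ is nonzero. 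It vanishes only because $k$ and the transport are adapted to $X$ through $\cD$.

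\emph{A common target.} To get a single $Z''$ one needs Lemma~\ref{lem:mbmmap}: $\psi$ does not send $\cW$ to the line of an MBM class on $Z$. Otherwise $g$-cyclicity (Corollary~\ref{cor:kcyclic}) and Proposition~\ref{prop:divcondition} give $g\mid(\cD+gv,a\cW)=aN'$ with $a^2<\mbmconst$, which is impossible for $g\gg0$. Only then can both $(X,Z'')$ and $(X',Z'')$ be placed in $\fM_\psi^0$, using the inseparability of $X$ and $X'$, so that Theorems~\ref{thm:hyperholdef} and~\ref{thm:hyperholequivalence} apply.

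Without these two ingredients, your composite of hyperholomorphic equivalences has mismatched targets and uncontrolled Brauer classes.
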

\begin{remark}
    Optimistically, one should expect that the twisted variant of \cite[Theorem 0.3]{msyz} still holds, and that a similar proof will suffice. As the proof will show, this is related to the question of whether a Brauer class admits a B-field lift which is isotropic with respect to the BBF form.
\end{remark}
We will follow closely the strategy of \cite{msyz}. We will need to make use of the following case of a criterion of Eichler. 
\begin{lemma}[{\cite[Section 10]{eichler} (see also \cite[Proposition 3.3(i)]{ghs})}] \label{lem:eichlercriterion}
    Let $v_1, v_2 \in \Lambda_{\OG}$ be primitive vectors such that $v_1^2 = v_2^2$ and the classes of $\frac{v_1}{\div(v_1)}$ and $\frac{v_2}{\div(v_2)}$ coincide in the discriminant group. Then there is an orientation-preserving isometry $g \in \widetilde{\SO}^+(\Lambda_{\OG})$ acting trivially on the discriminant group such that $gv_1 = v_2$.
\end{lemma}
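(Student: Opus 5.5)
The plan is to deduce the statement from Eichler's classical orbit criterion. The criterion applies because $\Lambda_{\OG}$ contains two orthogonal copies of the hyperbolic plane. Write
\[\Lambda_{\OG} = U_1 \oplus U_2 \oplus L, \qquad L = U \oplus E_8(-1)^{\oplus 2}\oplus A_2(-1),\]
with standard bases $e_i,f_i$ of $U_i$. The group we work with is the subgroup $E \subset O(\Lambda_{\OG})$ generated by the Eichler transvections
\[t(e,a)\colon x \mapsto x - (a,x)e + (e,x)a - \tfrac12 (a,a)(e,x)e,\]
where $e \in \{e_1,f_1,e_2,f_2\}$ is one of the standard isotropic vectors and $a \perp e$ is integral. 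I will show $E \subset \widetilde{\SO}^+(\Lambda_{\OG})$ and that $E$ already moves $v_1$ to $v_2$.

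First I verify that every $t(e,a)$ lies in $\widetilde{\SO}^+(\Lambda_{\OG})$. A direct computation shows that $t(e,a)$ is an isometry and that $t(e,a)t(e,b) = t(e,a+b)$ for $a,b \in e^\perp$. Since $e \in \Lambda$ and $(a,x), (e,x)$ are integers, $t(e,a)$ moves each $x \in \Lambda^\vee$ by an element of $\Lambda$, so it acts trivially on $\Lambda^\vee/\Lambda$. Moreover $t(e,sa)$ for $s \in [0,1]$ is a real path of isometries from the identity. Hence $t(e,a)$ lies in the identity component of $O(\Lambda_\bbR)$, so it has determinant $1$ and spinor norm $1$, i.e.\ it is orientation-preserving.

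Next comes the reduction to a normal form, which I expect to be the main obstacle. Let $v$ be primitive with $\div(v) = d$, and put $v^* = v/d \in \Lambda^\vee$. I will show $v$ is $E$-equivalent to
\[d e_1 + c f_1 + l_0, \qquad l_0 \in L,\]
where $l_0/d$ represents the class of $v^*$ in $L^\vee/L \simeq \Lambda^\vee/\Lambda$ and $c$ is determined by $v^2$. The steps are as follows.
\begin{enumerate}
\item Apply transvections along $e_2, f_2$ and a Euclidean algorithm on the coefficients to make the $U_1\oplus U_2$-component of $v$ have gcd of coefficients exactly $d$. Primitivity of $v$ together with $\div(v) = d$ makes this possible, since the pairings of $v$ with $\Lambda$ generate $d\bbZ$.
\item Clear the $U_2$-component using transvections $t(e_1,a)$ and $t(f_1,a)$ with $a \in U_2 \oplus L$.
\item Arrange the $e_1$-coefficient to be $d$.
\item Modify $l_0$ by $d\cdot L$ using $t(f_1,a)$ with $a \in L$. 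This shows that only the class of $l_0$ modulo $dL$ matters.
\end{enumerate}
Keeping track of divisibility in step (i) is the delicate point. I will follow the argument of \cite[Section 10]{eichler} as streamlined in \cite[Proposition 3.3]{ghs}.

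Finally I compare two normal forms. Suppose $v_1, v_2$ have equal squares and equal classes $v_i/\div(v_i) \in \Lambda^\vee/\Lambda$. The order of that class equals the divisibility, so $\div(v_1) = \div(v_2) = d$. Equal discriminant classes give $l_0^{(1)} \equiv l_0^{(2)} \bmod dL$, so by step (iv) we may assume the $l_0$ agree. Then equal squares force $2dc_1 = 2dc_2$, so $c_1 = c_2$. Hence some $g \in E \subset \widetilde{\SO}^+(\Lambda_{\OG})$ satisfies $gv_1 = v_2$, as required.
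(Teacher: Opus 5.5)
Your proposal is correct and follows the argument the paper relies on: the paper gives no proof of its own and simply cites Eichler's criterion (via \cite[Proposition 3.3(i)]{ghs}), and your transvection argument, with the normal-form reduction deferred to those same sources, is the standard proof given there. One small addition: integrality of $t(e,a)$ and its trivial action on $\Lambda^\vee/\Lambda$ also use that $\Lambda_{\OG}$ is even, so that $\tfrac12(a,a)(e,x)\in\bbZ$.
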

As input, we first establish a technical lemma for hyperk\"ahler manifolds of $\OG$-type analogous to those established in the $\Kthreen$ case in \cite{msyz}.
\begin{lemma}[{Analogue of \cite[Lemma 2.1 and Proposition 2.2]{msyz}}] \label{lem:og10div1class}
    Suppose $X$ is a projective hyperk\"ahler of $\OG$-type with Picard rank $\geq 2$, and $\cW \in \rH^{1,1}(X,\bbZ)$ is an MBM class. There exists a constant $N' > 0$ such that for any $M > 0$, we can find a class $\cD \in \Pic(X)$ with divisibility 1, $(\cW,\cD) = N'$ and $\cD^2 > M$.
\end{lemma}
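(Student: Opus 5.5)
Since $\Lambda_{\OG} = U^{\oplus 3}\oplus E_8(-1)^{\oplus 2}\oplus A_2(-1)$ has discriminant group $\bbZ/3\bbZ$, a primitive class $x\in \Pic(X)$ has divisibility $1$ or $3$. The argument is therefore a lattice-theoretic search inside $\Pic(X)$, which is hyperbolic of rank $\geq 2$ (signature $(1,\rho-1)$) because $X$ is projective. The only geometric input is the boundedness of MBM squares from Theorem~\ref{thm:mbmbound}. This pins down the finitely many possible values of $\cW^2$, hence of $\div(\cW)\in\{1,3\}$ (as a class in $\rH^2(X,\bbZ)$), and it lets $N'$ depend only on the deformation type.

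\emph{Step 1: reduce to a rank-two sublattice.} Fix an ample class $h\in\Pic(X)$ not proportional to $\cW$, and set $L=\langle \cW,h\rangle^{\mathrm{sat}}\subset \Pic(X)$. This $L$ is an even hyperbolic lattice of rank $2$: it contains $h$ with $h^2>0$ and $\cW$ with $\cW^2<0$. Write $\cW=m w_0$ with $w_0$ primitive. The pairing $(\cW,-)$ takes values in $\operatorname{gcd}\cdot\bbZ$ on $\rH^2(X,\bbZ)$, where the gcd is $m\cdot\div(w_0)\in m\{1,3\}$. Set $N'\coloneq 3m_{\max}$, with $m_{\max}$ bounded through Theorem~\ref{thm:mbmbound}. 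Alternatively, one can aim for $N'$ equal to the divisibility of $\cW$ in the full lattice and show that value is attained; this is the version I would actually try.

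\emph{Step 2: produce classes with the right pairing and large square.} Choose $\cD_0\in\Pic(X)$ with $(\cW,\cD_0)=N'$. I expect to get it from $\Pic(X)$ directly if $\div_{\Pic}(\cW)$ divides $N'$; this is where the choice of $N'$ has to be made compatible with $\Pic(X)$ rather than with $\rH^2$. Then consider
\[\cD_j \coloneq \cD_0 + j\,u,\qquad u\in \cW^\perp\cap \Pic(X).\]
Since $\cW^2<0$, the complement $\cW^\perp\cap\Pic(X)$ contains positive classes $u$ (take $u=\cW^2 h-(\cW,h)\cW$). Hence $\cD_j^2 = \cD_0^2 + 2j(\cD_0,u)+j^2u^2 \to\infty$ while $(\cW,\cD_j)=N'$ stays fixed.

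\emph{Step 3: divisibility $1$.} This is the main obstacle. First one needs $\cD_j$ to be primitive, which I would arrange by a Dirichlet-type argument as in Lemma~\ref{lem:primitivedifference}, choosing $j$ in suitable arithmetic progressions. Then the case $\div(\cD_j)=3$ must be excluded. That case means the image of $\cD_j$ in $\rH^2(X,\bbZ)^\vee/\rH^2(X,\bbZ)\cong\bbZ/3$, i.e.\ $\cD_j/3$ modulo the lattice, is nonzero, which is a linear condition modulo $3$ in $j$.

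If $(u,-)$ is not identically $0 \bmod 3$ on $\rH^2$, then $\div(\cD_j)=3$ holds for at most one residue of $j$ mod $3$, and we avoid it. If instead every admissible $u$ has divisibility divisible by $3$, I would replace $u$ by $u+3\cW'$ for a suitable Picard class $\cW'$, or use the second Picard direction to break the congruence. The genuinely delicate case is $\Pic(X)$ of rank exactly $2$ with all of $\Pic(X)$ sitting in the index-$3$ sublattice $\{x : 3\mid (x,\rH^2)\}$. There no divisibility-$1$ class exists at all, and the statement would have to rule this out. I would check this case against the discriminant form $A_2(-1)^\vee/A_2(-1)$, whose generators have square $-2/3$. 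My expectation is that one either gets a contradiction with $\cW$ being MBM, using the known MBM classes for $\OG$ from Mongardi--Onorati, or has to adjust $N'$ by a factor of $3$.
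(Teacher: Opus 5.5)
There is a genuine gap, and it sits where you say the case is ``genuinely delicate.'' Your overall shape matches the paper's: pass to the saturation $L=\langle h,\cW\rangle^{\mathrm{sat}}$, then add large multiples of a positive class in $\cW^\perp\cap\Pic(X)$. But you never produce a divisibility-one class, and you misdiagnose the obstruction. The case where $L$ (or $\Pic(X)$) lies entirely in $\{x : 3\mid (x,\rH^2(X,\bbZ))\}$ cannot occur. The reason is purely lattice-theoretic, not MBM-theoretic: $L$ is saturated of rank $2$ and the discriminant group is $\bbZ/3\bbZ$.

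Concretely, suppose a basis $H_1,H_2$ of $L$ had $\div(H_1)=\div(H_2)=3$. Then $H_1/3$ and $H_2/3$ are both nonzero in $\rH^2(X,\bbZ)^\vee/\rH^2(X,\bbZ)\cong\bbZ/3\bbZ$, so $H_1/3\equiv \pm H_2/3$. Hence $(H_1\mp H_2)/3$ is an integral class in $L_\bbQ$ that is not in $L$, contradicting saturation. So some basis element, say $H_1$, has divisibility one. If $(\cW,H_1)=0$, then $(\cW,H_2)\neq 0$, because $\cW\in L$ has $\cW^2<0$. Then either $H_2$ has divisibility one, or $\div(H_2)=3$; in the latter case $H_1+H_2$ is primitive with $(H_1+H_2,-)\equiv (H_1,-)$ mod $3$, so it has divisibility one and pairs nontrivially with $\cW$. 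This is the paper's first step. Your fallback of adjusting $N'$ by a factor of $3$ could never have rescued the situation you feared: if $\Pic(X)$ had no divisibility-one classes, no value of $N'$ would work.

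The second problem is the order of choices. You try to fix $N'$ in advance, in one of two ways, and neither works.
\begin{enumerate}
\item $N'=3m_{\max}$: Theorem~\ref{thm:mbmbound} only constrains primitive MBM classes, so the multiple $m$ in $\cW=mw_0$ is not bounded.
\item $N'=\div(\cW)$ in $\rH^2(X,\bbZ)$: the subgroup $(\cW,\Pic(X))$ can be strictly smaller than $(\cW,\rH^2(X,\bbZ))$, so this value need not be attained by any Picard class.
\end{enumerate}
The statement only requires $N'$ depending on $X$ and $\cW$. That is also all Lemma~\ref{lem:mbmmap} uses, since $\cD^2=2g>2\mbmconst N'$ is chosen after $N'$.

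So set $N'=(\cW,\cA)$ for the divisibility-one class $\cA$ found above, replacing $\cA$ by $-\cA$ if necessary. Then take $\cD=\cA+t\omega$ with $0\neq\omega\in\cW^\perp\cap L$, which is positive, exactly as in your Step 2. Divisibility one then follows directly, without any Dirichlet argument. Extend $\cA$ to a basis $\{\cA,B\}$ of $L$ and write $\omega=a\cA+bB$; here $b\neq 0$ because $(\cW,\cA)\neq 0=(\cW,\omega)$. For any $t\in 3b\bbZ$, the class $\cD=(1+ta)\cA+tbB$ is primitive and congruent to $\cA$ modulo $3\rH^2(X,\bbZ)$, hence has divisibility $1$. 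Moreover $(\cW,\cD)=N'$ and $\cD^2\to\infty$ as $t\to\infty$.
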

\begin{proof}
    We first construct a class $\cA \in \Pic(X)$ of divisibility 1 such that $(\cW,\cA) \neq 0$ as follows. Recall that the discriminant of $\Lambda_{\OG}$ is $\bbZ/3\bbZ$. \par 
    Let $\langle H_1, H_2\rangle$ generate a primitive nondegenerate sublattice of $\Pic(X)$ containing $\cW$ (e.g. the saturation of the sublattice generated by the ample class on $X$ and $\cW$). We may assume that either $\div(H_1) = 1$ or $\div(H_2) = 1$; otherwise since the divisibility divides the order of the discriminant group we have $\div(H_1) = \div(H_2) = 3$, and for some choice of sign it follows that
    \[H_1/3 \pm H_2/3\]
    is trivial in the discriminant group $\bbZ/3\bbZ$, contradicting the primitivity of $\langle H_1, H_2\rangle$. \par 
    Assume without loss of generality that $H_1$ has divisibility 1 and $H_2$ has divisibility 3. If $(\cW,H_1) \neq 0$, we may take $\cA = H_1$. Otherwise, the nondegeneracy forces $(\cW,H_2) \neq 0$; then since $\langle H_1 + H_2, H_2 \rangle = \langle H_1, H_2\rangle$ generates the same primitive sublattice, the assumption that $\div(H_2) = 3$ implies that $H_1 + H_2$ must be of divisibility 1, so we may take $\cA = H_1 + H_2$ and $C_1 = (\cW,\cA)$, which we may assume up to replacing $\cA$ by $-\cA$ is positive. \par 
    We may then proceed as in \cite[Proposition 2.2]{msyz} by taking some class $\omega \in \Pic(X) \cap \cW^\perp$ with $\omega^2 > 0$ and setting $\cD = \cA + t\omega$ for appropriate $t \gg 0$ which has $\div(\cD) = 1$. 
\end{proof}
We also make use of the following. 
\begin{lemma} \label{lem:isotropicmodification}
    Let $\cD \in \Pic(X)$ be a class of divisibility $1$ satisfying $\cD^2 = 2g$. Then there exists an integral vector $v \in \rH^2(X,\bbZ)$ such that $\cD + gv$ is an isotropic vector of divisibility $1$.
\end{lemma}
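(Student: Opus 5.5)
We reduce to a lattice computation in $\Lambda_{\OG} = U^{\oplus 3}\oplus E_8(-1)^{\oplus 2}\oplus A_2(-1)$, using that $\rH^2(X,\bbZ)\simeq \Lambda_{\OG}$ (Rapagnetta). Since $\cD$ has divisibility $1$, the class $\cD/\div(\cD)=\cD$ is trivial in the discriminant group $\bbZ/3\bbZ$. The same holds for the vector $w_g \coloneq \gamma + g\mu$ in a copy of $U=\langle\gamma,\mu\rangle$, which is primitive, has divisibility $1$, and satisfies $w_g^2 = 2g$. By Eichler's criterion (Lemma~\ref{lem:eichlercriterion}) there is an isometry $h\in\widetilde{\SO}^+(\Lambda_{\OG})$ with $h(w_g)=\cD$. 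So it suffices to solve the problem for $w_g$ and then transport the answer by $h$. An isometry preserves integrality, squares and divisibility, so this transport is harmless. Note that $v$ is only required to be an integral class in $\rH^2(X,\bbZ)$, not a Hodge class, so no Hodge condition is lost.

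For $w_g=\gamma+g\mu$ we take $v=-\mu$. Then $w_g+gv=\gamma$, which is isotropic. It has divisibility $1$ because $(\gamma,\mu)=1$. Hence for $\cD$ we may set $v \coloneq -h(\mu)$, giving $\cD + gv = h(\gamma)$, an isotropic class of divisibility $1$.

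Two points need checking. First, that $\cD$ is primitive, as Eichler's criterion requires: this follows because a class of divisibility $1$ is automatically primitive, since if $\cD=m\cD'$ then $m$ divides $(\cD,-)$. Second, that $g$ may be any integer: $\cD^2$ is even because $\Lambda_{\OG}$ is an even lattice, so $g\in\bbZ$. If $g=0$ we simply take $v=0$, since then $\cD$ is already isotropic of divisibility $1$. Lemma~\ref{lem:eichlercriterion} applies for all values of $\cD^2$, so there are no further cases.

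The main obstacle is justifying the use of Eichler's criterion, i.e.\ that $\Lambda_{\OG}$ contains two hyperbolic planes, as in \cite{ghs}. Here $\Lambda_{\OG}$ contains $U^{\oplus 3}$, so this holds. Everything else is direct computation.
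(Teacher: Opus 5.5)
Your proof is correct and takes the same approach as the paper: both use Lemma~\ref{lem:eichlercriterion} to write $\cD=\gamma_1'+g\mu_1'$ inside a hyperbolic plane $\langle\gamma_1',\mu_1'\rangle$. Your choice $v=-\mu_1'$, which gives $\cD+gv=\gamma_1'$, is simpler than the paper's $v=(g-1)\mu_1'+(\gamma_2'-\mu_2')$ because it avoids the second copy of $U$, and it is enough for the later application, where only the integrality of $v$ matters.
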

\begin{proof}
    Since $\cD$ is of divisibility 1, by Lemma \ref{lem:eichlercriterion}, we may write $\cD = \gamma_1' + g\mu_1'$ for $U = \langle \gamma_1',\mu_1'\rangle$ a copy of the hyperbolic plane. Moreover, it is clear that the orthogonal complement of $\langle \gamma_1',\mu_1'\rangle$ in $H^2(X, \mathbb Z)$ contains a second copy $U = \langle \gamma_2',\mu_2'\rangle$ of the hyperbolic plane. \par 
    It is then enough to take 
    \[v = (g-1)\mu_1' + (\gamma_2'-\mu_2'),\]
    as 
    \begin{align*}
        (\cD+gv)^2 &= (\gamma_1'+g^2\mu_1')^2+g^2(\gamma_2'-\mu_2')^2 \\
        &= 2g^2 - 2g^2 = 0
    \end{align*}
    and $(\cD + gv, \mu_1') = 1$.
\end{proof}
For the following, we closely follow the arguments and notation used in \cite[Section 2]{msyz}.
\begin{proof}[Proof of Theorem \ref{thm:dequivalence}]
    If $\rank\Pic(X) = 1$, the result is immediate since $X$ therefore has no nontrivial birational models, so we may assume that $\rank\Pic(X) \geq 2$. Moreover, we may assume that $X$ and $X'$ arise from adjacent chambers in the birational K\"ahler cone of $X$, and are separated by a wall coming from an MBM class $\cW \in \rH^{1,1}(X,\bbZ).$ \par 
    Using Lemma \ref{lem:og10div1class}, there exists a class $\cD \in \Pic(X)$ of divisibility 1 such that \[(\cW,\cD) = N' \qquad \text{and} \qquad \cD^2 = 2g > 2\mbmconst N',\] where $N$ is the constant from Theorem \ref{thm:mbmbound}. By using Lemma \ref{lem:isotropicmodification}, there exists also a class $v \in \rH^2(X,\bbZ)$ such that 
    \[(\cD + gv)^2 = 0\]
    and $\cD+gv$ is of divisibility $1$.
    \par 
    We now carefully choose a parallel transport of $X$ to a compactified LSV fibration. Choose a pair of hyperk\"ahler manifolds of $\OG$-type $X_0, Z_0$ as in Construction \ref{construction:hklattice} with $k = g$; moreover, since we are free to take $g \gg 0$ large, we may assume that Proposition \ref{prop:transverse} holds, and that there exists a twisted hyperholomorphic vector bundle $\cQ_0 \in \Db(X_0\times Z_0,\alpha_0\boxtimes \beta_0).$ Let us also recall that there is a B-field $B_{X_0}'$ for $\alpha_0$ which, under the explicit marking 
    \[\rH^2(X_0,\bbZ) \simeq \langle\isocls_{X_0},\theta_{X_0}\rangle \oplus \langle \gamma_2,\mu_2\rangle \oplus U \oplus E_8(-1)^{\oplus2} \oplus A_2(-1),\]
    is identified with $3\isocls_X - \frac{\epsilon\gamma_2}{g}$ by Lemma \ref{lem:explicitBfields}, where $\isocls_X$ is the pullback of $\cO_{\bbP^5}(1)$ under the Lagrangian fibration and $\gamma_2 \in \Hlprim^2(X_0,\bbZ)$ is an isotropic vector of divisibility 1 orthogonal to $\isocls_X$ and $\theta_X$. 
    \par
    Now observe that the elements 
    \[-\epsilon\gamma_2\qquad \text{and} \qquad \cD + gv \]
    are both isotropic vectors of divisibility 1, so by Lemma \ref{lem:eichlercriterion}, there exists an orientation-preserving isometry 
    \[\rho_{X_0,X} \colon \rH^2(X_0,\bbZ) \iso \rH^2(X,\bbZ)\]
    which sends $-\epsilon \gamma_2$ to $\cD + gv$, which by \cite[Theorem 5.4]{onoratithesis} is given by a parallel transport. \par 
    Fix markings $\marking_{X_0} \colon \rH^2(X_0,\bbZ) \iso \Lambda_{\OG}$ and $\marking_{Z_0} \colon \rH^2(Z_0,\bbZ) \iso \Lambda_{\OG}$, let $\psi \coloneq \widetilde\Psi_{\cQ_0}|_{\rH^2(X,\bbZ)}$ be the associated rational Hodge isometry of $\Lambda_{\OG}$ induced by $\cQ_0$, and let $\fM_{\psi}^0$ be the connected component of the moduli space of pairs containing $(X_0,\marking_{X_0},Z_0,\marking_{Z_0})$. We may complete $X$ and $X'$ to points 
    \[(X,\marking_X,Z,\marking_Z), (X',\marking_{X'}, Z',\marking_{Z'}) \in \fM_\psi^0\]
    such that $(X,\marking_X)$ and $(X',\marking_{X'})$ are inseparable points of $\fM_{\Lambda_{\OG}}$, and where the marking $\marking_{X'}$ is given by composition of $\marking_X$ with the isometry 
    \[\rH^2(X,\bbZ) \iso \rH^{2}(X',\bbZ)\]
    induced by the birational map; it follows also that $(Z,\marking_Z)$ and $(Z',\marking_{Z'})$ are inseparable points of $\fM_{\Lambda_{\OG}}$. \par 
    By Lemma \ref{lem:mbmmap} below, there is a point on the wall associated to the MBM class $\cW \in \Pic(X)$ between $\mathrm{Kah}(X)$ and $\mathrm{Kah}(X')$ which is sent under $\psi$ to the interior of a chamber of $\mathrm{Pos}(X')$, and we may therefore find some $(Z'',\marking_{Z''})$ inseparable from $(Z,\marking_Z)$ giving (using \cite[Lemma 1.2]{msyz}) points in the connected component 
    \[(X,\marking_X, Z'',\marking_{Z''}), (X',\marking_{X'},Z'',\marking_{Z''}) \in \fM_\psi^0.\]
    Moreover, by Theorems \ref{thm:hyperholdef} and \ref{thm:hyperholequivalence} there exist twisted hyperholomorphic vector bundles $\cQ$ on $X \times Y''$ and $X' \times Y''$, with Brauer classes determined using Corollary \ref{cor:bfielddeformationcor} by parallel transport of the B-field 
    \[3\isocls_X - \frac{\epsilon \gamma_2}{gtd}.\]
    Since $\isocls_X$ is integral, it is integral on any deformation and has no contribution to the Brauer class, and we may ignore it. But 
    \begin{align*}
        \rho_{X_0,X}\left(-\frac{\epsilon \gamma_2}{g}\right) &= \frac{\cD + gv}{g} \\
        &= \frac{\cD}{g} + v \in \Pic(X)_\bbQ + \rH^{2}(X,\bbZ)
    \end{align*}
    represents the trivial Brauer class on $X$. Since the marking $\marking_{X'}$ of $X'$ is determined by $\marking_X$ and the birational map (which is an integral Hodge isometry), the Brauer class on $X'$ vanishes as well. Since the Brauer class on $Z''$ is determined completely by the marking, we conclude that there exist derived equivalences 
    \[\Db(X) \simeq \Db(Z'',\alpha_{Z''}) \simeq \Db(X')\]
    for some Brauer class $\alpha_{Z''} \in \Br(Z'').$ This completes the argument. 
\end{proof}
\begin{lemma}[{Analogue of \cite[Proposition 2.4]{msyz}}]\label{lem:mbmmap}
    The rational Hodge isometry 
    \[ \rho_{Z_0,Z}\circ \psi \circ \rho_{X,X_0} \colon \rH^2(X,\bbQ) \iso \rH^2(Z,\bbQ)\]
    does not send $\cW$ to the line spanned by any MBM class on $Z$. 
\end{lemma}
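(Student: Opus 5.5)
Write $\Theta \coloneq \rho_{Z_0,Z}\circ \psi \circ \rho_{X,X_0}$. The plan is to compute $\Theta(\cW)$ explicitly enough to bound its square after rescaling to a primitive integral vector, and then to contradict Theorem~\ref{thm:mbmbound}. This follows the argument of \cite[Proposition 2.4]{msyz}. The input is Corollary~\ref{cor:kcyclic} together with Lemma~\ref{lem:isotropicpreserved}. On $\rH^2(X_0,\bbQ) = \langle \isocls_{X_0},\theta_{X_0}\rangle_\bbQ \oplus \Hlprim^2(X_0,\bbQ)$, the isometry $\psi$ acts as $\cnstsign$ on the hyperbolic plane. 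On the Lagrangian primitive part it is $\epsilon\, i_2\circ i_1^{-1}$, i.e.\ a reflection in $u=\gamma_2+g\mu_2$ composed with an integral isometry.

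First I will make the action on integral classes explicit. Proposition~\ref{prop:divcondition} and the explicit form of $i_1,i_2$ give the following. If $w \in \rH^2(X_0,\bbZ)$ and $g \mid (w,\gamma_2)$, then $\psi(w)$ is integral. In general $g\,\psi(w)$ is integral, and $\psi(w) \equiv -\frac{(w,\gamma_2)}{g}\,\mu_1$ modulo integral classes; the exact form is read off from $i_2(\gamma_2) = -g\mu_1$ and $i_1(\gamma_1 - g\mu_2) = -g\mu_2$. Since $\rho_{X,X_0}$ and $\rho_{Z_0,Z}$ are integral isometries, $\Theta$ therefore has denominators dividing $g$. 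Whether $\Theta(\cW)$ is integral is governed by $(\rho_{X,X_0}\cW,\gamma_2)$. By construction $\rho_{X_0,X}(-\epsilon\gamma_2) = \cD + gv$, so
\[(\rho_{X,X_0}\cW, \gamma_2) = -\epsilon(\cW,\cD+gv) = -\epsilon\bigl(N' + g(\cW,v)\bigr).\]
Modulo $g$ this is $-\epsilon N'$, and it is nonzero because $g > \mbmconst N' \geq N'$.

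Next I will show that $\Theta(\cW)$ is not a rational multiple of a short primitive vector. Write $\Theta(\cW) = \lambda + \frac{c}{g}\mu_1'$, where $\lambda$ is integral, $\mu_1'$ is the transport of $\mu_1$ (a primitive class of divisibility one), and $c \equiv \pm N' \pmod g$ with $0<|c|<g$. Let $m$ be the smallest positive integer with $m\,\Theta(\cW)$ integral. The fractional part $\frac{c}{g}\mu_1'$ is controlled by the divisibility-one class $\mu_1'$, so $m = g/\gcd(c,g) \geq g/N'$. Let $z$ be the primitive vector on the line through $\Theta(\cW)$. Then $z = \frac{m}{d}\Theta(\cW)$, where $d$ divides $\div(\cW)$, and $\div(\cW)\in\{1,3\}$ for $\OG$, as in the proof of Lemma~\ref{lem:og10div1class}. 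Since $\Theta$ is an isometry,
\[z^2 = \frac{m^2}{d^2}\,\cW^2 \leq -\frac{g^2}{9N'^2}.\]
Here I use $\cW^2 \leq -1$, which holds because $\cW$ is integral of negative square. Choosing $g$ large relative to $\mbmconst$ and $N'$ makes $-z^2 > \mbmconst$, so $z$ cannot be an MBM class by Theorem~\ref{thm:mbmbound}. Since $g$ was an arbitrary large parameter in the proof of Theorem~\ref{thm:dequivalence}, I will strengthen the requirement there to $g > 3N'\sqrt{\mbmconst}$; this only requires taking $g$ larger in the same construction.

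The main obstacle is the bookkeeping in the first step. One has to track exactly how $i_2\circ i_1^{-1}$ introduces the denominator $g$, with the correct signs $\epsilon,\cnstsign$ and the correct $\mu_1$ versus $\mu_2$ conventions. One also has to make sure that the hyperbolic-plane components $(\cW,\isocls)$ and $(\cW,\theta)$ contribute no further denominators. Lemma~\ref{lem:isotropicpreserved} says $\psi$ acts on the plane $\langle\isocls,\theta\rangle$ integrally, so only the pairing with $\gamma_2$ contributes a denominator. I would first verify this on the explicit basis $\gamma_2,\mu_2$ and on the common orthogonal summand $U\oplus E_8(-1)^{\oplus 2}\oplus A_2(-1)$. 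Finally, I would verify the claim that the order of $\frac{c}{g}\mu_1'$ modulo the integral lattice is exactly $g/\gcd(c,g)$; this holds because $\mu_1'$ has divisibility one, so it pairs to $1$ with some integral class.
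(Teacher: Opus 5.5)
Your strategy is the same as the paper's. You detect the denominator of the image of $\cW$ through the pairing with $\gamma_2$, which $\rho_{X_0,X}$ carries to $-\epsilon(\cD+gv)$, and you play this off against the MBM bound of Theorem~\ref{thm:mbmbound}. Your computation that the minimal denominator is $m = g/\gcd(g,N') \geq g/N'$ is correct. It is the paper's ``$g \mid aN'$'', and your $m$ is the paper's $a$.

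One bookkeeping point: the fractional part lies along $\gamma_1 - g\mu_2$, not along $\mu_1$. This is because $i_1^{-1}(\mu_2) = -\frac{1}{g}(\gamma_1 - g\mu_2)$ and $i_2$ fixes $\gamma_1 - g\mu_2$. That class is also primitive, so $m$ is unaffected.

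The final estimate, however, rests on a false claim: $d$ need not divide $\div(\cW)$. Here is a counterexample. Take $y$ in the common summand $U\oplus E_8(-1)^{\oplus 2}\oplus A_2(-1)$ with $y^2<0$, and set $w = \gamma_2 + gy \in \rH^2(X_0,\bbZ)$.
\begin{itemize}
\item $w$ is primitive of divisibility $1$, since $(w,\mu_2)=1$.
\item $(w,\gamma_2)=0$, so $w \in \mathrm{im}(i_1)$ and $m=1$.
\item $\psi(w) = \pm g(y-\mu_1)$, so $d = g$.
\end{itemize}
Nothing in your argument bounds $d$ by $3$.

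The repair is one line, and it makes the detour through $\div(\cW)$, the assumption $\cW^2\le -1$, and the strengthened bound on $g$ all unnecessary. By minimality of $m$ you have $\gcd(m,d)=1$: if a prime divided both, then $(m/p)\Theta(\cW) = (d/p)z$ would be integral. The identity $d^2z^2 = m^2\cW^2$ then forces $m^2 \mid z^2$. Hence $|z^2| \geq m^2 \geq g^2/N'^2 > \mbmconst$, because $g > \mbmconst N'$ is already arranged in the proof of Theorem~\ref{thm:dequivalence}.

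This is exactly the paper's route. Writing the image as $\frac{b}{a}\cW'$ gives $a^2 \mid \cW'^2$, hence $a^2 < \mbmconst$ by \cite[Corollary 1.6]{msyz}. That is incompatible with $g \mid aN'$, since $0 < aN' < \sqrt{\mbmconst}\,N' < g$.
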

\begin{proof}
    It is enough to show that the image of the parallel transport of $\cW$ to $X_0$ under the isometry 
    \[\psi \colon \rH^2(X_0,\bbQ) \iso \rH^2(Z_0,\bbQ)\]
    is not proportional to any class deformation equivalent to an MBM class on $Z$. \par 
    We proceed by contradiction, so suppose that 
    \[\psi \circ \rho_{X,X_0}(\cW) = \frac{b}{a}\cW'\]
    for $\cW \in \rH^2(Z_0,\bbZ)$ deformation equivalent to an MBM class on $Z$, where $a,b$ are coprime integers. By \cite[Corollary 1.6]{msyz} we know that $a^2 < \mbmconst$. Let us write 
    \[\rho_{X,X_0}(\cW) =\cW_{\lprim} + m_1\isocls_{X_0} + m_2\theta_{X_0}\]
    where $\cW_{\lprim} \in \Hlprim^2(X_0,\bbZ)$. \par 
    Since by Corollary \ref{cor:kcyclic} the class $\psi(\cW_{\lprim}) \in \Hlprim^2(Z_0,\bbQ)$ and $\psi(a\cW) \in \rH^2(Z_0,\bbZ)$, it follows that $\psi(a\cW_{\lprim}) \in \Hlprim^2(Z_0,\bbZ)$, so in particular as this isometry is $g$-cyclic it follows that $a\cW_{\lprim}$ is in the image of the map $i_1$ from the discussion preceding Proposition \ref{prop:twistedpoincarelatticemaps}, and by Proposition \ref{prop:divcondition} we know 
    \[g \mid (-\epsilon\gamma_2,a\cW_{\lprim}) = (-\epsilon \gamma_2,a\rho_{X,X_0}(\cW))\]
    where $\gamma_2 \in \Hlprim^2(X_0,\bbZ)$ as well by the explicit choice of marking on $X_0$. Since parallel transport is an isometry, it follows that 
    \[g \mid (\cD + gv, a\cW)\]
    and hence $g \mid (\cD,a\cW) = aN'$. The rest of the argument is identical to that of \cite[Proposition 2.4]{msyz}.
\end{proof}
\printbibliography
\end{document}